\newcommand{\e}{\varepsilon}
\newcommand{\tl}{\text{li}}
\newcommand{\q}{\quad}
\newtheorem{thm}{Theorem}[section]
\newtheorem{lem}[thm]{Lemma}
\newtheorem{prop}[thm]{Proposition}
\newtheorem{kor}[thm]{Corollary}
\theoremstyle{definition}
\theoremstyle{remark}
\newtheorem*{rema}{Remark}
\title{Effective estimates for some functions defined over primes}
\author{Christian Axler}
\address{Institute of Mathematics\\ Heinrich-Heine-University Düsseldorf\\
40225 Düsseldorf, Germany}
\email{christian.axler@hhu.de}
\date{\today}
\subjclass[2010]{11N56 (Primary), 11N05, 11A41 (Secondary)}
\keywords{Chebyshev’s $\vartheta$-function, prime counting function, primes in short intervals}
\begin{document}

\begin{abstract}
In this paper we give effective estimates for some classical arithmetic functions defined over prime numbers. First we find the smallest real number $x_0$ so that some inequality involving Chebyshev's $\vartheta$-function holds for every $x \geq x_0$. Then we give some new results concerning the existence of prime numbers in short intervals. Also we derive new upper and lower bounds for some functions defined over prime numbers, for instance the prime counting function $\pi(x)$, which improve current best estimates of similar shape.
% In the second part, we give explicit estimates for some integrals involving $\vartheta(x)$ and $\pi(x)$. Some of these estimates depend on the correctness of the Riemann hypothesis.
\end{abstract}

\maketitle

\section{Introduction}

First, we consider Chebyshev's $\vartheta$-function
% which is defined for every $x \geq 0$ by
\begin{displaymath}
\vartheta(x) = \sum_{p \leq x} \log p,
\end{displaymath}
where $p$ runs over all primes not exceeding $x$. Since there are infinitely many primes, we have $\vartheta(x) \to \infty$ as $x \to \infty$.
Hadamard \cite{hadamard1896} and de la Vall\'{e}e-Poussin \cite{vallee1896} independently proved a result concerning the asymptotic behavior for $\vartheta(x)$, namely
\begin{equation}
\vartheta(x) \sim x \q\q (x \to \infty), \tag{1.1} \label{1.1}
\end{equation}
which is known as the \textit{Prime Number Theorem}. In a later paper \cite{vallee1899}, where the existence of a zero-free region for the Riemann zeta function to the left of the line $\text{Re}(s) = 1$ was proved, de la Vall\'{e}e-Poussin also estimated the error term in the Prime Number Theorem by showing that
\begin{equation}
\vartheta(x) = x + O(x e^{-c_0\sqrt{\log x}}) \q\q (x \to \infty), \tag{1.2} \label{1.2}
\end{equation} 
where $c_0$ is a positive absolute constant. The currently best explicit version of this result is due to Johnston and Yang \cite[Corollary 1.2]{johnstonyang}. They found that
\begin{equation}
|\vartheta(x) - x| \leq 9.14 x (\log x)^{1.515} \exp(- 0.8274\sqrt{\log x} ) \tag{1.3} \label{1.3}
\end{equation}
for every $x \geq 2$. The work of Korobov \cite{korobov1958} and Vinogradov \cite{vinogradov1958} implies the currently asymptotically strongest error term in \eqref{1.1}, namely
\begin{equation}
\vartheta(x) = x + O \left( x \exp \left( - c_1 \log^{3/5}x (\log \log x)^{-1/5} \right) \right) \q\q (x \to \infty), \tag{1.4} \label{1.4}
\end{equation}
where $c_1$ is a positive absolute constant. An explicit version of \eqref{1.4} was recently given by Johnston and Yang \cite[Theorem 1.4]{johnstonyang}. Now, \eqref{1.2}--\eqref{1.4} each imply that for every positive integer $k$ and every positive real number $\eta_k$ there is real number $x_1 = x_1(k, \eta_k) > 1$ so that
\begin{equation}
|\vartheta(x) - x| < \frac{\eta_kx}{\log^k x} \tag{1.5} \label{1.5}
\end{equation}
for every $x \geq x_1$. In the case where $k=3$ and $\eta_3 = 0.024334$, Broadbent et al. \cite[Table 15]{kadiri} found that
\begin{equation}
|\vartheta(x) - x| < \frac{0.024334x}{\log^3x} \q\q (x \geq e^{29}). \tag{1.6} \label{1.6}
\end{equation}
In our first result, we compute the smallest positive integer $N$ so that \eqref{1.6} holds for every $x \geq N$.

\begin{prop} \label{prop101}
The inequality \eqref{1.6} holds for every $x \geq 1,757,126,630,797 = p_{64,707,865,143}$.
\end{prop}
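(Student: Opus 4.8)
The plan is to start from \eqref{1.6}, which already gives the inequality for all $x \geq e^{29}$, and to verify it directly on the bounded interval $N \leq x < e^{29}$, where $N = p_{64,707,865,143} \approx 1.757 \cdot 10^{12} < e^{29}$. On this interval one has $\vartheta(x) < x$ (this is known to hold far beyond $e^{29}$, and it is in any case visible in the data used below), so $|\vartheta(x) - x| = x - \vartheta(x)$, and it suffices to show
\[
x - \vartheta(x) < \frac{0.024334\, x}{\log^3 x} \qquad (N \leq x < e^{29}).
\]

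The next step is to reduce this to a condition at primes only. On each interval $[p_n, p_{n+1})$ the function $\vartheta$ is constant, so $x \mapsto x - \vartheta(x)$ has slope $1$ there, whereas
\[
\frac{d}{dx}\!\left(\frac{0.024334\, x}{\log^3 x}\right) = \frac{0.024334}{\log^3 x}\left(1 - \frac{3}{\log x}\right) \in (0,1)
\]
for $x > e^3$; hence $x - \vartheta(x) - 0.024334\, x/\log^3 x$ is strictly increasing on $[p_n, p_{n+1})$. Consequently the desired inequality holds throughout $[p_n, p_{n+1})$ exactly when its left limit at $p_{n+1}$ is $\leq 0$, i.e.
\[
p_{n+1} - \vartheta(p_n) \leq \frac{0.024334\, p_{n+1}}{\log^3 p_{n+1}}, \qquad\text{equivalently}\qquad \vartheta(q) \geq q - \frac{0.024334\, q}{\log^3 q} + \log q
\]
with $q = p_{n+1}$. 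Thus it remains to verify this last inequality for every prime $q$ with $N < q \leq e^{29}$; the prime interval that contains $e^{29}$ needs nothing further, since on it the difference above is increasing and is already negative at $x = e^{29}$ by \eqref{1.6}.

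I would carry out this verification from the exact behaviour of $\vartheta$ on $[N, e^{29}]$: either by sieving the primes up to $e^{29}$ and accumulating $\sum_{p \leq x}\log p$ in sufficiently high precision, or from tabulated extreme values of $x - \vartheta(x)$ over short subintervals available in the literature (e.g. in the computations behind \cite{kadiri}), bounding $x - \vartheta(x)$ on each subinterval by its recorded maximum and $0.024334\, x/\log^3 x$ from below by its value at the left endpoint. I expect this to be the main obstacle: it is routine in principle but ranges over tens of billions of primes, it demands care with numerical precision — here $x - \vartheta(x)$ is of order $\sqrt{x}$, comparable to the right-hand side, so the margins are genuinely small — and it must have enough resolution to pin down the \emph{smallest} admissible integer rather than merely some admissible value.

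Finally I would check minimality. Since $N$ is prime, for $x$ just below $N$ one has $\vartheta(x) = \vartheta(N) - \log N$, so $x - \vartheta(x) = x - \vartheta(N) + \log N \to N - \vartheta(N) + \log N$ as $x \to N^-$. The computed value of $\vartheta(N) = \vartheta(p_{64,707,865,143})$ satisfies
\[
N - \vartheta(N) + \log N > \frac{0.024334\, N}{\log^3 N},
\]
so by the monotonicity established above \eqref{1.6} fails for all $x$ in a left neighbourhood $(c, N)$ of $N$; in particular, for every integer $M < N$ it fails at some $x \in [M, N)$, so no integer smaller than $N$ has the required property. Combining this with the verification on $[N, e^{29})$ and \eqref{1.6} for $x \geq e^{29}$ completes the proof.
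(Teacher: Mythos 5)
Your argument is correct and reaches the same reduction as the paper (use the Broadbent et al.\ bound for $x \geq e^{29}$, then verify on $[N, e^{29}]$ with $N = 1{,}757{,}126{,}630{,}797$), but the verification step is carried out by a genuinely different mechanism. You work directly with $\vartheta$, reduce the inequality to a check at primes via the observation that $x - \vartheta(x) - 0.024334\,x/\log^3 x$ increases between consecutive primes (a correct and clean reduction, and essentially the same monotonicity trick the paper uses for its leftover ranges), and then propose to enumerate all primes up to $e^{29} \approx 3.93 \times 10^{12}$ and accumulate $\vartheta$ by sieving. The paper instead converts $\vartheta(x) - x$ into a statement about $\pi(x) - \mathrm{li}(x)$ via the Rosser--Schoenfeld identity \eqref{2.2}, discards the integral term using the Platt--Trudgian bound \eqref{2.3}, and then only needs the single-point values $\pi(a)$ at the endpoints of subintervals, which Walisch's \textsl{primecount} computes combinatorially in sublinear time; prime-by-prime checks are confined to two short gaps where the $\pi - \mathrm{li}$ bound \eqref{2.5} is too weak. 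The trade-off is clear: your route is more elementary and self-contained (no appeal to \eqref{2.2} or to sign information about $\pi - \mathrm{li}$), but it requires sieving roughly $7 \times 10^{10}$ primes with enough precision that the accumulated rounding error in $\sum \log p$ stays well below the margin, which you correctly flag; the paper's route is computationally far cheaper. Your additional minimality check at $x \to N^-$ is also correct and is something the paper asserts in the introduction but does not spell out in its proof.
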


Estimates for $\vartheta(x)$ of the form \eqref{1.5} can be used to specify short intervals containing at least one prime number. As an application of the Proposition \ref{prop101} and a recent result of Broadbent et al. \cite{kadiri}, we find the following result concerning the existence of prime numbers in short intervals.

\begin{thm} \label{thm102}
For every $x \geq x_0$ there is a prime number $p$ such that
\begin{displaymath}
x < p \leq x \left( 1 + \frac{k}{\log^nx} \right),
\end{displaymath}
% for every $x \geq 17,051,708$ there is a prime number $p$ such that
% for every $x \geq x_0$, 
where
\begin{center}
\begin{tabular}{|l||c|c|c|c|c|c|c|c|c|c|}
\hline
$n$\rule{0mm}{4mm}   & $ 3                              $ & $   4              $ & $      5            $ \\ \hline
$k$\rule{0mm}{4mm}   & $ 0.048668 + 8.22 \times 10^{-11}$ & $ 114.368 + 2.145 \times 10^{-10}$ & $268,820 + 5.0363 \times 10^{-7}$ \\ \hline 
$x_0$\rule{0mm}{4mm} & $ 17,051,708                     $ & $   2              $ & $      2            $ \\ \hline
\end{tabular} \ .
\end{center}
\vspace*{2mm}
\end{thm}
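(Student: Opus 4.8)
The plan is to recast the statement as an inequality for Chebyshev's $\vartheta$-function. Put $y = x(1 + k\log^{-n}x)$. Since $\vartheta(y) - \vartheta(x) = \sum_{x < p \leq y}\log p$, a prime lies in $(x, y]$ as soon as $\vartheta(y) > \vartheta(x)$ (equivalently $\pi(y) > \pi(x)$), so it suffices to establish $\vartheta(y) > \vartheta(x)$ for every $x \geq x_0$. This is done by inserting an effective bound of the type \eqref{1.5} into the difference: if $|\vartheta(t) - t| < \eta_m t/\log^m t$ holds for all $t \geq X$, then for $x \geq X$
\[
\vartheta(y) - \vartheta(x) > (y - x) - \eta_m\Bigl(\frac{y}{\log^m y} + \frac{x}{\log^m x}\Bigr),
\]
and, using $y - x = kx\log^{-n}x$ together with $\log y > \log x$, the right-hand side is positive once
\[
k > \eta_m\Bigl(\frac{y}{x}\cdot\frac{\log^n x}{\log^m y} + \log^{n-m}x\Bigr).
\]
I would study the right-hand side as a function of $x$ and control it by monotonicity. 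For $m = n$ it is bounded, and for large $x$ it tends to $2\eta_n$ from above, since $\frac{y}{x}(\frac{\log x}{\log y})^n = 1 + k\log^{-n}x + O(\log^{-n-1}x)$; this explains why the admissible $k$ has the shape $2\eta_n$ plus a small positive correction, with $2 \times 0.024334 = 0.048668$, $114.368$ and $268{,}820$ the leading terms for $n = 3, 4, 5$ respectively.

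For the execution I would, for $n = 3$, use \eqref{1.6} in the optimal form provided by Proposition \ref{prop101} ($\eta_3 = 0.024334$, valid for all $x \geq N = 1{,}757{,}126{,}630{,}797$), and, for $n = 4, 5$, an effective bound $|\vartheta(t) - t| < \eta_n t/\log^n t$ of the appropriate power with $\eta_n$ such that $2\eta_n$ sits just below the stated $k$; bounds of this kind are supplied by \cite{kadiri}. Running the displayed inequality then yields, for each $n$, that $\vartheta(y) > \vartheta(x)$ for every $x$ above an explicit height $X_n$; the small corrections $8.22\times 10^{-11}$, $2.145\times 10^{-10}$, $5.0363\times 10^{-7}$ are exactly the slack needed to absorb the discrepancy between $\log^n x$ and $\log^n y$ and the finitely many heights below $X_n$. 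The range $x_0 \leq x < X_n$ is then dealt with in two steps: its upper part is covered by the recent result of Broadbent et al.\ \cite{kadiri} on primes in short intervals, and the small residual part by direct verification — at each height $t$ one checks that the largest gap between consecutive primes is shorter than $kt/\log^n t$, the single tightest instance fixing $x_0$ (so $x_0 = 17{,}051{,}708$ for $n = 3$, and $x_0 = 2$ for $n = 4, 5$).

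The main obstacle is the $n = 3$ row, where the margin $k - 0.048668 = 8.22\times 10^{-11}$ is minuscule and so every estimate must be kept essentially lossless. One cannot replace $\log y$ by $\log x$ in the error term, and the expansion $\log(1 + k\log^{-3}x) = k\log^{-3}x - \tfrac12 k^2\log^{-6}x + \cdots$, with explicit two-sided control of the tail, has to be tracked through the inequality for $k$, which after simplification reads roughly $k\bigl(1 - \eta_3\log^{-3}x + O(\log^{-4}x)\bigr) > 2\eta_3$. Matching the analytic regime, the result of Broadbent et al., and the prime-gap data so that no $x \geq x_0$ is overlooked, while keeping both $k$ and $x_0$ as small as claimed, is the delicate part of the argument, and it is there that the sharp range furnished by Proposition \ref{prop101} is used.
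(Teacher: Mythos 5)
Your proposal matches the paper's proof in all essentials: reduce to $\vartheta(x(1+k\log^{-n}x)) > \vartheta(x)$, insert $\eta_3 = 0.024334$ from Proposition \ref{prop101} (resp.\ $\eta_4 = 57.184$ and $\eta_5 = 134{,}410$ from \cite{kadiri}, as in Lemma \ref{lem301}) so that $k$ slightly exceeds $2\eta_n$ and the positivity threshold is an explicit $e^{C_n}$, then cover the remaining range by an earlier short-interval result plus direct computation. The only cosmetic difference is in the middle ranges: the paper uses Dusart's bound \eqref{3.2} for $n=3$ and then bootstraps each subsequent row from the previous one (the $n=3$ interval is contained in the $n=4$ interval for $\log x \leq b/a$, etc.), rather than a short-interval result of Broadbent et al.
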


% For small values of $x$, the estimates in Theorem \ref{thm101} for $\vartheta(x)$ follow from Büthe \cite[Theorem 2]{buethe} and some direct computations. 
% For large values of $x$, both estimates in Theorem \ref{thm101} follow from \eqref{1.3} and some other recent results of Dusart \cite{dusart2016} which in turn 
% are based on explicit zero-free regions for the Riemann zeta-function $\zeta(s)$ due to Mossinghoff and Trudgian \cite{mossing}, an explicit zero-density 
% estimate of Ramar\'e \cite{ramare}, and a numerical calculation of Gourdon \cite{gourdon} concerning the verification of the Riemann hypothesis for the first 
% $10^{13}$ nontrivial zeros of the Riemann zeta-function $\zeta(s)$. The last computation implies that the Riemann hypothesis is true at least for every 
% non-trivial zero $\sigma + it$ with $0 < t < 2,445,999,556,030$. 

Let $\pi(x)$ denote the number of primes not exceeding $x$. Chebyshev's $\vartheta$-function and the prime counting function $\pi(x)$ are connected by the 
identity
\begin{equation}
\pi(x) = \frac{\vartheta(x)}{\log x} + \int_{2}^{x}{\frac{\vartheta(t)}{t \log^{2} t}\ \text{d}t}, \tag{1.7} \label{1.7}
% \vartheta(x) & = \pi(x) \log x - \int_{2}^{x}{\frac{\pi(t)}{t}\ \text{d}t}, \tag{1.9} \label{1.9}
\end{equation}
which holds for every $x \geq 2$ (see \cite[Theorem 4.3]{ap}). If we combine \eqref{1.3} and \eqref{1.7}, we see that
\begin{equation}
\pi(x) = \text{li}(x) + O(x e^{-c_2\sqrt{\log x}}) \q\q (x \to \infty), \tag{1.8} \label{1.8}
\end{equation} 
where $c_2$ is a positive absolute constant. Here, the \textit{integral logarithm} $\text{li}(x)$ is defined for every $x \geq 0$ as
\begin{displaymath}
\text{li}(x) = \int_0^x \frac{\text{d}t}{\log t} = \lim_{\varepsilon \to 0+} \left \{ \int_{0}^{1-\varepsilon}{\frac{\text{d}t}{\log t}} + 
\int_{1+\varepsilon}^{x}{\frac{\text{d}t}{\log t}} \right \}
\end{displaymath}
and plays an important role in this paper. The current best explicit version of \eqref{1.8} is due to Johnston and Yang \cite[Corollary 1.3]{johnstonyang}.
% They found that
% \begin{equation}
% |\pi(x) - \text{li}(x)| \leq 0.2593 \, \frac{x}{(\log x)^{3/4}} \exp \left(-\sqrt{ \frac{\log x}{6.315}} \right) \tag{1.12} \label{1.12}
% \end{equation}
% for every $x \geq 229$. 
Again, the work of Korobov \cite{korobov1958} and Vinogradov \cite{vinogradov1958} implies the current asymptotically strongest error term for the difference $\pi(x) - \text{li}(x)$, namely
\begin{equation}
\pi(x) = \text{li}(x) + O \left( x \exp \left( - c_3 (\log x)^{3/5} (\log \log x)^{-1/5} \right) \right) \q\q (x \to \infty), \tag{1.9} \label{1.9}
\end{equation}
where $c_3$ is a positive absolute constant. Ford \cite[p.\:2]{ford} has found that the constant $c_3$ in \eqref{1.9} can be chosen to be equal to $0.2098$. Johnston and Yang \cite[Theorem 1.4]{johnstonyang} used explicit zero-free regions and zero-density estimates for the Riemann zeta-function
% Cheng used his own result concerning a zero-free region for the Riemann zeta function
to show that the inequality
% \begin{equation}
% |\pi(x) - \text{li}(x)| \leq 11.88x(\log x)^{3/5} \exp \left( - \frac{1}{57} (\log x)^{3/5} (\log \log x)^{-1/5} \right) \tag{1.10} \label{1.10}
% \end{equation}
% holds for every $x > 10$ (see \cite[p.\:2]{ford}).
\begin{equation}
|\pi(x) - \text{li}(x)| \leq 0.028x(\log x)^{0.801} \exp \left( - 0.1853 (\log x)^{3/5} (\log \log x)^{-1/5} \right) \tag{1.10} \label{1.10}
\end{equation}
holds for every $x \geq 71$.
% Under the assumption that the Riemann hypothesis is true, von Koch \cite{koch1901} deduced that $\pi(x) = 
% \text{li}(x) + O(\sqrt{x} \log x)$ as $x \to \infty$. An explicit version of von Koch's result is due to Schoenfeld \cite[Corollary 1]{schoenfeld1976}. Under 
% the assumption 
% that the Riemann hypothesis is true, he found  that the inequality
% \begin{equation}
% |\pi(x) - \text{li}(x)| < \frac{1}{8\pi}\, \sqrt{x} \log x \tag{1.7} \label{1.7}
% \end{equation}
% holds for every $x \geq 2,657$.
Panaitopol \cite[p.\:55]{pana} gave another completely different asymptotic formula for the prime counting function by showing that for every positive integer $m$, one has
\begin{equation}
\pi(x) = \frac{x}{ \log x - 1 - \frac{k_1}{\log x} - \frac{k_2}{\log^2x} - \ldots - \frac{k_{m}}{\log^{m} x}} + O \left( \frac{x}{\log^{m+2}x} \right) \q\q (x \to \infty), \tag{1.11} \label{1.11}
\end{equation}
where the positive integers $k_1, \ldots, k_m$ are defined by the recurrence formula
\begin{displaymath}
k_m + 1!k_{m-1} + 2!k_{m-2} + \ldots + (m-1)!k_1 = m \cdot m!.
\end{displaymath}
% For instance, we have
% \begin{equation}
% \pi(x) = \frac{x}{ \log x - 1 - \frac{1}{\log x} - \frac{3}{\log^2x} - \frac{13}{\log^3x} - \frac{71}{\log^4x} - \frac{461}{\log^5x} - \frac{3441}{\log^6x}} +
% O \left( \frac{x}{\log^8x} \right). \tag{1.9} \label{1.9}
% \end{equation}
For instance, we have $k_1 = 1$, $k_2 = 3$, $k_3 = 13$, $k_4 = 71$, $k_5 = 461$, and $k_6 = 3441$. The computation of the prime counting function $\pi(x)$ for large values of $x$ is a difficult problem (the latest record is due to Baugh and Walisch and was $\pi(10^{28}) = 
157,589,269,275,973,410,412,739,598$). Also the asymptotic formula \eqref{1.8} (or \eqref{1.11}) is not very meaningful with regard to the computation of $\pi(x)$ for some fixed $x$. Hence we are interested in finding new effective estimates for the prime counting function $\pi(x)$ which correspond to the first terms of \eqref{1.11}. For instance, those estimates for the prime counting function are used to get effective estimates for $1/\pi(x)$ (see \cite{berkanedusart}) or the $n$th prime number (see \cite{axlernthp}). In this paper, we use Proposition \ref{prop101} to establish the following upper bound for $\pi(x)$ which corresponds to the first terms of the asymptotic formula \eqref{1.11}.

\begin{thm} \label{thm103}
For every $x \geq 48$, we have
\begin{equation}
\pi(x) < \frac{x}{\log x - 1 - \frac{1}{\log x} - \frac{3.024334}{\log^2x} - \frac{12.975666}{\log^3x} - \frac{71.048668}{\log^4x} - 
\frac{461.364417856444}{\log^5x} - \frac{4331.1}{\log^6x}}. \tag{1.12} \label{1.12}
\end{equation}
\end{thm}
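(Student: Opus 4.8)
The plan is to combine the identity~\eqref{1.7} with Proposition~\ref{prop101}, converting the estimate for $\vartheta$ into one for $\pi$, and then to match the outcome against the right-hand side of~\eqref{1.12} by expanding both sides into powers of $1/\log x$. Writing $L = \log x$, let $D(x) = L - 1 - L^{-1} - 3.024334\,L^{-2} - 12.975666\,L^{-3} - 71.048668\,L^{-4} - 461.364417856444\,L^{-5} - 4331.1\,L^{-6}$ denote the denominator in~\eqref{1.12}. Since $\text{d}D/\text{d}L = 1 + L^{-2} + 2\cdot 3.024334\,L^{-3} + \cdots > 0$, the function $D$ is strictly increasing in $L$, hence in $x$; one checks $D(47) < 0 < D(48)$, so for $x \ge 48$ we have $D(x) > 0$ and~\eqref{1.12} is equivalent to $\pi(x)\,D(x) < x$, and the threshold $48$ is essentially forced (just below it the right-hand side of~\eqref{1.12} is negative). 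I split the proof at $x_1 := p_{64{,}707{,}865{,}143} = 1{,}757{,}126{,}630{,}797$.

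Suppose first that $x \ge x_1$. Proposition~\ref{prop101} gives $\vartheta(x) < x + 0.024334\,x/\log^3 x$, and since $\vartheta(t) < t$ is known throughout a range comfortably exceeding $x_1$, this one-sided bound in fact holds for every $t \ge 2$. Inserting it into~\eqref{1.7} and using $\frac{x}{\log x} + \int_2^x \frac{\text{d}t}{\log^2 t} = \text{li}(x) + \frac{2}{\log 2} - \text{li}(2)$, one obtains
\[
\pi(x) < \text{li}(x) + \frac{2}{\log 2} - \text{li}(2) + \frac{0.024334\,x}{\log^4 x} + 0.024334\int_2^x \frac{\text{d}t}{\log^5 t}.
\]
Now I expand $\text{li}(x)$ and $\int_2^x \text{d}t/\log^5 t$ by repeated integration by parts, writing each as a finite sum $\sum_j c_j\, x/\log^j x$ plus an explicit constant and a remainder integral $\int_2^x \text{d}t/\log^m t$ that is then bounded from above; after collecting terms, the bound for $\pi(x)$ takes the shape $\frac{x}{\log x} + \frac{x}{\log^2 x} + \frac{2x}{\log^3 x} + \frac{(6+c)x}{\log^4 x} + \frac{(24+c)x}{\log^5 x} + \cdots$ with $c = 0.024334$. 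On the other side, the recurrence defining $k_1,\dots,k_6$ (so $k_1 = 1$, $k_2 = 3$, $k_3 = 13$, $k_4 = 71$, $k_5 = 461$, $k_6 = 3441$) shows that $x/D(x)$ carries the very same expansion through order $x/\log^7 x$ precisely because the coefficients of $D(x)$ equal $k_2 + c = 3.024334$, $k_3 - c = 12.975666$, $k_4 + 2c = 71.048668$ and $k_5 + c(15 - c) = 461.364417856444$; the remaining coefficient $k_6 + 890.1 = 4331.1$ is chosen so large that $x/D(x)$ overtakes the bound for $\pi(x)$ already at the level $x/\log^8 x$, with room to spare for the constant $\frac{2}{\log 2} - \text{li}(2)$, the constants produced by the integrations by parts, and the $x/\log^{\ge 9} x$ tails, for every $x \ge x_1$ (equivalently $L \ge \log x_1 = 28.19\ldots$). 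This gives $\pi(x) < x/D(x)$ for $x \ge x_1$.

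For $48 \le x < x_1$ the inequality $\pi(x) < x/D(x)$ is verified by direct computation. Between consecutive primes $\pi$ is constant, while $x/D(x)$ is continuous and, apart from a single interior minimum, increasing on $[48, x_1)$; on the initial decreasing stretch $\pi(x)$ is tiny (at most $25$) while $x/D(x)$ stays above $40$, so that range is immediate, and elsewhere it suffices to check $\pi(p) < p/D(p)$ at each prime $p$, using tabulated values of $\pi$. This is a finite, if sizeable, verification. A shortcut via $\pi(x) \le \text{li}(x)$ is \emph{not} available here: in the moderate range $x/D(x)$ actually lies slightly below $\text{li}(x)$, and the inequality holds only because $\pi(x)$ itself is comfortably below $\text{li}(x)$ there.

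The main obstacle is the quantitative core of the second paragraph: keeping track of every constant and remainder generated by the integrations by parts and by forming the product of the two $1/\log x$-expansions, and then confirming that the surplus built into the coefficient $4331.1$ genuinely dominates the accumulated lower-order error for all $x \ge x_1$. Once the requisite auxiliary estimates for $\text{li}(x)$ and for the iterated logarithmic integrals $\int_2^x \text{d}t/\log^k t$ are in place, this reduces to a finite numerical check; everything else is bookkeeping driven by~\eqref{1.7} and the Panaitopol recurrence.
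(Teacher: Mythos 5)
Your skeleton for large $x$ matches the paper's: feed Proposition~\ref{prop101} into \eqref{1.7} and compare the resulting upper bound for $\pi(x)$ with $f(x) = x/D(x)$. (Your identification of the coefficient structure $k_2+c$, $k_3-c$, $k_4+2c$, $k_5+c(15-c)$, $k_6+890.1$ with $c=0.024334$, and of $48$ as the point where $D$ turns positive, are both correct.) But the quantitative core of your second paragraph --- that the surplus $890.1$ at order $x/\log^8 x$ absorbs all constants and tails for \emph{every} $x \ge x_1 = 1{,}757{,}126{,}630{,}797$ --- is precisely what you do not carry out, and it is doubtful at the lower end of that range: at $x=x_1$ one has $x/\log^8 x \approx 4.4$, so the surplus contributes only about $3.9\times 10^3$, while the truncation error of the divergent expansion of $\operatorname{li}(x)$ at order $x/\log^9 x$ (coefficient $8! = 40320$) and the order-$x/\log^9 x$ convolution terms of $x/D(x)$ are of the same few-thousand magnitude, with the leading excesses $0.024334\,x/\log^4 x$ on both sides cancelling exactly. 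The paper does not attempt this tightrope: it runs the analytic comparison only from $x_2 = 10^{18}$, anchoring $J_{3,0.024334,x_2}$ with the exact value of $\pi(10^{18})$ and a tabulated lower bound for $\vartheta(10^{18})$, and verifies $f(x_2) - J_{3,0.024334,x_2}(x_2) \ge 2\times 10^8$ together with a derivative sign --- a single-point check rather than a term-by-term domination claim.

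The more consequential error is your middle range. Your assertion that "a shortcut via $\pi(x)\le \operatorname{li}(x)$ is not available" is false for most of the interval below $x_1$: since the expansion of $f(x)$ exceeds that of $\operatorname{li}(x)$ starting at order $x/\log^4 x$ (coefficient $6.024334$ versus $6$), one has $f(x) > \operatorname{li}(x)$ for every $x \ge 121{,}141{,}948$, and Lemma~\ref{lem402} then settles everything from $1.2\times 10^8$ up to $10^{18}$ at a stroke. You are right only for $x$ below roughly $1.2\times 10^8$ --- which is exactly the (small) range the paper checks by computer. Your plan instead brute-forces all $\sim 6.5\times 10^{10}$ primes up to $1.76\times 10^{12}$; this is salvageable in principle but is a computation three orders of magnitude heavier than necessary, resting on a mistaken claim about where $f$ and $\operatorname{li}$ cross. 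Combined with the unverified closure at $x_1$, the argument as written has a genuine gap at both ends of the split.
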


For all sufficiently large values of $x$, Theorem \ref{thm103} is a consequence of
% \eqref{1.12} and
\eqref{1.10}. On the other hand, we get the following lower bound for the $\pi(x)$ which corresponds to the first terms of \eqref{1.11}.

\begin{thm} \label{thm104}
For every $x \geq 1,751,189,194,177 = p_{64,497,259,289}$, we have
\begin{equation}
\pi(x) > \frac{x}{\log x - 1 - \frac{1}{\log x} - \frac{2.975666}{\log^2x} - \frac{13.024334}{\log^3x} - \frac{70.951332}{\log^4x} - 
\frac{460.634397856444}{\log^5x} - \frac{3444.031844143556}{\log^6x}}. \tag{1.13} \label{1.13}
\end{equation}
\end{thm}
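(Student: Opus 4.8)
The plan is to convert the desired lower bound for $\pi(x)$ into an equivalent inequality for $\vartheta(x)$ via the identity \eqref{1.7}, then invoke Proposition \ref{prop101} to control the error in $\vartheta(x)$, and finally reduce everything to a purely real-variable estimate in the variable $t = \log x$ that can be verified for all sufficiently large $x$ and checked numerically on the remaining bounded range. Concretely, write $F(x)$ for the denominator polynomial-in-$1/\log x$ appearing on the right-hand side of \eqref{1.13}; the claimed inequality $\pi(x) > x/F(x)$ is equivalent to $\pi(x) F(x) > x$, i.e. to a lower bound of the form $\pi(x) \bigl(\log x - 1 - \sum_{j=1}^{6} a_j/\log^j x\bigr) > x$ with the $a_j$ as listed. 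The first step is therefore to expand $x/F(x)$ as an asymptotic series in $1/\log x$: by construction the coefficients $a_j$ are chosen so that $x/F(x) = \mathrm{li}(x) - (\text{small correction}) + O(x/\log^8 x)$, matching Panaitopol's coefficients $k_j$ up to the explicit perturbations (e.g. $k_2 = 3$ is replaced by $2.975666 = 3 - 0.024334$, $k_3 = 13$ is replaced by $13.024334 = 13 + 0.024334$, and so on, the perturbations being tied to the constant $0.024334$ from \eqref{1.6}).

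The second step is to bring in $\vartheta(x)$. Using \eqref{1.7}, integrate by parts (or use the known expansion of $\mathrm{li}(x)$) to write
\begin{displaymath}
\pi(x) = \frac{\vartheta(x)}{\log x} + \int_2^x \frac{\vartheta(t)}{t\log^2 t}\,\mathrm{d}t,
\end{displaymath}
and substitute $\vartheta(t) = t + E(t)$ where, by Proposition \ref{prop101}, $|E(t)| < 0.024334\, t/\log^3 t$ for $t \geq p_{64{,}707{,}865{,}143} = 1{,}757{,}126{,}630{,}797$. Since the lower endpoint $1{,}751{,}189{,}194{,}177$ in Theorem \ref{thm104} is smaller than this threshold, I will split the integral at the threshold value and absorb the bounded-range contribution into an explicit constant; on the upper part the bound for $E(t)$ applies directly. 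This yields
\begin{displaymath}
\pi(x) \geq \frac{x + E(x)}{\log x} + \int_2^x \frac{\mathrm{d}t}{\log^2 t} - 0.024334 \int_{2}^{x} \frac{\mathrm{d}t}{\log^5 t} + (\text{explicit constant}),
\end{displaymath}
and then expanding each $\int_2^x \mathrm{d}t/\log^m t$ asymptotically via repeated integration by parts produces, after collecting terms, a lower bound for $\pi(x)$ of exactly the shape $x/F(x)$ plus a remainder that is negative of order $O(x/\log^8 x)$ but which, crucially, is dominated by the gap created by the deliberate choices of $a_5$ and $a_6$ (the coefficient $3444.031844143556$ in the $1/\log^6 x$ term being chosen strictly smaller than what the honest expansion would give, leaving room). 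The algebra here is the bulk of the work: one must track the first seven coefficients of three or four asymptotic expansions simultaneously and verify that the inequality between the two rational functions of $\log x$ holds.

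The third step is the reduction to a finite check. After the expansion, the inequality $\pi(x) > x/F(x)$ becomes, for $x$ beyond some effective bound $x_2$, a polynomial inequality in $u = 1/\log x$ of the form $\sum c_j u^j > 0$ with the leading relevant coefficient positive; one shows this holds for all $0 < u \le 1/\log x_2$ by a crude estimate on the tail. For the range $1{,}751{,}189{,}194{,}177 \le x \le x_2$ — which, given the size of the threshold, I expect to be genuinely finite but large — I would verify \eqref{1.13} directly using a table of values of $\pi(x)$ (or $\vartheta(x)$) at the primes in that range, exactly as the lower endpoint $p_{64{,}497{,}259{,}289}$ suggests: the bound is sharp at that specific prime, so the finite verification is really a check that no prime below the stated one violates the inequality while it is first satisfied there. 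The main obstacle is the bookkeeping in step two: getting the seven perturbed coefficients $a_1,\dots,a_7$ to line up so that the error term from Proposition \ref{prop101} is exactly absorbed with a provable (not just heuristic) sign, and making the transition from $\vartheta$-range $[p_{64{,}707{,}865{,}143},\infty)$ down to the $\pi$-range $[p_{64{,}497{,}259{,}289},\infty)$ rigorous by an explicit interval computation bridging the gap.
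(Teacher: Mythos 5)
Your plan is essentially the paper's proof: the paper packages the substitution of Proposition \ref{prop101} into the identity \eqref{1.7} as the auxiliary function $J_{3,-0.024334,x_1}(x)$ of \eqref{4.1} with $x_1 = 1{,}757{,}126{,}630{,}797$, so that the contribution of $[2,x_1]$ is exactly your ``explicit constant'' $\pi(x_1)-\vartheta(x_1)/\log x_1$, and then establishes that this lower bound exceeds the right-hand side of \eqref{1.13} by checking the value at $x_1$ and comparing derivatives for $x \geq 44.42$ --- a cleaner substitute for your coefficient-by-coefficient asymptotic expansion that also keeps the crossover point at $x_1$ itself, so the remaining finite range is only $[1{,}751{,}189{,}194{,}177,\,1{,}757{,}126{,}630{,}797]$. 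That residual range is handled by direct computation, exactly as you propose.
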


Again, for all sufficiently large values of $x$, Theorem \ref{thm104} follows directly from
% \eqref{1.12} and
\eqref{1.10}. Our next goal is to establish new explicit estimates for the functions
\begin{displaymath}
\sum_{p \leq x} \frac{1}{p} \q \text{and} \q \sum_{p \leq x} \frac{\log p}{p},
\end{displaymath}
where $p$ runs over primes not exceeding $x$, respectively. Euler \cite{euler1737} proved that the sum of the reciprocals of all prime numbers
% \begin{displaymath}
% \sum_p \frac{1}{p}
% \end{displaymath}
diverges.
%, that is
%\begin{equation}
%\sum_{p} \frac{1}{p} = \infty. \tag{5.1} \label{5.1}
%\end{equation}
Mertens \cite[p.\:52]{mertens1874} found that $\log \log x$ is the right order of magnitude for this sum
% of the reciprocals of all prime numbers $p$ not exceeding $x$
by showing 
\begin{equation}
\sum_{p \leq x} \frac{1}{p} = \log \log x + B + O \left( \frac{1}{\log x} \right). \tag{1.14} \label{1.14}
\end{equation}
Here $B$ denotes \textit{the Mertens' constant}
% (see \cite{sloane1})
and is defined by
\begin{equation}
B = \gamma + \sum_{p} \left( \log \left( 1 - \frac{1}{p} \right) + \frac{1}{p} \right) = 0.26149 \ldots , \tag{1.15} \label{1.15}
\end{equation}
where $\gamma = 0.577215\ldots$ denotes the Euler-Mascheroni constant. In Section 6, we apply Proposition \ref{prop101}
% and effective estimates for the difference $\pi(x) - \text{li}(x)$
to some identity obtained by Rosser and Schoenfeld \cite{rosserschoenfeld1962} and derive the following result which improves all other results of this form.

\begin{thm} \label{thm105}
For every $x \geq 1,757,126,630,797$, we have
\begin{equation}
\left| \sum_{p \leq x} \frac{1}{p} - \log \log x - B \right| \leq \frac{0.024334}{3\log^3 x} \left( 1 + \frac{15}{4\log x} \right). \tag{1.16} \label{1.16}
\end{equation}
\end{thm}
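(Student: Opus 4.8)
The plan is to combine Proposition~\ref{prop101} with the identity of Rosser and Schoenfeld~\cite{rosserschoenfeld1962} expressing $\sum_{p\le x}1/p$ in terms of the remainder $\vartheta(x)-x$. First I would re-derive that identity by partial summation: since $\sum_{p\le x}1/p=\sum_{p\le x}(\log p)f(p)$ with $f(t)=1/(t\log t)$, and since the smallest prime is $2$, Abel summation with $\vartheta(2^{-})=0$ gives
\[
\sum_{p\le x}\frac1p=\frac{\vartheta(x)}{x\log x}+\int_2^x\frac{\vartheta(t)(\log t+1)}{t^2\log^2 t}\,\text{d}t .
\]
Writing $\vartheta(t)=t+(\vartheta(t)-t)$ and using the elementary evaluation
\[
\int_2^x\frac{\log t+1}{t\log^2 t}\,\text{d}t=\log\log x-\log\log 2-\frac1{\log x}+\frac1{\log 2},
\]
one obtains
\[
\sum_{p\le x}\frac1p=\log\log x+\frac1{\log 2}-\log\log 2+\frac{\vartheta(x)-x}{x\log x}+\int_2^x\frac{(\vartheta(t)-t)(\log t+1)}{t^2\log^2 t}\,\text{d}t .
\]
Letting $x\to\infty$ and invoking \eqref{1.2} (which forces the boundary term to tend to $0$ and makes the improper integral over $[2,\infty)$ absolutely convergent) identifies the additive constant with $B$ from \eqref{1.15}, hence
\[
\sum_{p\le x}\frac1p-\log\log x-B=\frac{\vartheta(x)-x}{x\log x}-\int_x^\infty\frac{(\vartheta(t)-t)(\log t+1)}{t^2\log^2 t}\,\text{d}t .
\]

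The second step is to bound the right-hand side for $x\ge 1{,}757{,}126{,}630{,}797$. Since this is exactly the threshold in Proposition~\ref{prop101}, the estimate $|\vartheta(t)-t|<0.024334\,t/\log^3 t$ is available throughout the whole range of integration. Thus the boundary term is at most $0.024334/\log^4 x$ in absolute value, while
\[
\left|\int_x^\infty\frac{(\vartheta(t)-t)(\log t+1)}{t^2\log^2 t}\,\text{d}t\right|\le 0.024334\int_x^\infty\frac{\log t+1}{t\log^5 t}\,\text{d}t=0.024334\left(\frac1{3\log^3 x}+\frac1{4\log^4 x}\right),
\]
using $\int_x^\infty \text{d}t/(t\log^4 t)=1/(3\log^3 x)$ and $\int_x^\infty \text{d}t/(t\log^5 t)=1/(4\log^4 x)$. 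Adding the two contributions with the triangle inequality gives
\[
\left|\sum_{p\le x}\frac1p-\log\log x-B\right|\le\frac{0.024334}{3\log^3 x}\left(1+\frac3{4\log x}+\frac3{\log x}\right)=\frac{0.024334}{3\log^3 x}\left(1+\frac{15}{4\log x}\right),
\]
which is precisely \eqref{1.16}.

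I do not expect a genuine obstacle here: all the arithmetic depth is packaged inside Proposition~\ref{prop101}, and once the Rosser--Schoenfeld identity is in place the rest is the evaluation of the standard integrals $\int\text{d}t/(t\log^k t)$ together with one triangle inequality. The only point requiring a little care is the bookkeeping at the endpoint $t=2$ in the partial summation, so that the constant emerging in the limit $x\to\infty$ is genuinely Mertens' constant $B$ and not $B$ shifted by a leftover elementary term; checking absolute convergence of the tail integral is similarly routine and again rests on \eqref{1.2}. The reason the final factor comes out as $1+\tfrac{15}{4\log x}$ is simply the identity $\tfrac34+3=\tfrac{15}{4}$.
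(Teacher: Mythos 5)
Your proposal is correct and follows exactly the route the paper takes: the Rosser--Schoenfeld identity (6.1) combined with Proposition \ref{prop101}, with the tail integrals $\int_x^\infty \mathrm{d}t/(t\log^k t)$ evaluated explicitly to produce the factor $1+\tfrac{15}{4\log x}$. You simply supply the derivation of the identity and the arithmetic that the paper's one-line proof leaves implicit.
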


% As an application of Theorem \ref{thm}, we give some effective estimates for the products
% \begin{displaymath}
% \prod_{p \leq x} \left( 1 - \frac{1}{p} \right), \q \text{and} \q \prod_{p \leq x} \left( 1 + \frac{1}{p} \right)
% \end{displaymath}
% where $p$ runs over primes not exceeding $x$, respectively.
In 1874, Mertens \cite{mertens1874} showed that
\begin{equation}
\sum_{p \leq x} \frac{\log p}{p} = \log x + O(1). \tag{1.17} \label{1.17}
\end{equation}
% n his proof, he used an upper bound for Chebyshev's $\vartheta$-function.
Landau \cite[§55]{landau} improved \eqref{1.17} by finding
\begin{displaymath}
\sum_{p \leq x} \frac{\log p}{p} = \log x + E + O (\exp(-\sqrt[14]{\log x})),
\end{displaymath}
where $E$ is a constant % (see \cite{sloane2})
defined by
\begin{equation}
E = - \gamma - \sum_{p} \frac{\log p}{p(p-1)} = -1.3325 \ldots. \tag{1.18} \label{1.18}
\end{equation}
Similar to Theorem \ref{thm105}, we establish the following explicit estimates for $\sum_{p \leq x} \log(p)/p$ which improve \cite[Proposition 8]{axlerPrimeF}.

\begin{thm} \label{thm106}
For every $x \geq 1,757,126,630,797$, we have
\begin{displaymath}
\left| \sum_{p \leq x} \frac{\log p}{p} - \log x - E \right| \leq \frac{0.024334}{2\log^2x} \left( 1 + \frac{2}{\log x} \right).
\end{displaymath}
\end{thm}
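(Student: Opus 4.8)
The plan is to derive Theorem~\ref{thm106} from Proposition~\ref{prop101} by the same partial-summation device used for Theorem~\ref{thm105}, exploiting an exact identity relating $\sum_{p\le x}\log(p)/p$ to an integral of $\vartheta(t)$. Concretely, Rosser and Schoenfeld \cite{rosserschoenfeld1962} give an identity of the shape
\begin{displaymath}
\sum_{p \leq x} \frac{\log p}{p} = \frac{\vartheta(x)}{x} + \int_{2}^{x} \frac{\vartheta(t)}{t^2}\,\text{d}t + C,
\end{displaymath}
for a suitable constant $C$, obtained by Abel summation from $\sum_{p\le x}\log p = \vartheta(x)$. Letting $x\to\infty$ and using the Prime Number Theorem shows that the constant appearing here, once the divergent part $\int^x \text{d}t/t = \log x$ is split off, must be exactly $E$ as in \eqref{1.18}; that is,
\begin{displaymath}
\sum_{p \leq x} \frac{\log p}{p} - \log x - E = \left( \frac{\vartheta(x)}{x} - 1 \right) + \int_{x}^{\infty} \frac{\vartheta(t) - t}{t^2}\,\text{d}t.
\end{displaymath}
This reduces everything to bounding $|\vartheta(t)-t|$ for $t \geq x \geq 1{,}757{,}126{,}630{,}797$.

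First I would establish (or quote) the displayed exact identity, being careful about the contribution of the interval where $\text{li}$ and the constants are normalized; the cleanest route is to verify it by differentiation for $x$ not a prime and check the jump conditions at primes, then fix the additive constant by the asymptotic $\vartheta(x)/x \to 1$ together with Landau's result \eqref{1.18}. Second, I would invoke Proposition~\ref{prop101}: for all $t \geq 1{,}757{,}126{,}630{,}797$ we have $|\vartheta(t) - t| < 0.024334\, t/\log^3 t$. Third, I would bound the two pieces: the boundary term is immediately $\le 0.024334/\log^3 x$, and for the tail integral,
\begin{displaymath}
\left| \int_{x}^{\infty} \frac{\vartheta(t) - t}{t^2}\,\text{d}t \right| \leq 0.024334 \int_{x}^{\infty} \frac{\text{d}t}{t \log^3 t} = \frac{0.024334}{2\log^2 x}.
\end{displaymath}
Adding these, $|{\textstyle\sum_{p\le x}}\log(p)/p - \log x - E| \le \frac{0.024334}{2\log^2 x} + \frac{0.024334}{\log^3 x} = \frac{0.024334}{2\log^2 x}\bigl(1 + \frac{2}{\log x}\bigr)$, which is exactly the claimed bound.

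The main obstacle is not the analysis but the bookkeeping around the exact identity and its constant: one must make sure the version of the Rosser–Schoenfeld identity used is valid down to the chosen lower endpoint, that the constant really collapses to $E$, and that the integral $\int_x^\infty (\vartheta(t)-t)t^{-2}\,\text{d}t$ converges absolutely (which it does, since the integrand is $O(t^{-1}\log^{-3}t)$ under Proposition~\ref{prop101}, already enough for the purpose). A minor secondary point is to confirm that for $t$ in the relevant range no sharper but more cumbersome bound is needed — the single inequality \eqref{1.6} suffices because the threshold $1{,}757{,}126{,}630{,}797$ was chosen precisely so that \eqref{1.6} holds for all larger $t$. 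Everything else is a one-line integral estimate, and the same template proves Theorem~\ref{thm105} with $\int_x^\infty t^{-1}\log^{-3}t$-type integrals replaced by their counterparts carrying an extra power of $1/\log t$, producing the factor $1 + \tfrac{15}{4\log x}$ there.
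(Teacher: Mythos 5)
Your proposal is correct and follows essentially the same route as the paper: the paper simply quotes the Rosser--Schoenfeld identity $A_3(x) = \frac{\vartheta(x)-x}{x} - \int_x^{\infty}\frac{\vartheta(y)-y}{y^2}\,\mathrm{d}y$ and combines it with Proposition~\ref{prop101}, exactly as you do after re-deriving the identity by Abel summation. The only blemish is the sign of your tail integral (it should enter with a minus sign, since $\int_2^x = \int_2^\infty - \int_x^\infty$), but this is immaterial because you bound both terms in absolute value, and your two estimates $0.024334/\log^3 x$ and $0.024334/(2\log^2 x)$ sum to precisely the claimed bound.
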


\section{Proof of Proposition \ref{prop101}}

In the following proof of Proposition \ref{prop101}, we first utilize an identity investigated by Rosser and Schoenfeld \cite{rosserschoenfeld1962} to express Chebyshev's $\vartheta$-function in terms of the difference $\pi(x) - \text{li}(x)$. Then we apply Walisch's \textit{primecount} C++ code \cite{walisch} to find a lower bound for $\pi(x) - \text{li}(x)$ in a certain restricted interval.

\begin{proof}[Proof of Proposition \ref{prop101}]
By \eqref{1.6} and \cite[Corollary 11.1]{kadiri},
% , it remains to deal with the case where $??? \leq x \leq e^{29}$. First, we use \cite[Corollary 11.1]{kadiri} to see that
% \begin{displaymath}
% \vartheta(x) < x + \frac{0.024334x}{\log^3 x} \q\q (x > 1). \tag{2.10} \label{2.10}
% \end{displaymath}
% So
it suffices to check that the inequality
\begin{equation}
\vartheta(x) > x - \frac{0.024334x}{\log^3 x} \tag{2.1} \label{2.1}
\end{equation}
holds for every $x$ satisfying $1,757,126,630,797 \leq x \leq e^{29}$. Using \cite[(2.26)]{rosserschoenfeld1962} with $f(x) = \log x$, we get
\begin{equation}
\vartheta(x) = x-2 + \tl(2) \log 2 + (\pi(x) - \tl(x)) \log x - \int_2^x \frac{\pi(t) - \tl(t)}{t} \, \text{d}t \tag{2.2} \label{2.2}
\end{equation}
% Note that $\pi(x) \leq \pi_0(x) + 1/2$, where 
% $\pi_0(x)$ is given by \eqref{2.1}. We can use \eqref{2.3} and the definition of $\Delta(x)$ (see \eqref{2.4}) to get
% \begin{equation}
% \pi(x) - \tl{(x)} \leq \frac{1}{2} + f_2(x) + \frac{\sqrt{x}}{\log x} \times \Delta(x) - \frac{1}{\log x} + 
% \frac{1}{\pi} \, \arctan \frac{\pi}{\log x}. \tag{7.14} \label{7.14}
% \end{equation}
% By Lemma \ref{lem204}, the function $f_5(x)$ is strictly decreasing on $(1, \infty)$. If we combine this with $f_5(8) < -0.0047$, it turns out that $f_5(x) < 
% -0.0047$ for every $x \geq 8$. In particular, we have $f_5(x) < -0.538$ for every $x \geq 2,000$. Since $1/2 - 1/\log t + \arctan(\pi/\log t)/\pi - 0.538 < 0$ 
% for every $t \geq 2,000$, the inequality \eqref{7.14} implies that
% \begin{displaymath}
% \pi(x) - \tl{(x)} \leq - \frac{\tl{(\sqrt{x})}}{2} - \frac{\tl{(x^{1/3})}}{3} + \frac{\sqrt{x}}{\log x} \times \Delta(x)
% \end{displaymath}
% for every $x \geq 2,000$.
for every $x \geq 2$. Now we can use \cite[Corollary 1]{platttrudskewes} to see that
\begin{equation}
-2 + \tl(2) \log 2 - \int_2^x \frac{\pi(t) - \tl(t)}{t} \, \text{d}t \geq -2 + \tl(2) \log 2 - \int_2^9 \frac{\pi(t) - \tl(t)}{t} \, \text{d}t \geq 0.129 \tag{2.3} \label{2.3}
\end{equation}
for every $x$ with $9 \leq x \leq e^{29}$. Applying \eqref{2.4} to \eqref{2.2}, we get
\begin{equation}
\vartheta(x) > x + (\pi(x) - \tl(x)) \log x \tag{2.4} \label{2.4}
\end{equation}
for every $x$ so that $9 \leq x \leq e^{29}$. Now we use Walisch's \textit{primecount} C++ code \cite{walisch} to get
\begin{equation}
\pi(x) - \text{li}(x) \geq - \frac{0.024334x}{\log^4x} \tag{2.5} \label{2.5}
\end{equation}
for every $x$ with $1,760,505,892,241 \leq x \leq 2,342,911,050,819$ and every $x$ with $2,346,094,807,193 \leq x \leq 4 \times 10^{12}$. If we combine \eqref{2.5} with \eqref{2.4}, we get \eqref{2.1} for every $x$ satisfying $1,760,505,892,241 \leq x \leq 2,342,911,050,819$ and every $x$ with $2,346,094,807,193 \leq x \leq e^{29} \leq 4 \times 10^{12}$. In order to verify the required inequality \eqref{2.1} in the case where $x$ satisfies $1,757,126,630,797 \leq x < 1,760,505,892,241$, we can check with a computer that $\vartheta(p_n) > g(p_{n+1})$ for every integer $n$ such that $\pi(1,757,126,630,797) \leq n \leq \pi(1,760,505,892,241)$. Finally, a direct computer check shows that the inequality \eqref{2.1} also holds for every $x$ such that $2,342,911,050,819 \leq x \leq 2,346,094,807,193$.
\end{proof}

\begin{rema}
To find other explicit estimates for $\vartheta(x)$ in the restricted interval $[2, 10^{20}]$, one can also apply the method used by Dusart in \cite{dusart20182}. Let
\begin{displaymath}
\pi_0(x) = \lim_{\e \to 0} \frac{\pi(x-\e) + \pi(x + \e)}{2} =
\begin{cases}
\pi(x) - 1/2, &\text {if $x$ is prime,} \\
\pi(x), &\text {otherwise.} \nonumber
\end{cases}
\end{displaymath}
Riemann \cite{riemann} published the formula
\begin{equation}
\pi_0(x) = \sum_{n=1}^\infty \frac{\mu(n)}{n} \, f(x^{1/n}), \tag{2.6} \label{2.6}
\end{equation}
where $\mu(n)$ is the Möbius function, and $f(x)$ is the Riemann prime counting function
\begin{displaymath}
f(x) = \tl{(x)} - \sum_{\rho} \tl{(x^{\rho})} + \int_x^{\infty} \frac{\text{d}t}{t(t^2-1)\log t} - \log 2.
\end{displaymath}
Here the sum means $\lim_{T \to \infty} \sum_{|\rho| \leq T} \tl{(x^{\rho})}$, and the $\rho$'s are the nontrivial zeros of the Riemann zeta function.
% \begin{displaymath}
% \zeta(s) = \sum_{n=1}^\infty n^{-s} \q\q (\text{Re}(s) > 1).
% \end{displaymath}
A first proof of \eqref{2.6} was given by von Mangoldt \cite{mangoldt} in 1895. Now let
\begin{equation}
R(x) = \sum_{n=1}^\infty \frac{\mu(n)}{n} \, \tl{(x^{1/n})} = 1 + \sum_{k=1}^\infty \frac{\log^k x}{k!k \zeta(k+1)}. \tag{2.7} \label{2.7}
\end{equation}
The latter series for it is known as Gram series. Since $\log x < x$ for every real $x > 0$, this series converges for all positive $x$ by comparison with the series for $e^x$. In \cite{riesel}, Riesel and Göhl showed that the function
\begin{displaymath}
g(x) = R(x) - \frac{1}{\log x} + \frac{1}{\pi} \, \arctan \frac{\pi}{\log x}
\end{displaymath}
is a quite good approximation to $\pi_0(x)$. The difference between $g(x)$ and $\pi_0(x)$ heuristically oscillates with an amplitude of about $\sqrt{x}/\log x$. So we define
\begin{equation}
\Delta(x) = \left( \pi_0(x) - R(x) + \frac{1}{\log x} - \frac{1}{\pi} \, \arctan \frac{\pi}{\log x} \right) \frac{\log x}{\sqrt{x}}, \tag{2.8} \label{2.8}
\end{equation}
the function which represents the fluctuations of the distribution of primes. 
% Note that $\pi(x) \leq \pi_0(x) + 1/2$, where $\pi_0(x)$ is given by \eqref{2.1}.
% We can use \eqref{2.3} and the definition of $\Delta(x)$ (see \eqref{2.4}) to get
We can use \eqref{2.7} and \eqref{2.8} to get
\begin{equation}
\pi(x) - \tl{(x)} \leq \frac{1}{2} + f_2(x) + \frac{\sqrt{x}}{\log x} \times \Delta(x) - \frac{1}{\log x} + \frac{1}{\pi} \, \arctan \frac{\pi}{\log x}, \tag{2.9} \label{2.9}
\end{equation}
where
\begin{displaymath}
f_k(x) = \sum_{n=k}^\infty \frac{\mu(n)}{n} \, \text{li}(x^{1/n}).
\end{displaymath}
% Then the function $f_5$ is strictly decreasing on $(1, \infty)$, and the function $f_6$ is strictly increasing on $(1, \infty)$.
Since $\mu(4) = 0$ and $f_5(x)$ is strictly decreasing on $(1, \infty)$, the inequality \eqref{2.9} implies that
\begin{equation}
\pi(x) - \tl{(x)} \leq - \frac{\tl{(\sqrt{x})}}{2} - \frac{\tl{(x^{1/3})}}{3} + \frac{\sqrt{x}}{\log x} \times \Delta(x) \tag{2.10} \label{2.10}
\end{equation}
for every $x \geq 2,000$. Similarly, we see that
\begin{equation}
\pi(x) - \tl{(x)} \geq \sum_{n=2}^5 \frac{\mu(n)}{n} \, \tl{(x^{1/n})} + \frac{\sqrt{x}}{\log x} \times \Delta(x) \tag{2.11} \label{2.11}
\end{equation}
for every $x \geq 10,326$. Applying \eqref{2.10} and \eqref{2.11} to \eqref{2.2}, we get
\begin{displaymath}
\vartheta(x) > x + (\Delta(x)-1) \sqrt{x} - \max_{2000 \leq t \leq x} \Delta(t) \times \text{li}(\sqrt{x}) - \sqrt[3]{x} - \frac{\text{li}(\sqrt[5]{x}) \log x}{5} + c_1
\end{displaymath}
for every $x \geq 10,326$, where $c_1$ is a constant. Analogously, we see that the inequality
\begin{displaymath}
\vartheta(x) < x + (\Delta(x)-1) \sqrt{x} - \min_{10,236 \leq t \leq x} \Delta(t) \times \text{li}(\sqrt{x}) - \sqrt[3]{x} + \frac{\text{li}(\sqrt[5]{x}) \log x}{5} - \sqrt[5]{x} + c_2
\end{displaymath}
holds for every $x \geq 10,326$, where $c_2$ is a constant. Now one can use the extensive table of the minimum and maximum values of $\Delta(x)$ in \cite{kulsha} to obtain explicit estimates for $\vartheta(x)$ in the restricted interval $[2, 10^{20}]$.
\end{rema}

\begin{rema}
Under the assumption that the Riemann hypothesis is true, von Koch \cite{koch1901} deduced the asymptotic formula $\vartheta(x) = x + O(\sqrt{x} \log^2 x)$. An explicit version was given by Schoenfeld \cite[Theorem 10]{schoenfeld1976}. Under the assumption that the Riemann hypothesis is true, Schoenfeld has found that
\begin{equation}
|\vartheta(x) - x| < \frac{\sqrt{x}}{8\pi} \, \log^2 x \tag{2.12} \label{2.12}
\end{equation}
for every $x \geq 599$. Recently, Schoenfeld's result was slightly improved by Dusart \cite[Proposition 2.5]{dusart20182}. In 2016, Büthe \cite[Theorem 2]{buethe2016} investigated a method to show that the inequality \eqref{2.12} holds unconditionally for every $x$ such that $599 \leq x \leq 1.4 \times 10^{25}$. Büthe's result was improved by Platt and Trudgian \cite[Corollary 1]{plattriemann}. They proved that the inequality \eqref{2.12} holds unconditionally for every $x$ satisfying $599 \leq x \leq 2.169 \times 10^{25}$. Recently, Johnston \cite[Corollary 3.3]{johnston} extended the last result by showing that the inequality \eqref{2.12} holds unconditionally for every $x$ with $599 \leq x \leq 1.101 \times 10^{26}$.
\end{rema}

\section{Proof of Theorem \ref{thm102}}

Bertrand's postulate states that for each positive integer $n$ there is a prime number $p$ with $n < p \leq 2n$. It was proved, for instance, by Chebyshev \cite{cheby1850}. In the following, we note some improvements of Bertrand's postulate. The first result is due to
% Schoenfeld \cite[Theorem 12]{schoenfeld1976}. He discovered that for every $x \geq 2,010,759.9$ there is a prime number $p$ with $x < p < x(1 + 1/16,597)$. 
% Ramar\'{e} and Saouter \cite[Theorem 3]{rasa} proved that for every $x \geq 10,726,905,041$ there is a prime number $p$ so that $x < p \leq x( 1 + 
% 1/28,313,999)$. Further, they gave a table of sharper results which hold for large $x$, see \cite[Table 1]{rasa}. Kadiri and Lumley \cite[Table 2]{kl} 
% obtained a series of improvements. For instance, they showed that for every $x \geq e^{150}$ there is a prime number $p$ such that $x < p < x(1 + 
% 1/2,442,159,713)$. Dusart \cite[Th\'{e}or\`{e}me 1]{pd1} proved that for every $x \geq 3,275$ there exists a prime number $p$ such that $x < p \leq x(1 + 
% 1/(2 
% \log^2 x))$ and then reduced the interval himself \cite[Proposition 6.8]{dusart2010} by showing that for every $x \geq 396,738$ there is a prime number $p$ 
% satisfying $x < p \leq x(1+1/(25 \log^2 x))$.
Trudgian \cite[Corollary 2]{trudprime}. He proved that for every $x \geq 2,898,242$ there exists a prime number $p$ with
\begin{equation}
x < p \leq x\left( 1 + \frac{1}{111 \log^2 x} \right). \tag{3.1} \label{3.1}
\end{equation}
% Recently,
Dusart \cite[Corollary 5.5]{dusart20181}
% . He proved
improved Trudgian's result by showing that for every $x \geq 468,991,632$ there exists a prime number $p$ such that
\begin{equation}
x < p \leq x\left( 1 + \frac{1}{5,000 \log^2 x} \right). \tag{3.2} \label{3.2}
\end{equation}
In \cite[Theorem 4]{axlerPrimeF}, it is shown that for every $x \geq 6,034,256$ there exists a prime number $p$ such that
\begin{equation}
x < p \leq x\left( 1 + \frac{0.087}{\log^3 x} \right). \tag{3.3} \label{3.3}
\end{equation}
Further, the present author \cite[Theorem 4]{axlerPrimeF} found that for every $x > 1$ there is a prime number $p$ with
\begin{equation}
x < p \leq x\left( 1 + \frac{198.2}{\log^4 x} \right). \tag{3.4} \label{3.4}
\end{equation}
In Theorem \ref{thm102}, we give improvements of \eqref{3.3} and \eqref{3.4} by decreasing the coefficient of the term $1/\log^n x$ and on the other hand by increasing the exponent of the $\log x$ term. In order to prove the first part of this theorem, we use Proposition \ref{prop101}. For the second and third part, we need the following effective estimates for the Chebyshev $\vartheta$-function.

\begin{lem} \label{lem301}
For every $x \geq 1,091,159$, one has
\begin{equation}
|\vartheta(x) - x| \leq \frac{57.184x}{\log^4 x}, \tag{3.5} \label{3.5}
\end{equation}
and for every $x >1$, one has
\begin{equation}
|\vartheta(x) - x| \leq \frac{134,410x}{\log^5 x}. \tag{3.6} \label{3.6}
\end{equation}
\end{lem}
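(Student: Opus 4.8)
Both inequalities will be obtained by splitting the positive real axis into finitely many intervals and, on each of them, invoking an explicit estimate for $\vartheta(x)$ already recorded above, together with the elementary observation that $x/\log^{k}x$ exceeds $x/\log^{k+1}x$ by the factor $\log x$ and that $\sqrt x\,(\log^{2}x)/(8\pi)$ and $9.14\,x(\log x)^{1.515}\exp(-0.8274\sqrt{\log x})$ are, for $x$ large, each smaller than $c\,x/\log^{k}x$ for any fixed $c>0$ and any positive integer $k$; so every comparison reduces to locating a single threshold.

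For large $x$ I would use the Johnston--Yang bound \eqref{1.3}: writing $u=\sqrt{\log x}$, the inequality \eqref{3.5} follows as soon as $9.14\,u^{11.03}\le 57.184\,e^{0.8274u}$ and \eqref{3.6} as soon as $9.14\,u^{13.03}\le 134{,}410\,e^{0.8274u}$, each holding for all $u$ beyond an explicitly computable bound. For the intermediate range I would use \eqref{1.6} (and, where this lets one lower the starting point, its sharpening in Proposition \ref{prop101}), which gives $|\vartheta(x)-x|<0.024334\,x/\log^{3}x$ and hence \eqref{3.5} whenever $0.024334\log x\le 57.184$ and \eqref{3.6} whenever $0.024334\log^{2}x\le 134{,}410$; should a narrow window remain between where this comparison ceases to be strong enough and where \eqref{1.3} becomes effective, it is to be covered by the corresponding $\log^{k}$-entry of \cite[Table 15]{kadiri} or by the explicit Korobov--Vinogradov estimate of Johnston and Yang \cite[Theorem 1.4]{johnstonyang}. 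Finally, for $x$ inside the interval $599\le x\le 1.101\times10^{26}$ on which the Riemann hypothesis has been numerically verified (Johnston \cite{johnston}; cf.\ the second remark of Section 2), I would invoke Schoenfeld's bound \eqref{2.12}, $|\vartheta(x)-x|<\sqrt x\,(\log^{2}x)/(8\pi)$, which yields \eqref{3.5} as long as $\log^{6}x\le 8\pi\cdot 57.184\,\sqrt x$ and \eqref{3.6} as long as $\log^{7}x\le 8\pi\cdot 134{,}410\,\sqrt x$.

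These estimates together leave only a bounded range to be handled directly. For \eqref{3.5} it is essentially the interval from $1{,}091{,}159$ up to the point near $x\approx10^{10}$ beyond which \eqref{2.12} becomes strong enough; since $\vartheta$ increases by $\log p$ at each prime $p$ and the functions $x\mapsto x\pm 57.184\,x/\log^{4}x$ are increasing there, the two-sided bound need only be verified at primes --- in the forms $\vartheta(p_n)\le p_n(1+57.184/\log^{4}p_n)$ and $\vartheta(p_{n+1})-\log p_{n+1}\ge p_{n+1}(1-57.184/\log^{4}p_{n+1})$ for consecutive primes $p_n<p_{n+1}$ --- which is a finite computer search; to avoid summing $\log p$ over so many primes one may run it as in the proof of Proposition \ref{prop101}, feeding Walisch's \emph{primecount} values of $\pi(x)-\text{li}(x)$ \cite{walisch} and the Platt--Trudgian estimates \cite{platttrudskewes} into the identity \eqref{2.2}. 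For \eqref{3.6} over all $x>1$ the situation is easier: the comparison $\log^{7}x\le 8\pi\cdot 134{,}410\,\sqrt x$ holds throughout $599\le x\le 1.101\times10^{26}$, so \eqref{2.12} already settles that whole interval, while for $1<x<599$ one simply notes $134{,}410\,x/\log^{5}x\ge 134{,}410\,e^{5}/5^{5}>6{,}380>|\vartheta(x)-x|$.

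The main obstacle is not any single estimate but the bookkeeping between them: one must check that the coefficients $57.184$ and $134{,}410$, and the starting point $1{,}091{,}159$ of \eqref{3.5}, are chosen generously enough that the intervals on which \eqref{1.3}, \eqref{1.6}/Proposition \ref{prop101}, \eqref{2.12} and the relevant entry of \cite[Table 15]{kadiri} are each strong enough genuinely overlap, so that only a finite and computationally feasible range remains to be verified by machine. The tightest points are the window near $x\approx10^{10}$, where \eqref{2.12} is on the verge of failing for \eqref{3.5} and which --- through the direct computation --- is what pins the value $1{,}091{,}159$, and, for the purely analytic part, the large-$x$ window in which the $1/\log^{3}x$-bound has just become too weak while \eqref{1.3} has not yet become effective.
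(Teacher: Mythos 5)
Your proposal is workable, but it takes a much longer road than the paper, whose entire proof of Lemma \ref{lem301} is: the $k=4$ entry of \cite[Table 15]{kadiri} gives \eqref{3.5} for $x \geq 5\times 10^6$, a short computer check covers $1{,}091{,}159 \leq x \leq 5\times 10^6$, and the $k=5$ entry of the same table already states \eqref{3.6} for all $x>1$ verbatim (note that $57.184$ and $134{,}410$ are exactly half of the constants $114.368$ and $268{,}820$ in Theorem \ref{thm102}, i.e.\ they are lifted directly from that table). Your patchwork of \eqref{1.3}, \eqref{1.6}, \eqref{2.12} and a computation does reconstruct the result, but two quantitative points deserve attention. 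First, the ``narrow window'' you hedge about is real and not small: \eqref{1.6} yields \eqref{3.5} only for $\log x \leq 57.184/0.024334 \approx 2350$, while the Johnston--Yang bound \eqref{1.3} (with $u=\sqrt{\log x}$) only becomes strong enough near $u \approx 50$, i.e.\ $\log x \approx 2500$; so the range $e^{2350} \lesssim x \lesssim e^{2500}$ is covered by neither, and the explicit Korobov--Vinogradov bound of \cite[Theorem 1.4]{johnstonyang} is far too weak there as well. The only tool that closes this gap is precisely a large-$b$ entry of \cite[Table 15]{kadiri} --- at which point one may as well cite that table throughout, as the paper does. Second, your scheme pushes the direct verification of \eqref{3.5} from $1{,}091{,}159$ up to the point near $10^{10}$ where \eqref{2.12} takes over, a computation over roughly $4.5\times 10^{8}$ primes, versus the paper's check up to $5\times 10^6$; this is feasible (and your reduction to checking $\vartheta(p_n)\leq p_n(1+57.184/\log^4 p_n)$ and $\vartheta(p_{n+1})-\log p_{n+1}\geq p_{n+1}(1-57.184/\log^4 p_{n+1})$ at consecutive primes is correct), but it is three orders of magnitude more work. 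Your treatment of \eqref{3.6} for $1<x<599$ via the minimum $134{,}410\,e^5/5^5>6{,}380$ is fine. In short: no error of principle, but the detour through \eqref{1.3} and \eqref{2.12} neither avoids the dependence on \cite[Table 15]{kadiri} nor reduces the computation.
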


\begin{proof}
Using \cite[Table 15]{kadiri}, we see that the inequality \eqref{3.5} holds for every $x \geq 5 \times 10^6$. For smaller values of $x$, we use a computer. The inequality \eqref{3.6} has already been proven in \cite[Table 15]{kadiri}.
\end{proof}

Now we give a prove of Theorem \ref{thm102}.

\begin{proof}[Proof of Theorem \ref{thm102}]
For a better readability, we set $f_{k,n}(x) = kx/\log^nx$, $a = 0.048668 + 8.22 \times 10^{-11}$, $b = 114.368 + 2.145 \times 10^{-10}$, and $c = 268,820 + 5.0363 \times 10^{-7}$. % Further, let $f_{k,n}(x) = k/\log^nx$.
% In \cite[Table ]{kadiri}, it is proved that
% \begin{displaymath}
% | \vartheta(x) - x| \leq \frac{1.0376 \times 10^{-5} x}{\log^2 x}
% \end{displaymath}
% for every $x \geq e^{47}$. Hence
% \begin{displaymath}
% \vartheta(x+xf_{a, 2}(x)) - \vartheta(x) > \frac{x}{\log^2x} \left( 8.22 \times 10^{-11} - \frac{0.024334b}{\log^2x} \right) \geq 0
% \end{displaymath}
% for every $x \geq e^{243.329}$
Using Proposition \ref{prop101}, we get
\begin{displaymath}
\vartheta(x+f_{a, 3}(x)) - \vartheta(x) > \frac{x}{\log^3x} \left( 8.22 \times 10^{-11} - \frac{0.024334a}{\log^3x} \right) \geq 0
\end{displaymath}
for every $x \geq e^{243.3297}$, which implies that for every $x \geq e^{243.329}$ there is a prime number $p$ satisfying $x < p \leq x + ax/\log^3x$. From \eqref{3.2}, it is clear that the claim follows for every $x$ with $468,991,632 \leq x < e^{243.34}$. To deal with the cases where $17,051,887 \leq x < 468,991,632$, we check with a computer that the inequality $p_n(1 + a/\log^3 p_n) > p_{n+1}$ holds for every integer $n$ such that $\pi(17,051,887) \leq n \leq \pi(468,991,632)+1$. Finally, we notice that $\pi(x(1+ a/\log^3 x)) > \pi(x)$ for every $x$ such that $17,051,708 \leq x < 17,051,887$. 

In order to prove the second part, we use \eqref{3.5} to obtain the inequality
\begin{displaymath}
\vartheta(x+f_{b, 4}(x)) - \vartheta(x) > \frac{x}{\log^4x} \left( 2.145 \times 10^{-10} - \frac{57.184b}{\log^4x} \right) \geq 0
\end{displaymath}
for every $x \geq e^{2349.839}$. Obviously, the first part yields that there is a prime number $p$ satisfying $x < p \leq x + bx/\log^4x$ for every $17,051,708 
\leq x \leq e^{2349.963}$. Analogously to the proof of the first part, we check with a computer that for every $x$ with $2 \leq x < 17,051,708$ there is a prime $p$ so that $x < p \leq x + bx/\log^4x$.

Finally, we verify the third part. By \eqref{3.6}, we have
\begin{displaymath}
\vartheta(x+f_{c, 5}(x)) - \vartheta(x) > \frac{x}{\log^5x} \left( 5.0363 \times 10^{-7} - \frac{134410c}{\log^5x} \right) \geq 0
\end{displaymath}
for every $x \geq e^{2350.479}$. Now it suffices to observe that the second part implies the third part for every $x$ satisfying $2 \leq x \leq e^{2350.482}$. 
\end{proof}

\begin{rema}
Beginning with Hoheisel \cite{hoheisel}, many authors have found shorter intervals of the form $[x - x^{\delta}, x]$ that must contain a prime number for all sufficiently large values of $x$. The most recent result is due to Baker, Harman, and Pintz \cite{baker}. They found the value $\delta = 0.525$. Under the assumption that the Riemann hypothesis is true, much better results are known. For more details, see, for instance, Ramar\'e and Saouter \cite{ramare}, Dudek \cite{dudekRH}, Dudek, Greni\'e, and Molteni \cite{dudekal}, and Carneiro, Milinovich, and Soundararajan \cite{fourier}.
\end{rema}

\section{Proof of Theorem \ref{thm103}}

First, we note some well known estimates for the prime counting function $\pi(x)$. A classic method of finding explicit estimates for $\pi(x)$ is the following. Let $k$ be a positive integer and $\eta_k$ a positive real number. By \eqref{1.5}, there is a real number $x_1 = x_1(k, \eta_k) > 1$ so that
\begin{displaymath}
|\vartheta(x) - x| < \frac{\eta_kx}{\log^k x}
\end{displaymath}
for every $x \geq x_1$. In order to prove Theorem \ref{thm103}, we define the auxiliary function
\begin{equation}
J_{k;\eta_k;x_1}(x) = \pi(x_1) - \frac{\vartheta(x_1)}{\log x_1} + \frac{x}{\log x} + \frac{\eta_k x}{\log^{k+1} x} + \int_{x_1}^{x}{\left(
\frac{1}{\log^{2} t} + \frac{\eta_k}{\log^{k+2} t} \, \text{d}t \right)} \tag{4.1} \label{4.1}
\end{equation}
and note the following both inequalities involving the prime counting function $\pi(x)$.

\begin{lem} \label{lem401}
For every $x \geq x_1$, we have
\begin{displaymath}
J_{k;-\eta_k;x_1}(x) \leq \pi(x) \leq J_{k;\eta_k;x_1}(x).
\end{displaymath}
\end{lem}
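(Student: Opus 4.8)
The plan is to prove both inequalities simultaneously by starting from the identity \eqref{1.7} and splitting the integral at $x_1$. For every $x \geq x_1$ we have
\begin{displaymath}
\pi(x) = \frac{\vartheta(x)}{\log x} + \int_{2}^{x_1}{\frac{\vartheta(t)}{t \log^{2} t}\,\text{d}t} + \int_{x_1}^{x}{\frac{\vartheta(t)}{t \log^{2} t}\,\text{d}t}.
\end{displaymath}
Applying \eqref{1.7} at the single point $x_1$ shows that $\pi(x_1) = \vartheta(x_1)/\log x_1 + \int_{2}^{x_1}{\vartheta(t)/(t \log^{2} t)\,\text{d}t}$, so the first two terms together equal $\vartheta(x)/\log x + \pi(x_1) - \vartheta(x_1)/\log x_1 - \int_{x_1}^{x}\cdots$; more directly, subtract the $x_1$-instance of \eqref{1.7} from the $x$-instance to get
\begin{displaymath}
\pi(x) = \pi(x_1) + \frac{\vartheta(x)}{\log x} - \frac{\vartheta(x_1)}{\log x_1} + \int_{x_1}^{x}{\frac{\vartheta(t)}{t \log^{2} t}\,\text{d}t}.
\end{displaymath}

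Next I would insert the bound $|\vartheta(t) - t| < \eta_k t/\log^k t$, valid for $t \geq x_1$ by the defining property of $x_1$. For the upper bound on $\pi(x)$ this gives $\vartheta(x)/\log x \leq x/\log x + \eta_k x/\log^{k+1} x$ and, inside the integral, $\vartheta(t)/(t\log^2 t) \leq 1/\log^2 t + \eta_k/\log^{k+2} t$. Substituting these into the displayed identity yields exactly $\pi(x) \leq J_{k;\eta_k;x_1}(x)$ after recognizing the right-hand side as \eqref{4.1}. The lower bound is entirely symmetric: replacing $\eta_k$ by $-\eta_k$ throughout reverses each inequality and produces $\pi(x) \geq J_{k;-\eta_k;x_1}(x)$. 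One should also check the degenerate case $x = x_1$, where both inequalities reduce to $\pi(x_1) \leq \pi(x_1) + \int_{x_1}^{x_1}(\cdots) = \pi(x_1)$, which holds trivially.

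There is essentially no hard step here — the argument is a bookkeeping exercise in combining \eqref{1.7} with the hypothesis \eqref{1.5}. The only point requiring a little care is matching the integrand: the term $\int_{x_1}^{x}(1/\log^2 t + \eta_k/\log^{k+2} t)\,\text{d}t$ in \eqref{4.1} must be read as the integral of the whole parenthesized expression (the displayed formula \eqref{4.1} places the $\text{d}t$ slightly awkwardly, but the intended meaning is clear), and one must confirm that the constant term $\pi(x_1) - \vartheta(x_1)/\log x_1$ in $J$ is precisely what emerges from the subtraction of the two instances of \eqref{1.7}. Once that identification is made, both chains of inequalities are immediate, so I expect the write-up to be short.
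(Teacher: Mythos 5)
Your argument is correct and is exactly the one the paper intends: its proof of this lemma is the single sentence that the claim follows directly from \eqref{1.7} and \eqref{1.5}, and your write-up simply makes that explicit by subtracting the $x_1$-instance of \eqref{1.7} and inserting the $\vartheta$-bounds. (Only a cosmetic quibble: at $x=x_1$ the two sides do not reduce to equality but to $\pi(x_1)$ plus a nonnegative remainder, which is still fine.)
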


\begin{proof}
The claim follows directly form \eqref{1.7} and \eqref{1.5}.
\end{proof}

One of the first estimates for $\pi(x)$ is due to Gauss. In 1793, he computed that
\begin{equation}
\pi(x) \leq \text{li}(x) \tag{4.2} \label{4.2}
\end{equation}
holds for every $x$ with $2 \leq x \leq 3,000,000$ and conjectured that the inequality \eqref{4.2} holds for every $x \geq 2$. This conjecture was disproven by Littlewood \cite{littlewood}. More precisely, he proved that the function $\pi(x) - \text{li}(x)$ changes the sign infinitely many times. Unfortunetely, Littlewood’s proof is nonconstructive and there is still no example of $x$ such that $\pi(x) > \text{li}(x)$. 
% Under the assumption that the Riemann hypothesis is true, Skewes \cite[]{} proved that there exists a number $x_0$ with
% \begin{displaymath}
% x_0 < e^{e^{e^{79}}} < 10^{10^{10^{34}}}
% \end{displaymath}
% such that $\pi(x_0) > \text{li}(x_0)$.
Skewes \cite{skewes} proved the existence of a number $x_0$ with $x_0 < \exp(\exp(\exp(\exp(7.705))))$
% \begin{displaymath}
% x_0 < e^{e^{e^{e^{7.705}}}} < 10^{10^{10^{964}}}
% \end{displaymath}
such that $\pi(x_0) > \text{li}(x_0)$. Lehman \cite{lehman} improved this last upper bound considerably by showing that exists a number $x_0$ with $x_0 < 1.65\times 10^{1165}$ such that $\pi(x_0) > \text{li}(x_0)$. After some further improvements (see, for instance, te Riele \cite{riele}, Bays and Hudson \cite{bayshudson}, Chao and Plymen \cite{chaoplymen}, Saouter and Demichel \cite{saouterdemichel}, 
% Zegowitz \cite{zegowitz}, 
Stoll and Demichel \cite{stolldemichel} and Saouter, Trudgian, and Demichel \cite{saoutertruddemichel}), the current best upper bound was found by Platt and Trudgian \cite{platttrudskewes}. They proved that there exists a number $x_0$ with $x_0 < e^{727.951332668}$ such that $\pi(x_0) > \text{li}(x_0)$. All these upper bounds have been proved by using computer calculations of zeros of the Riemann zeta function. The first lower bound for a number $x_0$ with $\pi(x_0) > \text{li}(x_0)$ was given by the calculation of Gauss, namely $x_0 > 3,000,000$. This lower bound was improved in a series of papers. For details, see Rosser and Schoenfeld \cite{rosserschoenfeld1962}, Brent \cite{brent}, Kotnik \cite{kotnik}, Platt and Trudgian \cite{platttrudskewes}, and Stoll and Demichel \cite{stolldemichel}. For our further inverstigation, we use the following improvement.

\begin{lem}[Büthe \cite{buthe2018}] \label{lem402}
For every $x$ with $2 \leq x \leq 10^{19}$, we have $\pi(x) \leq \emph{li}(x)$.
\end{lem}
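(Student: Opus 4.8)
The plan is to follow Büthe's analytic method, which exploits the rigorous numerical verification of the Riemann hypothesis up to a large height $T_0$ together with a smoothed form of the explicit formula. First I would pass from $\pi(x)$ to the Riemann prime counting function $\Pi_0(x) = \sum_{n\ge 1} \pi_0(x^{1/n})/n$ by Möbius inversion, so that $\pi(x) \le \pi_0(x) + \tfrac12 = \Pi_0(x) - \tfrac12\Pi_0(\sqrt x) - \tfrac13\Pi_0(x^{1/3}) - \cdots + \tfrac12$, and then invoke the von Mangoldt form $\Pi_0(x) = \text{li}(x) - \sum_\rho \text{li}(x^\rho) + \int_x^\infty \frac{\text{d}t}{t(t^2-1)\log t} - \log 2$, where $\rho$ runs over the nontrivial zeros of $\zeta$. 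Collecting the main terms, the assertion $\pi(x) \le \text{li}(x)$ reduces to showing that the negative ``bias'' contribution, dominated by $-\tfrac12\text{li}(\sqrt x)$ (of size $\asymp \sqrt x/\log x$), outweighs the oscillatory sum $\sum_\rho \text{li}(x^\rho)$ for every $x$ in the substantive range (say $10^7 \le x \le 10^{19}$); the range $x < 10^7$ is already settled by Gauss's tabulation, or by a short machine check.

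Next the oscillatory sum must be controlled. For the zeros of bounded height I would invoke the verification of RH up to $T_0$ (Platt), so that every $\rho = \beta + i\gamma$ with $|\gamma| \le T_0$ has $\beta = \tfrac12$ and $\text{li}(x^\rho) = \text{li}(x^{1/2+i\gamma})$, with $|\text{li}(x^{1/2+i\gamma})|$ essentially $\sqrt x/(|\gamma|\log x)$. Since $\sum_{0 < \gamma \le T} 1/\gamma$ diverges, a naive triangle-inequality bound is hopelessly weak, so instead I would work with a \emph{smoothed} counting function: integrate $\Pi_0$ against a rapidly decaying mollifier — equivalently, insert a suitable test function into the Guinand--Weil explicit formula — chosen so that the weighted zero-sum is effectively supported on $|\gamma| \lesssim T_1$ for a height $T_1$ comfortably inside the verified range, while the tail $|\gamma| > T_0$ is bounded using the Riemann--von Mangoldt estimate $N(T+1) - N(T) = O(\log T)$. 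One then has to estimate the smoothing error, i.e. the difference between $\pi(x)$ and the smoothed quantity, in terms of a Chebyshev-type bound for $\vartheta$ (or $\psi$) on a short interval around $x$.

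Assembling these pieces yields an explicit inequality of the shape $\pi(x) - \text{li}(x) \le -c_1\,\sqrt x/\log x + (\text{explicit smoothing and tail error terms})$ valid for all $x$ up to $10^{19}$, and a direct evaluation of the constants — with the smoothing parameter optimised against the target $10^{19}$ and the height $T_0$ — confirms that the right-hand side stays negative throughout. I expect the main obstacle to be precisely this quantitative balancing act: the mollifier must be sharp enough that the \emph{known} zeros suffice, yet smooth enough that the discrepancy $\pi(x) - (\text{smoothed }\pi)(x)$ remains below the bias term; pushing the admissible $x$-range all the way to $10^{19}$ is delicate and is where one needs both careful zero-density bookkeeping and an economical choice of test function. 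The residual small range is disposed of by directly comparing $\pi(x)$ with $\text{li}(x)$.
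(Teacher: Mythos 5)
The paper does not actually prove this statement: Lemma \ref{lem402} is imported verbatim from Büthe \cite{buthe2018} (with the adjacent remark citing Dusart's extension to $10^{20}$), so there is no internal argument to compare yours against. That said, your outline does reproduce the architecture of Büthe's actual method: Möbius inversion from $\pi_0$ to $\Pi_0$, the von Mangoldt explicit formula, the negative bias $-\tfrac12\,\text{li}(\sqrt{x})$, partial verification of RH up to a height $T_0$ to handle the low-lying zeros, a band-limited test function (Logan's function in Büthe's work) so that the weighted zero sum is supported inside the verified region, and a Riemann--von Mangoldt count for the tail. Your diagnosis that the naive triangle inequality fails (the truncated zero sum is of order $\log^2 T_0$ times $\sqrt{x}/\log x$, i.e. dozens of times the bias term) and that undoing the smoothing costs a short-interval Chebyshev-type bound is also accurate.

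The gap is that everything which makes the theorem true is compressed into the sentence asserting that ``a direct evaluation of the constants confirms that the right-hand side stays negative throughout.'' For a result of this kind that sentence \emph{is} the proof. You specify no test function, no explicit height $T_0$ of the RH verification, no verification that the admissible range determined by $T_0$ actually reaches $10^{19}$ (Büthe's criterion ties the admissible $x$ to roughly $T_0^2$ up to logarithmic factors, so with $T_0$ of order $3\times 10^{10}$ the endpoint $10^{19}$ is attainable but not by a wide margin), and no quantitative bound on the smoothing error or on the contribution of possible off-line zeros above height $T_0$. As written, the proposal is a correct programme for reproving Büthe's theorem rather than a proof of it; since the lemma is used here only as an external input, the appropriate course is to cite \cite{buthe2018} (or \cite[Lemma 2.2]{dusart20182} for the range up to $10^{20}$) rather than to re-derive it.
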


\begin{rema}
Recently. Dusart \cite[Lemma 2.2]{dusart20182} showed that $\pi(x) \leq \text{li}(x)$ for every $x$ with $2 \leq x \leq 10^{20}$.
\end{rema}

Now we use Proposition \ref{prop101} and the Lemmata \ref{lem401} and \eqref{4.2} to give a proof of Theorem \ref{thm103}.

\begin{proof}[Proof of Theorem \ref{thm103}]
First, we combine Lemma \ref{lem401} with Proposition \ref{prop101} to see that
\begin{equation}
J_{3;-0.024334;x_1}(x) \leq \pi(x) \leq J_{3;0.024334;x_1}(x) \tag{4.3} \label{4.3}
\end{equation}
for every $x \geq x_1$, where $x_1 \geq 1,757,126,630,797$. Now, let $x_2 = 10^{18}$ and let $f(x)$ be given by the right-hand side of \eqref{1.12}. We consider the function $g(x) = f(x) - J_{3, 0.024334, x_2}(x)$. By \cite{deleglisetable}, we have $\vartheta(x_2) \geq 999,999,999,144,115,634$. Further, $\pi(x_2) = 24,739,954,287,740,860$ and so we compute $g(x_2) \geq 2 \times 10^8$. Since the derivative of $g$ is positive for every $x \geq x_2$, we get $f(x) - J_{3, 0.024334, x_2}(x) > 0$ for every $x \geq x_1$, and we conclude from \eqref{4.3} that the inequality \eqref{1.12} holds for every $x \geq x_1$. Comparing $f(x)$ with the integral logarithm $\text{li}(x)$, we see that $f(x) > \text{li}(x)$ for every $x \geq 121,141,948$. Now we can utilize Lemma \ref{lem402} to see that the desired inequality also holds for every $x$ such that $121,141,948 \leq x < 10^{18}$. 
% Finally, we need to consider the case where $47.41715 \leq x < 121,141,948$. For this purpose, we note that $f(x)$ is strictly increasing in the interval $[98, \infty)$. So we check with a computer that $f(p_n) > \pi(p_n)$ for every integer $n$ such that $\pi(98) \leq n \leq \pi(121,141,948) + 1$.
A computer check for smaller values of $x$ completes the proof.
\end{proof}

Under the assumption that the Riemann hypothesis is true, von Koch \cite{koch1901} deduced that $\pi(x) = \text{li}(x) + O(\sqrt{x} \log x)$ as $x \to \infty$. Actually, one can show that the asymptotic formula $\pi(x) = \text{li}(x) + O(\sqrt{x} \log x)$ as $x \to \infty$ is even a sufficient criterion for the truth of the Riemann hypothesis. An explicit version of von Koch's result is due to Schoenfeld \cite[Corollary 1]{schoenfeld1976}. Under the assumption that the Riemann hypothesis is true, Schoenfeld found that the inequality
\begin{equation}
|\pi(x) - \text{li}(x)| < \frac{1}{8\pi}\, \sqrt{x} \log x \tag{4.4} \label{4.4}
\end{equation}
holds for every $x \geq 2,657$. In 2014, Büthe \cite[p.\:2,495]{buethe2016} proved that the inequality \eqref{4.4} holds unconditionally for every $x$ such that $2,657 \leq x \leq 1.4 \times 10^{25}$. Platt and Trudgian \cite[Corollary 1]{plattriemann} improved Büthe's result by showing that the inequality \eqref{4.4} holds unconditionally for every $x$ satisfying $2,657 \leq x \leq 2.169 \times 10^{25}$. Johnston \cite[Corollary 3.3]{johnston} extended the last result by showing the following

\begin{lem}[Johnston] \label{lem403}
The inequality \eqref{4.4} holds unconditionally for every $x$ satisfying $2,657 \leq x \leq 1.101 \times 10^{26}$.
\end{lem}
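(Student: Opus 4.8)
The statement is exactly \cite[Corollary~3.3]{johnston}, and the plan is to reproduce Johnston's argument, which refines the method introduced by Büthe \cite{buethe2016} and carried further by Platt and Trudgian \cite{plattriemann}. The starting point is Schoenfeld's conditional estimate \cite{schoenfeld1976}: under the Riemann hypothesis one has $|\vartheta(x)-x|<\frac{1}{8\pi}\sqrt{x}\log^2 x$ for $x\ge 599$ (this is \eqref{2.12}), and after converting to $\pi(x)$ by partial summation against the identity \eqref{1.7} one obtains \eqref{4.4} for every $x\ge 2657$. The key point, due to Büthe, is that the conditional argument does not use the full Riemann hypothesis: working with the explicit formula for $\psi(x)$ and replacing the sharp truncation of the zero-sum at height $T_0$ by a smooth mollified truncation — Büthe uses a test function built from Logan's extremal minorant — the contribution of the zeros $\rho$ with $|\mathrm{Im}\,\rho|>T_0$ to the error term becomes negligible provided $x$ is not too large relative to $T_0$. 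Hence verifying numerically that all nontrivial zeros up to height $T_0$ lie on the critical line already makes Schoenfeld's bound unconditional on a finite, explicitly computable range $[2657,X_0(T_0)]$.

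Concretely I would: (i) fix the smoothing parameter and write the mollified explicit formula for $\psi(x)$ with the zero-sum split into a low part over $|\mathrm{Im}\,\rho|\le T_0$ and a high part over $|\mathrm{Im}\,\rho|>T_0$; (ii) estimate the low part exactly as in Schoenfeld's conditional proof, now legitimately, since the numerical verification of the Riemann hypothesis up to $T_0$ puts those zeros on the critical line; (iii) estimate the high part unconditionally using an explicit form of the Riemann--von Mangoldt zero-counting formula to control the number of zeros in dyadic blocks above $T_0$; (iv) combine the pieces, optimize the smoothing parameter as a function of $x$, pass from $\psi$ to $\vartheta$ (the prime-power terms are $O(\sqrt{x})$) and then from $\vartheta$ to $\pi$ via \eqref{1.7}, and check that the leading constant stays at $\frac{1}{8\pi}$. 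Inserting the height $T_0=3\times 10^{12}$ to which Platt and Trudgian have verified the Riemann hypothesis, together with Johnston's sharpened tail bounds, then yields the numerical value $X_0=1.101\times 10^{26}$. The remaining range $2\le x<2657$ is handled by a direct check, and the sign information $\pi(x)\le \text{li}(x)$ recorded in Lemma \ref{lem402} (together with tabulated values of $\vartheta$ and $\pi$, e.g. from \cite{deleglisetable}) makes the very small cases routine.

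The main obstacle is step (iv) and the optimization attached to it: one must bring the exponent of $\log x$ down from $2$ in the $\psi$-bound to exactly $1$ in the $\pi$-bound while keeping the leading constant equal to $\frac{1}{8\pi}$, and the size of $X_0$ is extremely sensitive to the explicit constants in the high-zero tail estimate and to the precise interplay between $T_0$ and $x$. The improvement of Johnston over Platt and Trudgian (from $2.169\times 10^{25}$ to $1.101\times 10^{26}$) comes precisely from a more economical treatment of that tail, so the real work is a careful, constant-tracking re-derivation of Büthe's inequality rather than any new conceptual ingredient.
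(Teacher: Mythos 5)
The paper gives no proof of this lemma at all --- it is quoted verbatim from \cite[Corollary 3.3]{johnston}, exactly as you identify in your opening sentence, so your appeal to that reference already coincides with the paper's entire treatment. Your subsequent outline of the B\"uthe--Platt--Trudgian--Johnston smoothing method is a fair description of what happens inside the cited work, but none of that machinery is carried out (or needed) here; the lemma rests on the citation alone, and your plan correctly defers the quantitative work to it.
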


Now we can use Theorem \ref{thm103} and the Lemmata \ref{lem402} and \eqref{lem403} to find the following weaker but more compact upper bounds for the prime counting function $\pi(x)$ of the form
\begin{displaymath}
\pi(x) < \frac{x}{\log x - a_0 - \frac{a_1}{\log x} - \cdots - \frac{a_m}{\log^mx}} \q\q (x \geq x_0), \tag{4.5} \label{4.5}
\end{displaymath}
where $m$ is a integer with $0 \leq m \leq 5$ and $a_0, \ldots, a_m$ are suitable positive real numbers.

\begin{kor} \label{kor404}
We have
\begin{displaymath}
\pi(x) < \frac{x}{\log x - a_0 - \frac{a_1}{\log x} - \frac{a_2}{\log^2x}}
\end{displaymath}
for every $x \geq x_0$, where
\begin{center}
\begin{tabular}{|l||c|c|c|c|c|c|c|c|c|c|}
\hline
$a_0$\rule{0mm}{4mm} & $1.0343$ & $1$     & $1$    \\ \hline
$a_1$\rule{0mm}{4mm} & $0$      & $1.109$ & $1$    \\ \hline
$a_2$\rule{0mm}{4mm} & $0$      & $0$     & $3.48$ \\ \hline
$x_0$\rule{0mm}{4mm} & $106,640,139,304,611$    & $81,250,795,096,339$   & $145,413,088,724,077$ \\ \hline
\end{tabular} \ ,
\end{center}
and we have
\begin{displaymath}
\pi(x) < \frac{x}{\log x - 1 - \frac{1}{\log x} - \frac{3.024334}{\log^2x} - \frac{a_3}{\log^3x} - \frac{a_4}{\log^4x} - \frac{a_5}{\log^5x}}
\end{displaymath}
for every $x \geq x_0$, where
\begin{center}
\begin{tabular}{|l||c|c|c|c|c|c|c|c|c|c|}
\hline
$a_3$\rule{0mm}{4mm} & $14.893$ & $12.975666$ & $12.975666$ \\ \hline 
$a_4$\rule{0mm}{4mm} & $0$      & $79.962$    & $71.048668$ \\ \hline
$a_5$\rule{0mm}{4mm} & $0$      & $0$         & $533.594$ \\ \hline
$x_0$\rule{0mm}{4mm} & $142,464,507,937,911$   & $22$        & $32$\\ \hline
\end{tabular} \ .
\end{center}
\end{kor}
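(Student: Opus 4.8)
The plan is to derive Corollary \ref{kor404} from Theorem \ref{thm103} by the same kind of reasoning used in the proof of that theorem, combined with Lemmata \ref{lem402} and \ref{lem403} for the lower range. The starting observation is purely algebraic: if $0 < a_i \le b_i$ for all $i$ and all denominators are positive, then
\begin{displaymath}
\log x - b_0 - \frac{b_1}{\log x} - \cdots - \frac{b_m}{\log^m x} \le \log x - a_0 - \frac{a_1}{\log x} - \cdots - \frac{a_m}{\log^m x},
\end{displaymath}
so the reciprocals reverse the inequality. For the six-term bounds in the second table, each choice of $(a_3,a_4,a_5)$ is coordinatewise $\ge$ the corresponding tail coefficients appearing in \eqref{1.12} (reading $a_3 \ge 12.975666$, and when $a_4$ is present $a_4 \ge 71.048668$, etc., with the absent coefficients in \eqref{1.12} being $461.364\ldots$ and $4331.1$ which are dwarfed by what we keep), so Theorem \ref{thm103} immediately gives the claimed bound for every $x \ge 48$ once we check it remains valid after truncation. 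Thus for the second table the bulk of the work is just verifying that the stated $x_0$ values (which for $a_4,a_5 > 0$ are tiny, namely $22$ and $32$) are where the inequality first holds, by a direct finite computer check below $10^{18}$ using Lemma \ref{lem402} and a brute-force check below $121{,}141{,}948$; for the first column of that table, where $a_4 = a_5 = 0$, we instead need $x_0 = 142{,}464{,}507{,}937{,}911$ because dropping the $1/\log^4x$ and $1/\log^5x$ terms with only $a_3 = 14.893$ to compensate is a genuine loss that only pays off for large $x$.

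For the three-term bounds in the first table the situation is different: these do not follow by mere coefficient comparison from \eqref{1.12}, because collapsing a degree-six denominator into a degree-two one changes the shape. Here I would reuse the auxiliary function machinery. Fix $m \in \{0,1,2\}$ with the given $(a_0,\dots,a_m)$, let $f(x)$ denote the proposed right-hand side, pick a convenient anchor $x_2$ (say $x_2 = 10^{15}$ or another value in the range where $\vartheta(x_2)$ and $\pi(x_2)$ are tabulated, e.g.\ via \cite{deleglisetable}), and consider $g(x) = f(x) - J_{3;0.024334;x_2}(x)$. One checks $g(x_2) > 0$ numerically using the tabulated values of $\vartheta(x_2)$ and $\pi(x_2)$, then shows $g'(x) > 0$ for all $x \ge x_2$; since by Lemma \ref{lem401} and Proposition \ref{prop101} we have $\pi(x) \le J_{3;0.024334;x_1}(x) \le J_{3;0.024334;x_2}(x)$ (the last step needs $x_1 \ge x_2 \ge 1{,}757{,}126{,}630{,}797$, or one absorbs the difference into the constant term of $J$), it follows that $\pi(x) < f(x)$ for all $x \ge x_2$. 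To push the range down to the stated $x_0$ (all three values are near $10^{14}$, well below $10^{18}$), compare $f(x)$ with $\mathrm{li}(x)$: show $f(x) > \mathrm{li}(x)$ for $x \ge x_0$ by an elementary estimate on $\log x - a_0 - a_1/\log x - a_2/\log^2 x$ versus $x/\mathrm{li}(x) \sim \log x - 1 - 1/\log x - \cdots$, and then invoke Lemma \ref{lem402} (giving $\pi(x) \le \mathrm{li}(x) < f(x)$) on $[x_0, 10^{19}]$.

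The sign of $g'$ is the technical heart for the three-term bounds. Writing $D(x)$ for the denominator of $f$ and $R(x) = \log x - 1 - 1/\log x - 3.024334/\log^2 x - \cdots - 4331.1/\log^6 x$ for the denominator implicit in $J_{3;0.024334;x_2}$ (after differentiating the integral representation \eqref{4.1}), one has $J' = \text{(bounded correction terms)} + x/D_J(x)$-type expressions; more directly, $J_{3;0.024334;x_2}'(x) = 1/\log^2 x + 0.024334/\log^5 x + \bigl(x/\log x + 0.024334 x/\log^4 x\bigr)'$, which simplifies, and $f'(x) = (D(x) - x D'(x))/D(x)^2$ with $D'(x) = 1/x - a_1/(x\log^2 x) - 2a_2/(x\log^3 x)$. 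After clearing denominators, $g'(x) > 0$ reduces to a polynomial inequality in the single variable $u = \log x$ valid for $u \ge \log x_2$; this is a finite (if tedious) verification — the kind of thing one checks by bounding the subleading terms crudely once $u$ is large enough, and the main obstacle is simply organizing this calculation cleanly for three different parameter triples rather than any conceptual difficulty. The remaining ranges in all cases are dispatched by a finite computer check over primes, exactly as in the proof of Theorem \ref{thm103}.
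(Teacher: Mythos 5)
Your treatment of the first table is essentially the paper's argument (the paper deduces the three-term bounds directly from Theorem \ref{thm103} for large $x$, then descends through a comparison with $\text{li}(x)$ plus Lemma \ref{lem402}, and finishes with \textsl{primecount}; your detour through $J_{3;0.024334;x_2}$ is a harmless variant). The problem is in your handling of the second table. You claim that because the kept coefficients are coordinatewise at least as large as those in \eqref{1.12}, with the dropped coefficients $461.364\ldots$ and $4331.1$ ``dwarfed by what we keep,'' Theorem \ref{thm103} ``immediately gives the claimed bound for every $x \ge 48$.'' That is false: dropping positive terms from the denominator strengthens the inequality, and the compensation you invoke, e.g.\ $79.962 - 71.048668 = 8.913332 \ge 461.364\ldots/\log x + 4331.1/\log^2 x$ for the second column, or $533.594 - 461.364\ldots \ge 4331.1/\log x$ for the third, only holds for $\log x \gtrsim 60$, i.e.\ $x \gtrsim 1.1\times 10^{26}$. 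So Theorem \ref{thm103} yields these bounds only for $x \ge 1.101\times 10^{26}$, and the range $[10^{19},\,1.101\times 10^{26}]$ is beyond the reach of both Lemma \ref{lem402} and any computer verification. Your detailed plan for the second table (``a direct finite computer check below $10^{18}$ using Lemma \ref{lem402}'') therefore leaves roughly seven orders of magnitude uncovered.

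The missing ingredient is Lemma \ref{lem403}, which you name in your opening sentence but never actually use. The paper bridges exactly this gap by showing that the right-hand side $f(x)$ of \eqref{4.7} satisfies $f(x) \ge \text{li}(x) + \sqrt{x}\log x/(8\pi)$ for $2.2\times 10^{16} \lesssim x \le 1.101\times 10^{26}$, so that the unconditional partial-RH bound $|\pi(x)-\text{li}(x)| < \sqrt{x}\log x/(8\pi)$ of Lemma \ref{lem403} gives $\pi(x) < f(x)$ there; only below that does it switch to the cruder comparison $f(x) > \text{li}(x)$ with Lemma \ref{lem402}, and finally to \textsl{primecount} below about $2.6\times 10^{11}$. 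Your proof as written would also need to resolve the internal inconsistency that you correctly diagnose the first column of the second table as a ``genuine loss that only pays off for large $x$'' while asserting the other two columns are immediate for $x \ge 48$ — the compensation threshold for those two columns is in fact far larger than for the first.
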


\begin{proof}
Theorem \ref{thm103} implies that the inequality
\begin{equation}
\pi(x) < \frac{x}{\log x - 1.0343} \tag{4.6} \label{4.6}
\end{equation}
holds for every $x \geq 108,943,258,198,427$. If we compare the right-hand side of \eqref{4.6} with $\text{li}(x)$, we can use Lemma \ref{lem402} to see that the required inequality \eqref{4.6} holds for every $x$ with $106,910,668,441,596 \leq x \leq 108,943,258,198,427$. Finally, we use Walisch's \textit{primecount} program \cite{walisch} to obtain that the inequality \eqref{4.6} is also valid for every $x$ satisfying $106,640,139,304,611 \leq x \leq 106,910,668,441,596$. The proof of each of the next three inequalities is similar to the proof of \eqref{4.6} and we leave the details to the reader. Next, we show that the inequality
\begin{equation}
\pi(x) < \frac{x}{\log x - 1 - \frac{1}{\log x} - \frac{3.024334}{\log^2x} - \frac{12.975666}{\log^3x} - \frac{79.962}{\log^4x}} \tag{4.7} \label{4.7}
\end{equation}
holds for every $x \geq 22$. First, we can use Theorem \ref{thm103} to obtain that the inequality \eqref{4.7} holds for every $x \geq 1.101 \times 10^{26}$. Let $f(x)$ denote the right-hand side of \eqref{4.6}. We get that $f(x) \geq \text{li}(x) + \sqrt{x}\log(x)/(8\pi)$ for every $x$ with $22,066,689,219,741,110 \leq x \leq 1.101 \times 10^{26}$. Now we can apply Lemma \ref{lem403} to see that the required inequality \eqref{4.7} also holds for every $x$ satisfying $22,066,689,219,741,110 \leq x \leq 1.101 \times 10^{26}$. A comparison with $\text{li}(x)$ shows that $f(x) > \text{li}(x)$ for every $x \geq 259,576,712,645$ and Lemma \ref{lem402} yields the desired inequality \eqref{4.7} for every $x$ with $259,576,712,645 \leq x \leq 22,066,689,219,741,110$. Finally, it suffices to apply Walisch's \textit{primecount} program \cite{walisch} to see that the inequality \eqref{4.7} also holds for every $x$ satisfying $22 \leq x \leq 259,576,712,645$. Again, the proof of the remaining inequality is similar to the proof of \eqref{4.7} and we leave the details to the reader.
\end{proof}

\begin{rema}
In Section 7, we give lots of other weaker upper bounds in the case where $m \in \{ 0,1,2\}$.
\end{rema}

Using Lemma \ref{lem301}, we get the following upper bound for the prime counting function which improves the inequality \eqref{1.12} for all sufficiently large values of $x$.

\begin{prop} \label{prop405}
For every $x \geq 29.53$, we have
\begin{displaymath}
\pi(x) < \frac{x}{\log x - 1 - \frac{1}{\log x} - \frac{3}{\log^2x} - \frac{70.935}{\log^3 x}}.
\end{displaymath}
\end{prop}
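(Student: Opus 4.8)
The plan is to follow the same template used for Theorem \ref{thm103}, but replacing the input estimate \eqref{1.6}/Proposition \ref{prop101} by the stronger bound \eqref{3.5} of Lemma \ref{lem301}, which gives $|\vartheta(x)-x|\leq 57.184\,x/\log^4 x$ for $x\geq 1{,}091{,}159$. First I would combine \eqref{1.7} with this bound, i.e.\ apply Lemma \ref{lem401} with $k=4$, $\eta_4=57.184$, and a suitable cutoff $x_1$, to obtain the double inequality $J_{4;-57.184;x_1}(x)\leq \pi(x)\leq J_{4;57.184;x_1}(x)$ for all $x\geq x_1$. Let $f(x)$ denote the right-hand side of the asserted inequality. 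The key algebraic step is to show $f(x)-J_{4;57.184;x_1}(x)>0$ for all large $x$: pick a concrete value such as $x_2=10^{18}$, use a tabulated lower bound for $\vartheta(x_2)$ (e.g.\ from \cite{deleglisetable}, as in the proof of Theorem \ref{thm103}) and the known value of $\pi(x_2)$ to check the sign of $g(x_2):=f(x_2)-J_{4;57.184;x_2}(x_2)$ numerically, and then verify that $g'(x)>0$ for every $x\geq x_2$, so that $g(x)>0$ throughout $[x_2,\infty)$. This yields the claimed bound for every $x\geq x_2$.

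Next I would descend below $x_2$. Comparing $f(x)$ with $\operatorname{li}(x)$, one finds a threshold $x_3$ (some explicit integer, to be computed, presumably a few hundred million or so) beyond which $f(x)>\operatorname{li}(x)$; then Lemma \ref{lem402} (which gives $\pi(x)\leq\operatorname{li}(x)$ for $2\leq x\leq 10^{19}$) covers the range $x_3\leq x<x_2$. If the gap between $x_3$ and the range where \eqref{3.5} is valid ($x\geq 1{,}091{,}159$) is still too large to check one prime at a time, I would insert an intermediate step using Walisch's \textit{primecount} code \cite{walisch}, exactly as in the proof of Corollary \ref{kor404}, to verify the inequality directly on $[1{,}091{,}159, x_3]$ via the known values of $\pi(x)$. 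Finally, for $29.53\leq x<1{,}091{,}159$ a direct computer check over prime powers (verifying $f(p_n)>n$ at each prime $p_n$ and monotonicity of $f$ between consecutive primes) completes the argument, taking some care near the lower endpoint $x=29.53$ since that is presumably the smallest point where $f$ is well-defined and positive (the denominator $\log x - 1 - 1/\log x - 3/\log^2 x - 70.935/\log^3 x$ must be positive there).

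The main obstacle I expect is the monotonicity verification $g'(x)>0$ for $x\geq x_2$: differentiating $f(x)$ (a quotient whose denominator is $\log x$ minus a polynomial in $1/\log x$) and subtracting $J_{4;57.184;x_2}'(x)=1/\log x+\ldots$ produces an expression that must be shown to have a definite sign. One substitutes $u=\log x$ and reduces to checking positivity of a rational function of $u$ for all $u\geq \log x_2$; this is routine but must be done carefully so that the constant $70.935$ (rather than something smaller) is exactly what the inequality forces. A secondary, purely computational obstacle is the honest bookkeeping of the transition ranges — ensuring that the \textit{primecount} range, the $\operatorname{li}(x)$ range from Lemma \ref{lem402}, and the small-$x$ machine check together cover $[29.53,x_2)$ with no gaps.
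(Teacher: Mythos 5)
Your proposal matches the paper's proof, which simply combines Lemma \ref{lem401} with \eqref{3.5} to get $\pi(x)\leq J_{4,57.184,x_1}(x)$ for $x\geq 10^{18}$ and then ``proceeds as in the proof of Theorem \ref{thm103}'' (i.e.\ the same sign check at $10^{18}$, monotonicity of the difference, comparison with $\operatorname{li}(x)$ plus Lemma \ref{lem402}, and a direct computation for small $x$). You have filled in exactly the details the paper leaves to the reader, including the correct observation that $x=29.53$ is essentially where the denominator becomes positive.
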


\begin{proof}
We combine Lemma \ref{lem401} with \eqref{3.5} to see that $\pi(x) \leq J_{4, 57.184, x_1}(x)$ for every $x \geq 10^{18}$ and proceed as in the proof of Theorem \ref{thm103}. We leave the details to the reader.
% The proof is similar to the proof of Theorem \ref{thm103} and we leave the details to the reader. We denote the right-hand side of \eqref{4.6} by $f(x)$ and let $x_1= 10^{18}$. We combine Lemma \ref{lem401} with \eqref{3.5} to see that $\pi(x) \leq J_{4, 57.184, x_1}(x)$ for every $x \geq x_1$. So it suffices to compare $f(x)$ with $J_{4, 57.184, x_1}(x)$ to get that $f(x) > \pi(x)$ for every $x \geq 10^{18}$. Since $f(x) > \text{li}(x)$ for every $x$ such that $1,098 \leq x < 10^{18}$, we can apply Lemma \ref{lem402} to obtain that \eqref{4.6} also holds for every $x$ such that $1,098 \leq x < 10^{18}$. A direct computation for smaller values of $x$ completes the proof.
%For smaller values of $x$, we conclude with a similar argument as in the proof of Theorem \ref{thm302}.
%To deal with the case where $97 \leq x \leq e^7$, we notice that $f(x)$ is strictly increasing for every $x$ such that $97 \leq x \leq e^7$ and it suffices to
% verify with a computer that $f(p_n) > \pi(p_n)$ for every positive integer $n$ such that $\pi(97) \leq n \leq \pi(e^7) + 1$. Notice that  the denominator of 
% $f(x)$ is positive for every $x \geq 41$ and that $f(x)$ is strictly decreasing for every $x$ such that $41 \leq x \leq 90$. Therefore, $f(x) \geq f(90) > 47 
% > \pi(x)$ for every $x$ such that $41 \leq x \leq 90$. It remains to consider the case where $90 \leq x < 97$. Here, we have $f(x) > 44 > \pi(x)$.
\end{proof}

Integration by parts in \eqref{1.8} implies that for every positive integer $m$, one has
\begin{equation}
\pi(x) = \frac{x}{\log x} + \frac{x}{\log^2 x} + \frac{2x}{\log^3 x} + \frac{6x}{\log^4 x} + \frac{24x}{\log^5 x} + \ldots + \frac{(m-1)! x}{\log^mx} + O \left( \frac{x}{\log^{m+1} x} \right) \tag{4.8} \label{4.8}
\end{equation}
as $x \to \infty$. In this direction, we get the following upper bound for $\pi(x)$.

\begin{prop} \label{prop406}
For every $x > 1$, we have
\begin{displaymath}
 \pi(x) < \frac{x}{\log x} + \frac{x}{\log^2 x} + \frac{2x}{\log^3 x} + \frac{6.024334x}{\log^4 x} + \frac{24.024334x}{\log^5 x} + \frac{120.12167x}{\log^6 x} + \frac{720.73002x}{\log^7 x} + \frac{6098x}{\log^8 x}.
\end{displaymath}
\end{prop}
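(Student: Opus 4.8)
Write $f(x)$ for the right-hand side of the claimed inequality; the plan is to prove it for very large $x$ via Proposition~\ref{prop101} and the auxiliary function $J$ of \eqref{4.1}, then to cover a long intermediate range by comparison with $\text{li}$, and finally to settle the rest by a bounded computation. For the first step I would combine Lemma~\ref{lem401} (with $k=3$ and $\eta_3=0.024334$) with Proposition~\ref{prop101} to get $\pi(x)\le J_{3;0.024334;x_1}(x)$ for every $x\ge x_1:=1{,}757{,}126{,}630{,}797$, and then evaluate the two integrals in \eqref{4.1} by repeatedly using $\int\frac{\text{d}t}{\log^jt}=\frac{t}{\log^jt}+j\int\frac{\text{d}t}{\log^{j+1}t}$. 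The coefficients $1,1,2,6.024334,24.024334,120.12167,720.73002$ of $f$ are exactly those produced by this expansion (note $6+0.024334$, $24+0.024334$, $120+5\cdot0.024334$, $720+30\cdot0.024334$), so every term $x/\log^jx$ with $1\le j\le 7$ cancels, leaving the identity
\begin{displaymath}
f(x)-J_{3;0.024334;x_1}(x)=\frac{6098\,x}{\log^8x}-\alpha\int_{x_1}^x\frac{\text{d}t}{\log^8t}-K,\qquad \alpha:=7!+210\cdot0.024334 ,
\end{displaymath}
where $K=J_{3;0.024334;x_1}(x_1)-\bigl(f(x_1)-\frac{6098\,x_1}{\log^8x_1}\bigr)$ is an explicit constant, bounded above using the lower bound for $\vartheta(x_1)$ from Proposition~\ref{prop101} and the value $\pi(x_1)=64{,}707{,}865{,}143$. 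The derivative of the right-hand side is $\frac{1052.89}{\log^8x}-\frac{48784}{\log^9x}$, which is nonnegative for $\log x\ge 48784/1052.89=46.33\ldots$; hence it suffices to verify the single numerical inequality $f(x)\ge J_{3;0.024334;x_1}(x)$ at $x=e^{48}$ (evaluating the definite integral to high precision), after which $\pi(x)<f(x)$ for every $x\ge e^{48}$, and in particular for every $x\ge 1.101\times10^{26}$.

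For $2<x\le 1.101\times10^{26}$ I would compare $f$ with the integral logarithm. Because $f(x)-\text{li}(x)$ retains the term $\frac{0.024334\,x}{\log^4x}$, which eventually dominates both the tail of the asymptotic expansion of $\text{li}(x)$ and the quantity $\frac{1}{8\pi}\sqrt{x}\log x$, one has $f(x)\ge\text{li}(x)+\frac{1}{8\pi}\sqrt{x}\log x$ for $10^{19}\le x\le 1.101\times10^{26}$ and $f(x)>\text{li}(x)$ for $x_\ast\le x\le 10^{19}$, where $x_\ast$ is an explicit bound; Lemma~\ref{lem403} and Lemma~\ref{lem402} then give $\pi(x)<f(x)$ on these ranges. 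The remaining bounded set of $x$ — including the narrow window where $f(x)$ slips just below $\text{li}(x)$ while still lying above $\pi(x)$ — is finite: since $f$ is increasing for $x>e^8$, it is enough to verify $n<f(p_n)$ for the finitely many primes $p_n<x_\ast$, together with an inspection of the smallest values of $x$.

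The delicate point is the very-large-$x$ step. After the integration by parts, the combination $\frac{6098\,x}{\log^8x}-\alpha\int_{x_1}^x\frac{\text{d}t}{\log^8t}$ effectively reads $\bigl(6098-\alpha(1+\tfrac{8}{\log x}+\tfrac{72}{\log^2x}+\cdots)\bigr)\frac{x}{\log^8x}$ plus lower-order corrections, and since $\alpha\approx 5045$ the series multiplying $\alpha$ must be pushed below $6098/\alpha=1.2086\ldots$; this forces $\log x$ to be taken fairly large, and one must ensure the resulting threshold still lies below $1.101\times10^{26}$, where the unconditional square-root bound of Lemma~\ref{lem403} takes over. Verifying that the margin at $x=e^{48}$ genuinely absorbs the constant $K$ and the error in truncating the tail series is the crux.
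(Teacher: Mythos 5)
Your proposal is correct and follows essentially the same route as the paper: bound $\pi(x)$ by $J_{3;0.024334;x_1}(x)$ via Lemma~\ref{lem401} and Proposition~\ref{prop101} for large $x$, compare $f$ with $\mathrm{li}$ and invoke Büthe's bound (Lemma~\ref{lem402}) in the intermediate range, and finish with a computation for small $x$. The only difference is parametric: the paper takes $x_1=10^{18}$ and verifies $f>J_{3;0.024334;x_1}$ on all of $[10^{18},\infty)$ (which requires checking near the interior minimum at $\log x\approx 46.3$ rather than only at $e^{48}$), so that Lemma~\ref{lem402} alone covers the middle range and your extra appeal to Lemma~\ref{lem403} is unnecessary.
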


\begin{proof}
We set $x_1 = 10^{18}$. Further, let $f(x)$ be the right-hand side of the required inequality. We have $f(x) > J_{3, 0.024334, x_1}(x)$ for every $x \geq x_1$. So, we can use \eqref{4.3} to get $f(x) > \pi(x)$ for every $x \geq x_1$. Since $f(x) > \text{li}(x)$ for every $x \geq 204,182,829$, we can apply Lemma \ref{lem402} to obtain $f(x) > \pi(x)$ for every $x$ such that $204,182,829 \leq x \leq x_1$. A direct computation for smaller values of $x$ completes the proof.
% It remains to deal with the cases where $1 < x \leq 204,508,755$. Since $f(x)$ is a strictly increasing function for every 
% $x \geq 45.322$, it suffices to check that $f(p_n) > \pi(p_n)$ for every integer $n$ such that $\pi(47) \leq n \leq \pi(204,508,755) + 1$. For smaller 
% values of $x$, we conclude by direct computation.
% Notice that $f(x)$ is strictly decreasing for every $x$ such that $1 < x \leq 46$. Hence $f(x) \geq f(46) > 30 > \pi(x)$ for every $x$ such that $1 < x \leq
% 46$. So, it remains to consider the case where $46 \leq x < 47$. Here, we have $f(x) > 29 > \pi(x)$.
\end{proof}

Proposition \ref{prop406} yields the following weaker but more compact upper bounds for the prime counting function $\pi(x)$.

\begin{kor} \label{kor407}
For every $x \geq x_0$, we have
\begin{displaymath}
\pi(x) < \frac{x}{\log x} + \frac{x}{\log^2 x} + \frac{(2+ \e)x}{\log^3 x},
\end{displaymath} 
where
\begin{savenotes}
\begin{center}
\begin{tabular}{|l||c|c|c|c|c|c|}
\hline
$\e$\rule{0mm}{4mm}  & $0.21$ & $0.215$ & $0.22$ & $0.225$ \\ \hline
$x_0$\rule{0mm}{4mm} & $160,930,932,942,272$ & $83,016,503,500,865$ & $43,999,690,220,699$ & $23,824,649,646,672$ \\ \hline
\hline
$\e$\rule{0mm}{4mm}  & $0.23$ & $0.24$ & $0.25$ & $0.26$ \\ \hline
$x_0$\rule{0mm}{4mm} & $13,279,102,022,111$ & $4,511,700,549,332$ & $1,615,202,653,795$ & $643,809,266,445$\\ \hline
\hline
$\e$\rule{0mm}{4mm}  & $0.265 \footnote{This inequality was already known to be true for every $x \geq 8 \times 10^{11}$ (see \cite[Proposition 3.3]{sadegh}).}$ & $ 0.27$ & $ 0.28$ & $ 0.29$ \\ \hline
$x_0$\rule{0mm}{4mm} & $406,742,886,708$ & $265,248,130,170$ & $117,997,473,286$ & $57,720,805,589$\\ \hline
\end{tabular}.
\end{center}
\end{savenotes}
\end{kor}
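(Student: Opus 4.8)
The plan is to imitate the proof of Corollary~\ref{kor404}, with Proposition~\ref{prop406} playing the role of Theorem~\ref{thm103}. Fix one of the twelve pairs $(\e,x_0)$ listed in the table and write
\[
g_\e(x) = \frac{x}{\log x} + \frac{x}{\log^2 x} + \frac{(2+\e)x}{\log^3 x}
\]
for the right-hand side of the asserted inequality; let $f(x)$ denote the eight-term upper bound of Proposition~\ref{prop406}. Subtracting,
\[
g_\e(x) - f(x) = \frac{x}{\log^3 x}\left( \e - \frac{6.024334}{\log x} - \frac{24.024334}{\log^2 x} - \frac{120.12167}{\log^3 x} - \frac{720.73002}{\log^4 x} - \frac{6098}{\log^5 x} \right),
\]
and the bracketed factor, being increasing in $\log x$, is positive as soon as $\log x$ exceeds an explicit bound; this produces an effective $X_1 = X_1(\e)$ with $g_\e(x) \geq f(x)$ for all $x \geq X_1(\e)$, whence $\pi(x) < g_\e(x)$ for all $x \geq X_1(\e)$ by Proposition~\ref{prop406}. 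A short numerical check shows $X_1(\e) \leq 10^{19}$ for every $\e$ in the list.

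For the range below $X_1(\e)$ I would argue exactly as in Corollary~\ref{kor404}. Comparing $g_\e$ with the logarithmic integral yields an explicit $X_2 = X_2(\e)$ with $g_\e(x) > \text{li}(x)$ for all $x \geq X_2(\e)$, and since Lemma~\ref{lem402} gives $\pi(x) \leq \text{li}(x)$ for $2 \leq x \leq 10^{19}$, the inequality $\pi(x) < g_\e(x)$ follows on all of $[X_2(\e),X_1(\e)]$. It then remains to treat the finitely many $x$ with $x_0 \leq x < \min\{X_1(\e),X_2(\e)\}$, and for these I would call Walisch's \emph{primecount} program~\cite{walisch} to evaluate $\pi$ at the primes $p_n$ in that interval (equivalently, to check $n < g_\e(p_n)$ for the relevant $n=\pi(p_n)$), thereby confirming $\pi(x) < g_\e(x)$ down to $x_0$ as well as the failure of the inequality just below $x_0$, which pins $x_0$ down as the least admissible bound; in the case $\e = 0.265$ this also recovers and sharpens the range $x \geq 8\times 10^{11}$ of \cite{sadegh}. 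At each crossover one uses, just as in the proof of Theorem~\ref{thm103}, that a difference such as $g_\e(x) - f(x)$ or $g_\e(x) - \text{li}(x)$, once positive, remains positive for larger $x$; this is a routine sign check on the derivative.

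The only real obstacle is computational: one must run the \emph{primecount} evaluations near each of the twelve thresholds and compare $\pi(x_0)$ — together with the values of $\pi$ at the surrounding primes — against the rational number $g_\e(x_0)$. To keep that work light one chooses $X_2(\e)$ as small as the $\text{li}$-comparison permits before handing over to direct computation. Since every step is routine, I would, as the author does for Corollary~\ref{kor404}, record the resulting data in the table and leave the numerical verification to the reader.
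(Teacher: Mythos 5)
Your proposal is correct and follows essentially the same route as the paper: deduce the bound from Proposition \ref{prop406} for large $x$, bridge the middle range by comparing with $\operatorname{li}(x)$ via Lemma \ref{lem402}, and finish with a direct computation down to $x_0$. The explicit subtraction $g_\e(x)-f(x)$ you write out is exactly the content of the paper's appeal to Proposition \ref{prop406}, so there is no substantive difference.
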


\begin{proof}
Let $x_0 = 160,930,932,942,272$ and $f(x) = x/\log x + x/\log^2 x + 2.21x/\log^3x$. Proposition \ref{prop406} implies that $\pi(x) < f(x)$ for every $x \geq 180,250,881,352,396$. If we compare $f(x)$ with the integral logarithm $\text{li}(x)$, we get by Lemma \ref{lem402} that $\pi(x) < f(x)$ for every $x \geq 162,791,795,110,834$. Next, we use a computer to verify the inequality $\pi(x) < f(x)$ for every $x$ with $x_0 \leq x \leq 162,791,795,110,834$.
% Denoting the right-hand side of the desired inequality by $f(x)$, we check that $f(p_n) \geq \pi(p_n)$ for every integer $n$ satisfying $\pi(???) \leq n \leq \pi(6 \times 10^{11}) + 1$. Since $f$ is an increasing function for every $x \geq ???$, we get $f(x) > \pi(x)$ for every $x$ such that $??? \leq x < 6 \times 10^{11}$.
The remaining inequalities can be proved in the same way.
\end{proof}

%In the proof of Proposition \ref{prop306} we use Theorem \ref{thm204}. Using Proposition \ref{prop205} instead of Theorem \ref{thm204}, we obtain the 
% following result, which improves Proposition \ref{prop306} for all sufficiently large values of $x$.

%\begin{prop} \label{prop308}
%For every $x\geq 2$, we have
%\begin{equation}
%\pi(x) < \frac{x}{\log x} + \frac{x}{\log^2 x} + \frac{2x}{\log^3 x} + \frac{6x}{\log^4 x} + \frac{133x}{\log^5 x}. \tag{3.13} \label{3.13}
%\end{equation}
%\end{prop}

%\begin{proof}
%We denote the right-hand side of \eqref{3.13} by $f(x)$. Let $k=4$ and $x_1 = 10^{14}$. A comparison of $f(x)$ with $J_{4, 100, x_1}(x)$ shows that $f(x) > 
% J_{4, 100, x_1}(x)$ for every $x \geq x_1$. Together with \eqref{3.3} and Proposition \ref{prop205}, we get $f(x) > \pi(x)$ for every $x \geq x_1$. Next, we 
% verify that the inequality \eqref{3.13} holds for every $2 \leq x \leq 10^{14}$ by comparing $f(x)$ with the logarithmic integral $\text{li}(x)$. We have
%\begin{displaymath}
%(f'(x_1) -  \text{li}'(x)) \log^6x = 15(7\log x - 43).
%\end{displaymath}
%Since $f(e^{43/7}) - \text{li}(e^{43/7}) > 0$, we have $f(x) > \text{li}(x)$ for every $x \geq 2$. Now we use \eqref{3.7} to obtain that the required 
% inequality holds for $2 \leq x \leq 10^{14}$ as well.
%\end{proof}

\section{Proof of Theorem \ref{thm104}}

In order to give a proof of Theorem \ref{thm104}, we use \eqref{4.3} and a numerical calculation that verifies the desired inequality for smaller values of $x$.

% In this subsection, we give new lower bounds for the prime counting function, which improve the currently best known lower bound given in \cite[Theorem
% 1.4]{axler2016}, namely
% \begin{displaymath}
% \pi(x) > \frac{x}{\log x - 1 - \frac{1}{\log x} - \frac{2.65}{\log^2x} - \frac{13.35}{\log^3x} - \frac{70.3}{\log^4x} - \frac{455.6275}{\log^5x} - 
% \frac{3404.4225}{\log^6x}}
% \end{displaymath}
% for every $x \geq 1\,332\,479\,531$.
% 
% \begin{thm} \label{thm308}
% For every $x \geq 19\,033\,744\,403$, we have
% \begin{equation}
% \pi(x) > \frac{x}{\log x - 1 - \frac{1}{\log x} - \frac{2.85}{\log^2x} - \frac{13.15}{\log^3x} - \frac{70.7}{\log^4x} - \frac{458.7275}{\log^5x} - 
% \frac{3428.7225}{\log^6x}}. \tag{3.10} \label{3.10}
% \end{equation}
% \end{thm}

\begin{proof}[Proof of Theorem \ref{thm104}]
Let $x_1 = 1,757,126,630,797$. Further, let $g(x)$ be the right-hand side of \eqref{1.13}.
% We have $\pi(x_1) = 64,707,865,143$ and $\vartheta(x_1) \leq 1,757,124,723,098$. So w
We can compute that $J_{3, -0.024334, x_1}(x_1) - g(x_1) > 6 \times 10^3$. In addition we have $J_{3, -0.024334, x_1}'(x) > g'(x)$ for every $x \geq 44.42$. Therefore, we get $J_{3, -0.024334, x_1}(x) > g(x)$ for every $x \geq x_1$. Using \eqref{4.3}, we get the required inequality for every $x \geq x_1$. For smaller values of $x$ we use a computer.
\end{proof}

\begin{rema}
Let $x_1 = 1,751,189,194,177$. Then the inequality \eqref{1.13} does not hold for $x = x_1 - 0.1$.
\end{rema}

\begin{rema}
Theorem \ref{thm104} improves the lower bound for $\pi(x)$ obtained in \cite[Theorem 3]{axlerPrimeF}.
\end{rema}

In the next corollary, we establish some weaker lower bounds for the prime counting function.

\begin{kor} \label{kor501}
We have
\begin{displaymath}
\pi(x) > \frac{x}{\log x - 1 - \frac{1}{\log x} - \frac{a_2}{\log^2x} - \frac{a_3}{\log^3x} - \frac{a_4}{\log^4x} - \frac{a_5}{\log^5x}}
\end{displaymath}
for every $x \geq x_0$, where
\begin{center}
\begin{tabular}{|l||c|c|c|c|c|c|c|c|c|c|}
\hline
$a_2$\rule{0mm}{4mm} & $        2.975666$       & $        2.975666$ & $      2.975666$ & $  2.975666$ \\ \hline
$a_3$\rule{0mm}{4mm} & $       13.024334$       & $       13.024334$ & $     13.024334$ & $         0$ \\ \hline 
$a_4$\rule{0mm}{4mm} & $       70.951332$       & $       70.951332$ & $             0$ & $         0$ \\ \hline
$a_5$\rule{0mm}{4mm} & $      460.634397856444$ & $               0$ & $             0$ & $         0$ \\ \hline
$x_0$\rule{0mm}{4mm} & $ 1,035,745,443,241$     & $ 153,887,581,621$ & $ 7,713,187,213$ & $54,941,209$ \\ \hline
\end{tabular} \ .
\end{center}
\end{kor}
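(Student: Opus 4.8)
The plan is to establish each of the four lower bounds in Corollary~\ref{kor501} by the same strategy that was used for the upper bounds in Corollary~\ref{kor404}, namely by comparing the right-hand side with the function $J_{3;-0.024334;x_1}(x)$ from \eqref{4.1}, then bridging the remaining range with Lemma~\ref{lem402} and a direct numerical check. I will carry out the argument in full for the strongest inequality (the first column, with $a_2 = 2.975666$, $a_3 = 13.024334$, $a_4 = 70.951332$, $a_5 = 460.634397856444$ and $x_0 = 1{,}035{,}745{,}443{,}241$) and then remark that the other three are entirely analogous, leaving the details to the reader.

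First I would set $x_1 = 1{,}757{,}126{,}630{,}797$ and let $g(x)$ denote the right-hand side of the claimed inequality. Exactly as in the proof of Theorem~\ref{thm104}, I would use Proposition~\ref{prop101} together with Lemma~\ref{lem401} to obtain \eqref{4.3}, i.e.\ $J_{3;-0.024334;x_1}(x) \leq \pi(x)$ for every $x \geq x_1$. It then suffices to show $g(x) \leq J_{3;-0.024334;x_1}(x)$ for all $x \geq x_1$. For this I would check that the difference $J_{3;-0.024334;x_1}(x) - g(x)$ is positive at $x = x_1$ and that its derivative is positive for all $x$ beyond some explicit threshold below $x_1$ (a routine estimate, since after differentiating under the integral sign in \eqref{4.1} the inequality $J' > g'$ reduces to comparing two rational functions in $\log x$). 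This yields the inequality for all $x \geq x_1$.

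Next I would descend below $x_1$. Comparing $g(x)$ with $\mathrm{li}(x)$, one finds an explicit bound $x_3$ such that $g(x) < \mathrm{li}(x)$ for all $x \geq x_3$; since Littlewood's theorem prevents $\mathrm{li}(x)$ from being a lower bound for $\pi(x)$ in general, I instead need the reverse comparison, namely that $g(x)$ lies below $\pi(x)$. Here I would compare $g(x)$ with the lower endpoint of the interval $[\,\mathrm{li}(x) - \sqrt{x}\log x/(8\pi),\ \mathrm{li}(x)\,]$ and invoke Lemma~\ref{lem402} (which gives $\pi(x) \leq \mathrm{li}(x)$ for $2 \leq x \leq 10^{19}$) — more precisely, since we seek a lower bound, what is really needed is that in the relevant range $\pi(x)$ exceeds $g(x)$, which for $x \leq 10^{19}$ can be secured directly from Walisch's \emph{primecount} data \cite{walisch} together with the explicit value $g(x)$ at finitely many sampled points, using monotonicity of $\pi$ between consecutive primes. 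Finally, a direct computer check handles all $x$ with $x_0 \leq x \leq$ (the boundary of the \emph{primecount}-verified range), and monotonicity of $g$ versus the primes $p_n$ handles the last small interval, completing the proof of the first inequality; the other three columns follow by the same three-step scheme with the correspondingly larger values of $x_0$.

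The main obstacle is not conceptual but computational: one must verify $\pi(x) > g(x)$ over the finite range between $x_0 \approx 10^{12}$ and the point where the $J$-comparison takes over, which requires either a tabulation of $\pi$ at a fine enough mesh of values (supplied by \emph{primecount}) or an argument via consecutive primes $\vartheta(p_n)$ versus $g(p_{n+1})$ as in the proof of Proposition~\ref{prop101}. Getting the threshold $x_0$ exactly right — so that the inequality first fails just below it — is the delicate part, and it is precisely this that forces the differing values of $x_0$ in the four columns; everywhere else the estimates are slack and the inequalities follow comfortably.
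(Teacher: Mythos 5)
Your overall two-step skeleton (analytic comparison for large $x$, computation below) does lead to a valid proof, but you take a much longer route than the paper and you miss the one-line observation that makes this corollary a corollary. Each displayed inequality here is obtained from \eqref{1.13} by deleting or shrinking nonnegative terms in the denominator; since the denominator is positive in the relevant range, enlarging it only decreases the fraction, so every column follows \emph{immediately} from Theorem \ref{thm104} for all $x \geq 1{,}751{,}189{,}194{,}177$. The paper's proof is exactly that sentence plus a computer verification on $[x_0,\,1{,}751{,}189{,}194{,}177]$. Your plan instead re-runs the entire $J_{3;-0.024334;x_1}$ comparison of Theorem \ref{thm104} separately for each of the four right-hand sides, which is sound but redundant.

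The one step in your write-up that would actually fail as stated is the attempt to bridge the range below $x_1$ via Lemma \ref{lem402}: that lemma gives $\pi(x) \leq \mathrm{li}(x)$, which is an \emph{upper} bound on $\pi$ and therefore cannot certify a \emph{lower} bound $\pi(x) > g(x)$ no matter how $g$ compares with $\mathrm{li}$. (The li-comparison trick works in Corollary \ref{kor404} and Theorem \ref{thm103} precisely because those are upper bounds.) You half-notice this — a usable substitute would be the two-sided bound of Lemma \ref{lem403}, i.e.\ $\pi(x) > \mathrm{li}(x) - \sqrt{x}\log x/(8\pi)$ on $[2657,\,1.101\times 10^{26}]$, not Lemma \ref{lem402} — and you then retreat to a direct \emph{primecount} verification over $[x_0, x_1]$, which is what the paper does anyway and is feasible by checking $n > g(p_{n+1})$ over consecutive primes. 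So: correct destination, but replace the Lemma \ref{lem402} step by either Lemma \ref{lem403} or pure computation, and note that the analytic half of the argument is already contained in Theorem \ref{thm104}.
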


\begin{proof}
From Theorem \ref{thm104}, it follows that each required inequality holds for every $x \geq 1,751,189,194,177$. For smaller values of $x$ we use a computer.
\end{proof}

Further, we give the following result which refines Theorem \ref{thm104} for all sufficiently large values of $x$.

\begin{prop} \label{prop502}
For every $x \geq 467,497 = p_{39,021}$, we have
\begin{equation}
\pi(x) > \frac{x}{\log x - 1 - \frac{1}{\log x} - \frac{3}{\log^2x} + \frac{44.184}{\log^3 x}}. \tag{5.1} \label{5.1}
\end{equation}
\end{prop}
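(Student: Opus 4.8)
The plan is to mimic the proofs of Theorem~\ref{thm104} and Proposition~\ref{prop405}: apply the auxiliary-function bounds of Lemma~\ref{lem401} together with the effective estimate \eqref{3.5} for $\vartheta(x)$ to settle the inequality for large $x$, then reduce to a bounded range and clear it by a denominator comparison with an already proven bound and a direct computation. Write $g(x)$ for the right-hand side of \eqref{5.1}. First I would set $x_1 = 10^{18}$. By Lemma~\ref{lem301} the inequality $|\vartheta(x)-x| \leq 57.184\,x/\log^4 x$ holds for every $x \geq x_1$, so Lemma~\ref{lem401} applied with $k = 4$ and $\eta_4 = 57.184$ gives $\pi(x) \geq J_{4;-57.184;x_1}(x)$ for all $x \geq x_1$. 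Using $\pi(10^{18}) = 24{,}739{,}954{,}287{,}740{,}860$ and a numerical lower bound for $\vartheta(10^{18})$ from \cite{deleglisetable} (exactly as in the proof of Theorem~\ref{thm103}), I would check that $J_{4;-57.184;x_1}(x_1) - g(x_1) > 0$ and that $J_{4;-57.184;x_1}(x) - g(x)$ is nondecreasing on $[x_1,\infty)$; together these give \eqref{5.1} for every $x \geq 10^{18}$.

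The monotonicity step is where the constant $44.184 = 57.184 - 13$ in \eqref{5.1} does its work. One computes $J_{4;-57.184;x_1}'(x) = \frac{1}{\log x} - \frac{57.184}{\log^5 x} + \frac{228.736}{\log^6 x}$, and expanding $g'(x)$ (a rational function of $\log x$) in powers of $1/\log x$ gives $g'(x) = \frac{1}{\log x} - \frac{57.184}{\log^5 x} + O(1/\log^6 x)$; the $\frac1{\log x}$ and $\frac{57.184}{\log^5 x}$ terms agree precisely because the $1/\log^3 x$ coefficient in the denominator of \eqref{5.1} is $44.184 = 57.184 - 13$ while the true Panaitopol coefficient there equals $-13$. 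Clearing denominators then turns $J_{4;-57.184;x_1}'(x) > g'(x)$ into a polynomial inequality in $\log x$, to be verified for all $\log x \geq \log 10^{18}$.

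For the bounded range $467{,}497 \leq x < 10^{18}$ I would handle an upper portion by comparing \eqref{5.1} with a lower bound for $\pi(x)$ already at hand. Subtracting the denominator of the fourth inequality of Corollary~\ref{kor501} (valid for $x \geq 54{,}941{,}209$) from that of \eqref{5.1} gives $-0.024334/\log^2 x + 44.184/\log^3 x$, which is nonnegative as long as $\log x \leq 44.184/0.024334 \approx 1816$; hence that corollary already yields \eqref{5.1} on $[54{,}941{,}209,\,10^{18}]$. (One could also invoke Theorem~\ref{thm104} or the unconditional bound $\pi(x) > \text{li}(x) - \sqrt{x}\log x/(8\pi)$ of Lemma~\ref{lem403}.) It then remains to verify \eqref{5.1} for $467{,}497 \leq x < 54{,}941{,}209$ directly with Walisch's \emph{primecount} program \cite{walisch}; since $g$ is increasing and $\pi$ is a step function here, it suffices to confirm $\pi(p_n) \geq g(p_{n+1})$, i.e. $n \geq g(p_{n+1})$, for every prime $p_n$ in the interval, and the last $n$ for which this fails pins down the endpoint $467{,}497 = p_{39{,}021}$.

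The main obstacle is the monotonicity of $J_{4;-57.184;x_1}(x) - g(x)$ on $[10^{18},\infty)$: because the $\frac{1}{\log x}$ and $\frac{57.184}{\log^5 x}$ terms of $J_{4;-57.184;x_1}'$ and $g'$ coincide, the sign of the difference is controlled by the $1/\log^6 x$ coefficients, which are not small (they involve $44.184$ together with constants like $228.736$), so one has to expand $g'$ accurately to that order and confirm that the resulting polynomial inequality in $\log x$ genuinely holds from $\log x = \log 10^{18}$ on, not merely asymptotically; should it fail there, one enlarges $x_1$ and extends the otherwise routine primecount verification accordingly.
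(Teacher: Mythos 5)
Your proposal is correct and follows essentially the same route as the paper: the heart of both arguments is to feed the bound \eqref{3.5} into Lemma \ref{lem401} so that $\pi(x)\geq J_{4,-57.184,x_1}(x)>g(x)$ for $x\geq x_1$, and to finish the bounded range by direct computation. The only (harmless) difference is that the paper takes $x_1=10^{7}$, which is admissible since \eqref{3.5} holds from $1{,}091{,}159$ on, so it needs neither your intermediate bridge via Corollary \ref{kor501} nor the data at $10^{18}$.
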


\begin{proof}
Let $x_1 = 10^7$ and let $f(x)$ denote the right-hand side of \eqref{5.1}. A comparison with $J_{4, -57.184, x_1}(x)$ gives that $J_{4, -57.184, x_1}(x) > f(x)$ for every $x \geq x_1$. Now we can use \eqref{3.5} and Lemma \ref{lem401} to see that $\pi(x) > f(x)$ for every $x \geq x_1$. We may conclude with a direct computation.
% To verify \eqref{5.1} for every $x$ such that $467,497 \leq x < x_1$, it suffices to check with a computer that $\pi(p_n) > f(p_{n+1})$ for every integer $n$ such that $\pi(467,497) \leq n \leq \pi(x_1)+1$.
% , because $f$ is a strictly increasing function on the interval $(1, \infty)$.
%and define
%\begin{displaymath}
%g(x) = J_{4, -100, x_1}(x) - f(x).
%\end{displaymath}
%First, we show that the derivative of the function $g$ is positive for every $x \geq x_1$. In order to do this, we set $h(y) = 171y^8 - 865y^7 - 522y^6 - 
% 12131y^5 + 89200y^4 - 50700y^3 - 13800y^2 - 965700y + 3027600$. Then, $h(y) > 0$ for every $y \geq 4.86$. So,
%\begin{equation}
%g'(x) = \frac{h(\log x)}{(\log^4x - \log^3x - \log^2x - 3\log x + 87)^2\log^6x} > 0 \tag{3.16} \label{3.16}
%\end{equation}
%for every $x \geq x_1 \geq e^{4.86}$. Next, we show that $g(x_1) > 0$. By Dusart \cite[Table 6.1]{dusart2010}, we have $\vartheta(x_1) \leq 998485$. Since 
% $\pi(x_1) = 78498$, we use \eqref{3.4} to get
%\begin{displaymath}
%g(x_1) \geq 78498 - \frac{998485}{\log x_1} + \frac{x_1}{\log x_1} - \frac{100x_1}{\log^5x_1} - f(x_1) >41 > 0.
%\end{displaymath}
%Together with \eqref{3.16} we obtain that $g(x) = J_{4, -100, x_1}(x) - f(x)> 0$ for every $x \geq x_1$.
\end{proof}

The asymptotic expansion \eqref{1.11} implies that the slightly sharper inequality
\begin{equation}
\pi(x) > \frac{x}{\log x - 1 - \frac{1}{\log x} - \frac{3}{\log^2x}} \tag{5.2} \label{5.2}
\end{equation}
holds for all sufficiently large values of $x$. Under the assumption that the Riemann hypothesis is true, the present author \cite[Proposition 2]{axler20182} 
showed that the inequality \eqref{5.2} holds for every $x \geq 65,405,887$. Now we use Theorem \ref{thm104} to obtain the following unconditionally result.

\begin{prop} \label{prop503}
The inequality \eqref{5.2} holds unconditionally for every $x$ such that $65,405,887 \leq x \leq e^{1697}$ and every $x \geq e^{2256}$.
\end{prop}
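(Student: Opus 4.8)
The plan is to split $[65,405,887,\infty)$ into several subintervals and to cover each one with an unconditional estimate already available in this paper; the gap $(e^{1697},e^{2256})$ is precisely the range that none of them reaches.

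On the first piece, $[65,405,887,\ 1.101\times 10^{26}]$, I would exploit that the conditional proof of \eqref{5.2} in \cite{axler20182} uses the Riemann hypothesis only through Schoenfeld's inequality \eqref{4.4}. After checking — by expanding $\text{li}(x)$ far enough and a short monotonicity argument — that $\text{li}(x)-\sqrt{x}\,(\log x)/(8\pi)$ exceeds the right-hand side of \eqref{5.2} for all $x\geq 65,405,887$, the inequality \eqref{5.2} follows on any interval on which \eqref{4.4} is known to hold. By Lemma \ref{lem403}, \eqref{4.4} holds unconditionally for $2,657\leq x\leq 1.101\times 10^{26}$, which settles this piece.

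For $x$ beyond $10^{26}$ I would return to the auxiliary function $J_{k;-\eta_k;x_1}$ of \eqref{4.1}, used exactly as in the proofs of Theorem \ref{thm104} and Proposition \ref{prop502}. If $|\vartheta(t)-t|\leq \eta_k t/\log^k t$ for all $t\geq x_1$, then \eqref{1.7} gives $\pi(x)\geq J_{k;-\eta_k;x_1}(x)$, and one integration by parts shows, writing $L=\log x$,
\begin{displaymath}
J_{k;-\eta_k;x_1}(x)=\text{li}(x)+O(1)-\frac{\eta_k x}{L^{k+1}}+O\!\left(\frac{x}{L^{k+2}}\right),\qquad \frac{x}{L-1-1/L-3/L^{2}}=\text{li}(x)-\frac{13x}{L^{5}}+O\!\left(\frac{x}{L^{6}}\right),
\end{displaymath}
so $J_{k;-\eta_k;x_1}(x)$ dominates the right-hand side of \eqref{5.2} precisely when $\eta_k<13L^{\,k-4}$ and $x$ is large enough that the displayed error terms are absorbed. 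Starting from $|\vartheta(x)-x|<0.024334\,x/\log^3 x$ (Proposition \ref{prop101}, the input of Theorem \ref{thm104}), which by this criterion reaches $x$ up to about $e^{534}$, I would chain the sharper bounds $|\vartheta(x)-x|\leq \eta_3 x/\log^3 x$ with smaller $\eta_3$ recorded in \cite[Table 15]{kadiri}, each on its own range of validity and each passed through $J_{3;-\eta_3;x_1}$, supplemented by a finite machine check, to reach all of $[1.101\times 10^{26},\,e^{1697}]$. For $x\geq e^{2256}$ I would instead take a bound $|\vartheta(x)-x|\leq \eta_k x/\log^k x$ from \cite[Table 15]{kadiri} with $k\geq 5$; then $\eta_k<13L^{k-4}$ reads $L>(\eta_k/13)^{1/(k-4)}$, which holds for $L\geq 2256$ with the relevant entry, so $J_{k;-\eta_k;x_1}$ yields \eqref{5.2} for every $x\geq e^{2256}$.

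The hard part is the middle range, and it is what forces the statement to split into two pieces. Lemma \ref{lem403} expires near $10^{26}$; a $k=3$ estimate can force \eqref{5.2} only up to $x\approx e^{13/\eta_3}$, and $\eta_3$ cannot be shrunk indefinitely while keeping a useful range of validity (the best available entries run out near $e^{1697}$); a $k\geq 5$ estimate only begins to force \eqref{5.2} once $L$ passes $(\eta_k/13)^{1/(k-4)}\approx 2256$; and no $k=4$ estimate with $\eta_4<13$ is available on the intermediate range, since such a bound would close the gap outright. So the genuine labor is to select from \cite[Table 15]{kadiri} a chain of $\vartheta$-estimates whose ranges of validity overlap and jointly cover $[10^{26},e^{1697}]$ after being passed through $J_{k;-\eta_k;x_1}$, and to confirm that $(e^{1697},e^{2256})$ falls outside the reach of every one of them.
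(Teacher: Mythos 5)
Your overall architecture is the right one, and the threshold criterion you extract is exactly the arithmetic that governs the paper's proof: a lower bound for $\pi(x)$ coming from $|\vartheta(t)-t|\le \epsilon(t)\,t$ through \eqref{1.7} beats the right-hand side of \eqref{5.2} precisely when the error is below about $13x/\log^5x$, i.e.\ $\eta_k<13L^{k-4}$, equivalently $\epsilon<13/L^4$. With $\eta_3=0.024334$ this reproduces the paper's first step (validity up to roughly $e^{540}$, after the range $[65{,}405{,}887,\,2.7358\times10^{40}]$ is taken from \cite[Theorem 1]{axler20182}), and your final step for very large $x$, using a $k=4$ entry with $\eta_4<13$ from \cite[Table 15]{kadiri}, is literally what the paper does for $x\ge e^{4677}$ with $\eta_4=0.037436$.

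The genuine gap is the middle ranges $[e^{540},e^{1697}]$ and $[e^{2256},e^{4677}]$, which you defer to an unspecified ``chain'' of $\eta_3$ (resp.\ $\eta_{k\ge5}$) entries from \cite[Table 15]{kadiri}; that table is not the right tool and the chain is never exhibited. To reach $e^{1697}$ with a $k=3$ bound you would need $\eta_3<13/1697\approx7.7\times10^{-3}$ valid from some $x_1\le e^{1697}$, but in this range the underlying error term is essentially a fixed multiple $\epsilon\approx10^{-12}$ of $x$ over a very long stretch, so the quantity $\sup\epsilon(x)\log^3x$ that an $\eta_3$-entry must majorize is far too large; the $\eta_kx/\log^kx$ shape is simply a bad fit here. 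The paper instead imports bounds of the form $\vartheta(x)\ge(1-\epsilon)x$ with $\epsilon\approx1.6\times10^{-12}$ (from $e^{500}$), $1.5733\times10^{-12}$ (from $e^{1680}$) and $5.0057\times10^{-13}$ (from $e^{2256}$) from Fiori--Kadiri--Swidinsky \cite[Table 3]{kadiri3}, \cite[Table 3]{kadiri4} --- a different and sharper source than Broadbent et al.\ --- and pushes each through \eqref{1.7} via explicit functions $g_a(x)$ built from $\text{li}$, comparing values and derivatives on each subinterval. The two cut-offs in the statement are artifacts of the granularity of \emph{that} table: $13/L^4$ crosses $1.5733\times10^{-12}$ near $L\approx1696$, which is where the $e^{1680}$-entry gives out, and $5.0057\times10^{-13}<13/2256^4$ only barely, which is why the argument resumes at $e^{2256}$. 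So your diagnosis that the gap reflects the limits of \cite[Table 15]{kadiri} misattributes its source, and without producing the actual intermediate $\vartheta$-estimates the proof does not close.
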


\begin{proof}
In \cite[Theorem 1]{axler20182}, the inequality was already proved for every $x$ with $65,405,887 \leq x \leq 2.7358 \times 10^{40}$. If we utilize Theorem \ref{thm104}, it turns out that the inequality \eqref{5.2} holds unconditionally for every $x$ such that $65,405,887 \leq x \leq e^{540}$.

Now, let $f(x)$ denote the right-hand side of \eqref{5.2}. In order to verify the required inequality for every $x$ with $e^{540} \leq x \leq e^{1680}$, we set $c_0 = 1-1.6341 \times 10^{-12}$. By \cite[Table 3]{kadiri3}, we have $\vartheta(x) \geq c_0x$ for every $x > e^{500}$. Applying this inequality to \eqref{1.7}, we get
\begin{equation}
\pi(x) > g_0(x) \tag{5.3} \label{5.3}
\end{equation}
for every $x \geq e^{500}$, where $g_0(x) = c_0(\text{li}(x) - \text{li}(e^{500}) + e^{500}/500)$. If we show that $g_0(x) > f(x)$ for every $x$ satisfying $e^{540} \leq x \leq e^{1680}$, we can use \eqref{5.3} to see that the required inequality \eqref{5.2} holds for every $x$ with $e^{540} \leq x \leq e^{1680}$. Since $g_0'(x) > f'(x)$ for every $x$ so that $9 \leq x \leq e^{1680}$, it remains to show that $g_0(x_0) > f(x_0)$, where $x_0 = e^{540}$. First, we note that
\begin{equation}
\sum_{k=1}^6 \frac{(k-1)!}{\log^kt} < \frac{\text{li}(t)}{t} < \frac{1.003}{\log t}, \tag{5.4} \label{5.4}
\end{equation}
where the left-hand side inequality holds for every $t \geq 565$ and the right-hand side inequality is valid for every $t \geq e^{500}$.
Therefore,
\begin{displaymath}
\frac{g_0(x_0) - f(x_0)}{x_0} > c_0 \sum_{k=1}^6 \frac{(k-1)!}{540^k} - \frac{0.003 c_0}{e^{40}} - \frac{f(x_0)}{x_0}.
\end{displaymath}
Since the right-hand side of the last inequality is positive and we conclude that the required inequality holds for every $x$ with $x_0 \leq x \leq e^{1680}$.

Next, we check the inequality \eqref{5.2} for every $x$ satisfying $e^{1680} \leq x \leq e^{1697}$. Here, we set $c_1 = 1 - 1.5733 \times 10^{-12}$ and $d_1 = 1 - 1.5907 \times 10^{-12}$. According to Fiori, Kadiri, and Swidinsky \cite[Table 3]{kadiri3}, we have
\begin{align}
\vartheta(x) \geq c_1x \q\q (x \geq e^{1680}), \tag{5.5} \label{5.5}\\
\vartheta(x) \geq d_1x \q\q (x \geq e^{1000}). \tag{5.6} \label{5.6}
\end{align}
If we substitute the inequalities \eqref{5.5} and \eqref{5.6} into \eqref{1.7}, we get that
\begin{equation}
\pi(x) > g_{1680}(x) \tag{5.7} \label{5.7}
\end{equation}
for every $x \geq e^{1680}$, where
\begin{equation}
g_a(x) = c_1\left( \text{li}(x) - \text{li}(e^a) + \frac{e^a}{a} \right) + d_1 \left( \text{li}(e^a) - \frac{e^a}{a} - \text{li}(e^{1000}) + \frac{e^{1000}}{1000} \right). \tag{5.8} \label{5.8}
\end{equation}
Again it suffices to show that $g_{1680}(x) \geq f(x)$ for every $x$ satisfying $e^{1680} \leq x \leq e^{1697}$. We can use the left-hand side inequality of \eqref{5.4} and the inequality $\text{li}(t) < 1.0006t/\log t$, which holds for every $t \geq e^{1680}$, to see that
\begin{equation}
\frac{g_{1680}(e^{1680}) - f(e^{1680})}{e^{1680}} > \frac{c_1}{1680} + d_1\sum_{k=2}^6 \frac{(k-1)!}{1680^k} - \frac{0.0006d_1}{1000 e^{680}} - \frac{f(e^{1680})}{e^{1680}} > 0. \tag{5.9} \label{5.9}
\end{equation}
Together with $g_{1680}'(t) \geq f'(t)$ for every $t$ with $e^{1680} \leq t \leq t_1$, where $t_1 = 1696.0578605\ldots$, it turns out that $\pi(x) > g_{1680}(x) > f(x)$ for every $x$ with $e^{1680} \leq x \leq t_1$. Similar to \eqref{5.9}, we compute that $g_{1680}(e^{1697}) > f(e^{1697})$. Since $g_{1680}'(t) \leq f'(t)$ for every $t$ with $t \geq t_1$, we see that $\pi(x) > g_{1680}(x) > f(x)$ for every $x$ with $t_1 \leq x \leq e^{1697}$.

Now, we deal with the case where $x$ satisfies $e^{2256} \leq x \leq e^{2259}$. Here, let $c_2 = 1 - 5.0057 \times 10^{-13}$. By \cite[Table 3]{kadiri4}, we have $\vartheta(x) \geq c_2x$ for every $x \geq e^{2256}$. Similar to \eqref{5.7}, we get that $\pi(x) > g_{2256}(x)$ for every $x \geq e^{2256}$, where $g_a(x)$ is defined as in \eqref{5.8}. Analogous to the proof that $\pi(x) > g_{1680}(x) > f(x)$ for every $x$ with $e^{1680} \leq x \leq t_1$, we see that $\pi(x) > g_{2256}(x) > f(x)$ for every $x$ satisfying $e^{2256} \leq x \leq e^{2258}$.

The cases where
\begin{itemize}
\item $x$ satisfies $e^{2258} \leq x \leq e^{2265}$,
\item $x$ satisfies $e^{2265} \leq x \leq e^{2289}$,
\item $x$ satisfies $e^{2289} \leq x \leq e^{2377}$,
\item $x$ satisfies $e^{2377} \leq x \leq e^{4677}$
\end{itemize}
can be treated as the case where $x$ satisfies $e^{2256} \leq x \leq e^{2258}$ and we leave the details to the reader.

The final step of the proof consists in the verification of the required inequality for every $x \geq x_1$, where $x_1 = e^{4677}$. By \cite[Table 15]{kadiri}, we have $\vartheta(x) > x(1 - 0.037436/\log^4 x)$ for every $x \geq x_1$. Now we can utilize Lemma \ref{lem401} to get $\pi(x) \geq J_{4, -0.037436, x_1}(x)$ for every $x \geq x_1$. We want to show that $J_{4, -0.037436, x_1}(x) > f(x)$ for every $x \geq x_1$. Since $J_{4, -0.037436, x_1}'(x) > f'(x)$ for every $x \geq x_1$, it remains to show that $J_{4, -0.037436, x_1}(x_1) > f(x_1)$. For a better readability, we set $J(x) =J_{4, -0.037436, x_1}(x)$.
% Using \eqref{1.7}, we obtain
% \begin{displaymath}
% \frac{J(x_0) - f(x_0)}{x_0} > \frac{1}{x_0} \int_2^{x_0} \frac{\vartheta(t)}{t \log^2t} \, \text{d}t + \frac{1}{\log x_0} - \frac{3.8516}{\log^5(x_0)} - \frac{f(x_0)}{x_0}.
% \end{displaymath}
By \cite[Table 3]{kadiri4}, we have $\vartheta(x) > c_3x$ for every $x \geq e^{2000}$, where $c_3 = 1-1.5692 \times 10^{-12}$. Applying this inequality to \eqref{1.7}, we see that
\begin{displaymath}
\pi(x_1) - \frac{\vartheta(x_1)}{\log x_1} \geq c_3 \left( \text{li}(x_1) - \frac{x_1}{\log x_1} - \text{li}(e^{2000}) + \frac{e^{2000}}{2000}\right).
\end{displaymath}
If we substitute this into \eqref{4.1}, we get
\begin{displaymath}
\frac{J(x_1) - f(x_1)}{x_1} > \frac{c_3}{x_1} \left( \text{li}(x_1) - \text{li}(e^{2000}) + \frac{e^{2000}}{2000} \right) + \frac{1.5692 \times 10^{-12}}{4677} - \frac{0.037436}{4677^5} - \frac{f(x_1)}{x_1}.
\end{displaymath}
Finally, we use \eqref{5.4} to see that
\begin{displaymath}
\frac{J(x_1) - f(x_1)}{x_1} > \frac{1}{4677} + c_3 \sum_{k=2}^6 \frac{(k-1)!}{4677^k} - \frac{c_3}{e^{2677}} + \frac{c_3}{2000e^{2677}} - \frac{0.037436}{4677^5} - \frac{f(x_1)}{x_1}.
\end{displaymath}
Since the right-hand side of the last inequality is positive, we obtain that that the required inequality \eqref{5.2} holds for every $x \geq e^{4677}$, and we arrive at the end of the proof.
% Using the inequalities $x/\log x + x7\log^2x < \text{li}(x)$
% % By \cite[Tables 13 and 15]{kadiri} and \cite[Theorem 1]{platttrud}, we have
% % \begin{equation}
% % |\vartheta(x) - x| \leq \frac{134,410x}{\log^5 x} \tag{3.5} \label{3.5}
% % \end{equation}
% % for every $x > 1$. 
% Let $k=5$, $x_1 = x_1(5) = 10^{10}$, and
% \begin{displaymath}
% f(x) = \frac{x}{\log x - 1 - \frac{1}{\log x} - \frac{3}{\log^2x} - \frac{13}{\log^3x} + \frac{134339}{\log^4x}}.
% \end{displaymath}
% It is easy to see that $J_{5; -134,410; x_1}(x) > f(x)$ for every $x \geq x_1$. Now we use \eqref{3.5} and Lemma \ref{lem401} to get $\pi(x) > f(x)$ for every $x \geq x_1$, which implies the validity of \eqref{5.3} for every $x \geq 7.92417731 \times 10^{4487}$.
\end{proof}

\begin{rema}
By \eqref{1.11}, the even sharper inequality
\begin{displaymath}
\pi(x) > \frac{x}{\log x - 1 - \frac{1}{\log x} - \frac{3}{\log^2x} - \frac{13}{\log^3x}}
\end{displaymath}
holds for all sufficiently large values of $x$. Similar to the proof of Proposition \ref{prop503}, we get that this inequality holds for every $x$ satisfying $11,471,757,461 \leq x \leq e^{57.820987}$ and every $x \geq e^{5000}$.
\end{rema}

Let $n$ be a positive integer. Then \eqref{4.8} yields the inequality
\begin{displaymath}
\pi(x) > \frac{x}{\log x} + \frac{x}{\log^2 x} + \frac{2x}{\log^3 x} + \frac{6x}{\log^4 x} + \frac{24x}{\log^5 x} + \ldots + \frac{(n-1)! x}{\log^nx} \tag{5.10} \label{5.10}
\end{displaymath}
for all sufficiently large values of $x$. In the following proposition, we describe a method to find lower bounds for $\pi(x)$ in the direction of \eqref{5.10} by using lower bounds for $\pi(x)$ in the direction of \eqref{1.11}.

\begin{prop} \label{prop504}
Let $n$ be a positive integer and let $a_0 > 0$ and $a_1, \ldots, a_n$ be negative real numbers. Suppose that there is a positive real number $x_0$ such that the inequalities
\begin{equation}
a_0\log x + a_1 + \frac{a_2}{\log x} + \ldots + \frac{a_n}{\log^{n-1}x} > 0 \tag{5.11} \label{5.11}
\end{equation}
and
\begin{equation}
\pi(x) > \frac{x}{ a_0\log x + a_1 + \frac{a_2}{\log x} + \ldots + \frac{a_n}{\log^{n-1}x}} \tag{5.12} \label{5.12}
\end{equation}
hold simultaneously for every $x \geq x_0$. Then we have
\begin{displaymath}
\pi(x) > \frac{b_0x}{\log x} + \frac{b_1x}{\log^2 x} + \ldots + \frac{b_nx}{\log^{n+1} x}
\end{displaymath}
for every $x \geq x_0$, where $b_0, \ldots, b_n$ are real numbers recursively defined by
\begin{equation}
b_0 = 1/a_0, \q\q \text{and} \q\q b_k = -\frac{1}{a_0} \sum_{i=1}^k a_i b_{k-1} \q (1 \leq k \leq n). \tag{5.13} \label{5.13}
\end{equation}
\end{prop}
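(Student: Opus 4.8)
The plan is to show that, for every $x \ge x_0$, the quantity $x/D(x)$ on the right of \eqref{5.12}, where
\[
D(x) = a_0\log x + a_1 + \frac{a_2}{\log x} + \cdots + \frac{a_n}{\log^{n-1}x},
\]
already exceeds $\sum_{k=0}^{n} b_k x/\log^{k+1}x$; since \eqref{5.12} gives $\pi(x) > x/D(x)$, this yields the asserted bound. I would first note that the hypotheses force $x_0 \ge 2$ (for $x < 2$ we have $\pi(x) = 0$, which cannot be larger than $x/D(x) > 0$), so $\log x > 0$ on the whole range. Putting $u = 1/\log x \in (0,\infty)$, $P(u) = \sum_{i=0}^{n} a_i u^{i}$ and $Q(u) = \sum_{k=0}^{n} b_k u^{k}$, one has $D(x) = P(u)/u$ and $\sum_{k=0}^{n} b_k x/\log^{k+1}x = x\,u\,Q(u)$, so the whole matter reduces to proving $x\,u\,Q(u) < x/D(x)$, i.e.\ $P(u)Q(u) < 1$, for every $x \ge x_0$.

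The key step is to expand
\[
P(u)Q(u) = \sum_{m=0}^{2n}\Bigl(\,\sum_{\substack{i+k=m\\ 0\le i,k\le n}} a_i b_k\Bigr)u^{m}
\]
and to use the recursion \eqref{5.13} (with $b_{k-i}$ on the right) to kill the low-order coefficients: the constant term is $a_0 b_0 = 1$, and for $1\le m\le n$ the coefficient of $u^{m}$ equals $a_0 b_m + \sum_{i=1}^{m} a_i b_{m-i} = 0$. Hence
\[
P(u)Q(u) = 1 + \sum_{m=n+1}^{2n} c_m u^{m}, \qquad c_m = \sum_{i=m-n}^{n} a_i b_{m-i},
\]
and it remains to check that every $c_m$ is negative. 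For this I would first prove by induction that $b_k > 0$ for $0\le k\le n$: indeed $b_0 = 1/a_0 > 0$, and if $b_0,\dots,b_{k-1}>0$ then $b_k = \sum_{i=1}^{k}(-a_i/a_0)b_{k-i} > 0$, each factor $-a_i/a_0$ being positive by the sign hypothesis on $a_1,\dots,a_n$. Then, since for $n+1\le m\le 2n$ the summation index in $c_m$ runs over $i\in\{m-n,\dots,n\}\subseteq\{1,\dots,n\}$ (the value $i=0$ is excluded because it would force $k=m>n$), each summand $a_i b_{m-i}$ is a negative number times a positive one, and the sum is nonempty, so $c_m<0$ strictly. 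With $u>0$ this gives $P(u)Q(u)<1$.

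Finally, dividing $P(u)Q(u)<1$ by $D(x)>0$ (hypothesis \eqref{5.11}) and multiplying by $x>0$ yields
\[
\sum_{k=0}^{n}\frac{b_k x}{\log^{k+1}x} = x\,u\,Q(u) < \frac{x}{D(x)} < \pi(x)
\]
for every $x\ge x_0$, which is exactly the assertion of the proposition. I expect the only genuinely delicate point to be the sign analysis of the tail coefficients $c_m$ — this is where the hypothesis that $a_1,\dots,a_n$ are all negative is used, via the positivity of the $b_k$; the remainder is the elementary algebra of inverting the truncated series $P(u)$ together with the bookkeeping that \eqref{5.13} annihilates precisely the coefficients of $u^0,\dots,u^n$ in $P(u)Q(u)$.
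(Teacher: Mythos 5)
Your proposal is correct and follows essentially the same route as the paper's proof: both form the product of the truncated series $P$ and $Q$ (the paper's $R(y)S(y)$ with $y=\log x$), use the recursion \eqref{5.13} (read with $b_{k-i}$, as you correctly inferred) to annihilate the coefficients of orders $1$ through $n$, and deduce $P(u)Q(u)\leq 1$ from the sign pattern $a_i<0$, $b_k>0$ before dividing by the positive denominator. Your explicit induction for $b_k>0$ and the observation that the hypotheses force $\log x>0$ are small refinements the paper leaves implicit.
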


\begin{proof}
For $y > 0$, we define $R(y) = \sum_{k=0}^n a_i/y^i$ and $S(y) = \sum_{i=0}^n b_i/y^i$. For $i \in \{ 1, \ldots, 2n\}$, we set
\begin{displaymath}
a'_i =
\begin{cases}
a_i, &\text {if $i \in \{ 1 , \ldots, n\}$,} \\
0, &\text {otherwise} \nonumber
\end{cases}
\quad\quad \text{and} \quad\quad
b'_i =
\begin{cases}
b_i, &\text {if $i \in \{ 1 , \ldots, n\}$,} \\
0, &\text {otherwise.} \nonumber
\end{cases}
\end{displaymath}
Using \eqref{5.13} together with $b'_{n+1} = 0$, we can see that
% Then we get
% \begin{displaymath}
% R(y) S(y) = a_0b_0 + \sum_{k=1}^{2n} \left( a_0b'_k + \sum_{i=1}^k a'_ib'_{k-i} \right) \frac{1}{y^k}.
% \end{displaymath}
% Now we can use \eqref{5.7} and the definition of $b'_k$ to see that
\begin{displaymath}
R(y) S(y) = 1 + \sum_{k=n+1}^{2n} \sum_{i=1}^k \frac{a'_ib'_{k-i}}{y^k}.
\end{displaymath}
Since $a'_ib'_{k-i} \leq 0$ for every $i$ with $1 \leq i \leq 2n$ and every $k$ satisfying $n+1 \leq k \leq 2n$, we get $R(y) S(y) \leq 1$. By \eqref{5.11}, we have $R(\log x) > 0 $ for every $x \geq x_0$. Now we can use \eqref{5.12} to get $\pi(x) > x/(R(x) \log x) \geq xS(\log x)/\log x$ for every $x \geq x_0$ and we arrive at the end of the proof.
\end{proof}

The best explicit result in the direction of \eqref{5.3} was found in \cite[Proposition 5]{axlerPrimeF}.
% , namely that
% \begin{equation}
% \pi(x) > \frac{x}{\log x} + \frac{x}{\log^2 x} + \frac{2x}{\log^3 x} + \frac{5.65x}{\log^4 x} + \frac{23.65x}{\log^5 x} + \frac{118.25x}{\log^6 x} + 
% \frac{709.5x}{\log^7 x} + \frac{4966.5x}{\log^8 x} \tag{3.12} \label{3.12}
% \end{equation} 
% for every $x \geq 1\,332\,450\,001$.
The following refinements of it are a consequence of Proposition \ref{prop504}, Theorem \ref{thm104}, and Corollary \ref{kor501}.

\begin{kor} \label{kor505}
We have
\begin{displaymath}
\pi(x) > \frac{x}{\log x} + \frac{x}{\log^2 x} + \frac{2x}{\log^3 x} + \frac{b_4x}{\log^4 x} + \frac{b_5x}{\log^5 x} + \frac{b_6x}{\log^6 x} + 
\frac{b_7x}{\log^7 x} + \frac{b_8x}{\log^8 x}
\end{displaymath}
for every $x \geq x_0$, where
\begin{center}
\begin{tabular}{|l||c|c|c|c|c|c|c|c|c|c|}
\hline
$b_4$\rule{0mm}{4mm} & $          5.975666$ & $        5.975666$ & $        5.975666$ & $      5.975666$ & $   5.975666$\\ \hline
$b_5$\rule{0mm}{4mm} & $         23.975666$ & $       23.975666$ & $       23.975666$ & $     23.975666$ & $          0$\\ \hline 
$b_6$\rule{0mm}{4mm} & $         119.87833$ & $       119.87833$ & $      119.87833 $ & $             0$ & $          0$\\ \hline
$b_7$\rule{0mm}{4mm} & $         719.26998$ & $       719.26998$ & $               0$ & $             0$ & $          0$\\ \hline
$b_7$\rule{0mm}{4mm} & $        5034.88986$ & $               0$ & $               0$ & $             0$ & $          0$\\ \hline
$x_0$\rule{0mm}{4mm} & $ 1,681,111,802,141$ & $ 721,733,241,667$ & $ 110,838,719,141$ & $ 1,331,691,853$ & $ 10,383,799$ \\ \hline
\end{tabular} \ .
\end{center}
\end{kor}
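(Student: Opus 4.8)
The plan is to deduce each of the five inequalities by feeding into Proposition \ref{prop504} a lower bound for $\pi(x)$ in the direction of \eqref{1.11} that the paper has already established, and then to clear a short leftover range by a direct machine computation.

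First I would spell out the dictionary between the two shapes. A bound $\pi(x) > x/(\log x - 1 - 1/\log x - c_2/\log^2 x - \cdots - c_m/\log^m x)$ valid for $x \geq x_\ast$ is exactly hypothesis \eqref{5.12} for the data $a_0 = 1$, $a_1 = a_2 = -1$, $a_{j+1} = -c_j$ $(2 \leq j \leq m)$ and $n = m+1$, and for these data hypothesis \eqref{5.11} holds trivially on the same range because the term $a_0 \log x = \log x$ dominates. For the first column of Corollary \ref{kor505} I would take this input from Theorem \ref{thm104} (so $m = 6$, $x_\ast = 1{,}751{,}189{,}194{,}177$), and for the remaining four columns from the four columns of Corollary \ref{kor501} ($x_\ast = 1{,}035{,}745{,}443{,}241$, $153{,}887{,}581{,}621$, $7{,}713{,}187{,}213$, $54{,}941{,}209$); the successive truncation of the input denominator there is exactly what produces the successive vanishing of $b_8$, then $b_7$, then $b_6$, then $b_5$ in Corollary \ref{kor505}.

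Second, I would run the recursion \eqref{5.13}: with $a_0 = 1$ it is the convolution $b_k = -\sum_{i=1}^k a_i b_{k-i}$, so $b_0 = 1$, $b_1 = 1$, $b_2 = 2$, and then $b_3 = 5.975666$, $b_4 = 23.975666$, $b_5 = 119.87833$, $b_6 = 719.26998$, $b_7 = 5034.88986$, the displayed decimals being rounded inward so that each tabulated coefficient is at most the true one, with truncation of the input simply killing the corresponding tail of the $b_k$. Here one must track the index shift: Proposition \ref{prop504} records the coefficient of $x/\log^{k+1}x$ as its $b_k$, whereas the table of Corollary \ref{kor505} records the coefficient of $x/\log^n x$ as $b_n$, so the $b_3, \ldots, b_7$ just computed appear there as $b_4, \ldots, b_8$. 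Proposition \ref{prop504} then returns the asserted inequality for all $x \geq x_\ast$.

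Finally, for each column I would verify the asserted inequality on the interval $[x_0, x_\ast)$ between the threshold $x_0$ recorded in Corollary \ref{kor505} and the larger threshold $x_\ast$ of the input, by a finite computation — legitimate precisely because the output bound is weaker than its input and therefore survives below $x_\ast$. Since $\pi$ is constant between consecutive primes while the right-hand side strictly increases, it suffices to test the inequality at prime arguments in that range, or simply at every integer, as is done elsewhere in the paper with Walisch's \emph{primecount}. I expect no analytic obstacle, all of the analytic content being already carried by Theorem \ref{thm104}, Corollary \ref{kor501}, and Proposition \ref{prop504}; the only delicate points are bookkeeping — confirming that the convolution reproduces the tabulated $b_k$ to the stated precision and with the rounding in the safe direction — and the sizes of the residual verifications, the longest of which (second column) covers an interval of length about $3 \times 10^{11}$ and is still within reach of the same tools used for Proposition \ref{prop101} and Corollary \ref{kor404}.
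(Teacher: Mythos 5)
Your proposal is correct and is essentially the paper's own argument: the first column is obtained by feeding Theorem \ref{thm104} into Proposition \ref{prop504}, the remaining four columns by feeding in the successive truncations of Corollary \ref{kor501}, and the gap between each tabulated $x_0$ and the input threshold $x_\ast$ is closed by direct computation. Your bookkeeping (reading \eqref{5.13} as the convolution $b_k=-\sum_i a_i b_{k-i}$, and the index shift between Proposition \ref{prop504} and the table) is also right; in fact the constants in Theorem \ref{thm104} are chosen so that the tabulated $b_k$ come out exactly, with no rounding needed.
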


\begin{proof}
In order to prove the first inequality, we combine Proposition \ref{prop504} and Theorem \ref{thm104} to see that this inequality holds for every $x \geq 1,751,189,194,177$. For smaller values of $x$, we use a computer. Further, we use Proposition \ref{prop504}, Corollary \ref{kor501}, and a direct computation for smaller values of $x$ to verify the remaining inequalities.
\end{proof}

\begin{rema}
By \eqref{5.10}, we see that the inequality
\begin{equation}
\pi(x) > \frac{x}{\log x} + \frac{x}{\log^2 x} + \frac{2x}{\log^3 x} + \frac{6x}{\log^4 x} \tag{5.14} \label{5.14}
\end{equation}
holds for all sufficiently large values of $x$. If we combine Proposition \ref{prop504}, Proposition \ref{prop503}, and \cite[Theorem 2]{axler20182}, it turns out that the inequality \eqref{5.14} holds for every $x$ such that $10,384,261 \leq x \leq e^{1697}$ and every $x \geq e^{2256}$.
\end{rema}

\section{Proof of Theorem \ref{thm105}}

In this section, we want to find unrestricted effective estimates for the sum
\begin{displaymath}
\sum_{p \leq x} \frac{1}{p}
\end{displaymath}
where $p$ runs over primes not exceeding $x$.
% Rosser and Schoenfeld \cite[Theorem 20]{rosserschoenfeld1962} used the calculation in \cite{appel} to see that $A_1(x) > 0$ for every $1 < x \leq 10^8$ and raised the question whether this inequality hold for every $x > 1$. Robin \cite[Th\'eor\`eme 2]{robin1983} proved that the function $A_1(x)$ changes the sign infinitely often, which leads to a negative answer to the above question. By adapting a method for bounding Skewes' number, Büthe \cite[Theorem 1.1]{buethe2015} found that there exists an $x_0 \in [\exp(495.702833109),\exp(495.702833165)]$ such that $A_1(x)$ is negative for every $x \in [x_0 - \exp(239.046541), x_0]$. In the following proposition, we see that the first sign change of $A_1(x)$ occurs after $1,757,126,630,797$.
% 
% \begin{prop} \label{prop301}
% For every $x$ with $1 < x \leq 1,757,126,630,797$, we have $A_1(x) > 0$.
% \end{prop}
% 
% \begin{proof}
% We check 
% \end{proof}
%
For this purpose, we use the method investigated by Rosser and Schoenfeld \cite[p.\:74]{rosserschoenfeld1962}. They derived a remarkable identity which connects the sum of the reciprocals of all prime numbers not exceeding $x$ with Chebyshev's $\vartheta$-function by showing that
\begin{equation}
A_1(x) =  \frac{\vartheta(x) - x}{x \log x} - \int_x^{\infty} \frac{(\vartheta(y)-y)(1 + \log y)}{y^2\log^2y} \, \text{d}y, \tag{6.1} \label{6.1}
\end{equation}
where
\begin{equation}
A_1(x) = \sum_{p \leq x} \frac{1}{p} - \log \log x - B. \tag{6.2} \label{6.2}
\end{equation}
Here, the constant $B$ is defined as in \eqref{1.15}. Applying \eqref{1.2} to \eqref{6.1}, Rosser and Schoenfeld \cite[p.\:68]{rosserschoenfeld1962} refined the error term in Mertens' result \eqref{1.14} by giving $A_1(x) =  O (\exp(-a\sqrt{\log x}))$ as $x \to \infty$, where $a$ is an absolute positive constant. Then \cite[Theorem 5]{rosserschoenfeld1962} they used explicit estimates for Chebyshev's $\vartheta$-function to show that
\begin{displaymath}
- \frac{1}{2 \log^2 x} < A_1(x) < \frac{1}{2 \log^2 x}, \tag{6.3} \label{6.3}
\end{displaymath}
where the left-hand side inequality is valid for every $x > 1$ and the right-hand side inequality holds for every $x \geq 286$. Meanwhile there are several improvements of \eqref{6.3} (see, for instance, \cite[Theorem 5.6]{dusart20181} and \cite[Proposition 7]{axlerPrimeF}). In Theorem \ref{thm105}, we give the current best unconditionally effective estimates for $A_1(x)$. The proof is now rather simple.

\begin{proof}[Proof of Theorem \ref{thm105}]
It suffices to combine \eqref{6.1} with Proposition \ref{prop101}.
% to see that both inequalities in \eqref{3.5} hold for every $x \geq 1,757,126,630,797$. If we combine this with Proposition \ref{prop301}, it turns out that the left-hand side inequality also holds for every $x$ such that $1 < x < 1,757,126,630,797$. It remains to show that the right-hand side inequality of \eqref{3.5} also holds for every $x$ such that $??? \leq x < 1,757,126,630,797$.
% % By Büthe \cite[Theorem 2]{buthe2018}, we have $\vartheta(x) < x$ for every $x$ with $1 \leq x \leq 10^{19}$. Applying this to \eqref{3.3}, we get that
% % \begin{equation}
% % A_1(x) <  - \int_x^{\infty} \frac{(\vartheta(y)-y)(1 + \log y)}{y^2\log^2y} \, \text{d}y. \tag{3.6} \label{3.6}
% % \end{equation}
% % Next, we apply the estimation $\theta(x) < x + $
% For this purpose, we check with a computer that
% \begin{displaymath}
% A_1(p_n) < \frac{0.024334}{3\log^3 p_{n+1}} \left( 1 + \frac{15}{4\log p_{n+1}} \right)
% \end{displaymath}
% for every positive integer $n$ with $\leq n \leq \pi(1,757,126,630,797)$.
\end{proof}

\begin{rema}
Note that the positive integer $N_0 = 1,757,126,630,797$ might not be the smallest positive integer $N$ so that the inequality given in Theorem  \ref{thm105} holds for every $x \geq N$.
\end{rema}

\begin{rema}
 Rosser and Schoenfeld \cite[Theorem 20]{rosserschoenfeld1962} used the calculation in \cite{appel} to see that $A_1(x) > 0$ for every $1 < x \leq 10^8$ and raised the question whether this inequality hold for every $x > 1$. Robin \cite[Th\'eor\`eme 2]{robin1983} proved that the function $A_1(x)$ changes the sign infinitely often, which leads to a negative answer to the obove question. By adapting a method for bounding Skewes' number, Büthe \cite[Theorem 1.1]{buethe2015} found that there exists an $x_0 \in [\exp(495.702833109),\exp(495.702833165)]$ such that $A_1(x)$ is negative for every $x \in [x_0 - \exp(239.046541), x_0]$.
\end{rema}

\begin{rema}
Under the assumption that the Riemann hypothesis is true, Schoenfeld \cite[Corollary 2]{schoenfeld1976} found some better estimate for the sum of the reciprocals of all prime numbers not exceeding $x$.
% , namely
% \begin{displaymath}
% \left| \sum_{p \leq x} \frac{1}{p} - \log \log x - B \right| \leq \frac{3\log x + 4}{8\pi \sqrt{x}}
% \end{displaymath}
% for every $x \geq 13.5$.
This result was recently improved by Dusart \cite[Theorem 4.1]{dusart20182}.
\end{rema}

Using the definition \eqref{1.15} of $B$, we get
\begin{equation}
e^{\gamma} \log x \prod_{p \leq x} \left( 1 - \frac{1}{p} \right) = e^{-S(x) - A_1(x)}, \tag{6.4} \label{6.4}
\end{equation}
where
\begin{equation}
S(x) = \sum_{p>x} \left( \log \left( 1 - \frac{1}{p} \right) + \frac{1}{p} \right) = - \sum_{n=2}^{\infty} \frac{1}{n} \sum_{p > x} \frac{1}{p^n}. \tag{6.5} \label{6.5}
\end{equation}
By Rosser and Schoenfeld \cite[p.\:87]{rosserschoenfeld1962}, we have
\begin{equation}
- \frac{1.02}{(x-1)\log x} < S(x) < 0 \tag{6.6} \label{6.6}
\end{equation}
for every $x > 1$. Hence,
%An immediate consequence of \eqref{5.1} is that
%\begin{displaymath}
%\lim_{x \to \infty} \prod_{p \leq x} \left( 1 - \frac{1}{p} \right) = 0.
%\end{displaymath}
the asymptotic formula \eqref{1.14} gives $A_2(x) = O(1/\log^2x)$ as $x \to \infty$, where
%a more accurate result, namely
\begin{displaymath}
A_2(x) = \frac{e^{-\gamma}}{\log x} - \prod_{p \leq x} \left( 1 - \frac{1}{p} \right).
\end{displaymath}
In \cite[Theorem 7]{rosserschoenfeld1962}, Rosser and Schoenfeld found that
\begin{displaymath}
\frac{e^{-\gamma}}{\log x} \left( 1 - \frac{1}{2 \log^2 x} \right) < \prod_{p \leq x} \left( 1 - \frac{1}{p} \right) < \frac{e^{-\gamma}}{\log x} \left( 1 + \frac{1}{2 \log^2 x} \right),
\end{displaymath}
where the left-hand side inequality is valid for every $x \geq 285$ and the right-hand side inequality holds for every $x > 1$.
% After several improvements, the
% sharpest known estimates for this product are due to Dusart \cite[Theorem 5.9]{dusart2016}. Following Rosser's and Schoenfeld's proof of \eqref{6.1}, Dusart 
% used \eqref{5.4} and Lemma \ref{lem201} with $k=3$ and $\eta_k = 0.5$ to find
% \begin{displaymath}
% \frac{e^{-\gamma}}{\log x} \left( 1 - \frac{0.2}{\log^3 x} \right) < \prod_{p \leq x} \left( 1 - \frac{1}{p} \right) < \frac{e^{-\gamma}}{\log x} \left( 1 +
% \frac{0.2}{\log^3 x} \right)
% \end{displaymath}
% for every $x \geq 2,278,382$.
We use \eqref{6.4} combined with Theorem \ref{thm105} to obtain the following refinement of \cite[Proposition 9]{axlerPrimeF}.

\begin{prop} \label{prop601}
% Let
% \begin{displaymath}
% f(x) = \frac{0.024334}{3\log^3 x} \left( 1 + \frac{15}{4\log x} \right).
% \end{displaymath}
For every $x \geq 1,757,126,630,797$, we have
\begin{displaymath}
\frac{e^{-\gamma}}{\log x} \exp(-f(x)) < \prod_{p \leq x} \left( 1 - \frac{1}{p} \right) < \frac{e^{-\gamma}}{\log x} \exp \left( f(x) + \frac{1.02}{(x-1)\log x} \right), 
\end{displaymath}
where $f(x)$ denotes the right-hand side of \eqref{1.16}.
% and for every $x \geq ???$, we have
% \begin{equation}
% \prod_{p \leq x} \left( 1 - \frac{1}{p} \right) > \frac{e^{-\gamma}}{\log x} \exp(f(x)).
% \end{equation}
\end{prop}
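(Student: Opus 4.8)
The plan is to start from the Rosser--Schoenfeld identity \eqref{6.4}, which states that
\begin{displaymath}
e^{\gamma} \log x \prod_{p \leq x} \left( 1 - \frac{1}{p} \right) = e^{-S(x) - A_1(x)},
\end{displaymath}
or equivalently
\begin{displaymath}
\prod_{p \leq x} \left( 1 - \frac{1}{p} \right) = \frac{e^{-\gamma}}{\log x} \, e^{-S(x) - A_1(x)}.
\end{displaymath}
So the product is sandwiched as soon as we have two-sided bounds for the exponent $-S(x) - A_1(x)$. For $A_1(x)$ we invoke Theorem \ref{thm105}, which gives $|A_1(x)| \leq f(x)$ for every $x \geq 1{,}757{,}126{,}630{,}797$, where $f(x)$ is the right-hand side of \eqref{1.16}. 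For $S(x)$ we invoke \eqref{6.6}, which gives $-1.02/((x-1)\log x) < S(x) < 0$ for every $x > 1$.

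Next I would combine these. From $S(x) < 0$ and $A_1(x) \leq f(x)$ we get $-S(x) - A_1(x) > -f(x)$, hence $e^{-S(x)-A_1(x)} > e^{-f(x)}$, which yields the stated lower bound
\begin{displaymath}
\prod_{p \leq x} \left( 1 - \frac{1}{p} \right) > \frac{e^{-\gamma}}{\log x} \, e^{-f(x)}.
\end{displaymath}
For the upper bound, from $S(x) > -1.02/((x-1)\log x)$ and $-A_1(x) \leq f(x)$ we get $-S(x) - A_1(x) < f(x) + 1.02/((x-1)\log x)$, hence
\begin{displaymath}
\prod_{p \leq x} \left( 1 - \frac{1}{p} \right) < \frac{e^{-\gamma}}{\log x} \exp \left( f(x) + \frac{1.02}{(x-1)\log x} \right).
\end{displaymath}
This matches the claimed inequalities exactly, and the range $x \geq 1{,}757{,}126{,}630{,}797$ is inherited from Theorem \ref{thm105} (the bound \eqref{6.6} on $S(x)$ holds for all $x > 1$, so it imposes no further restriction).

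Honestly, there is no real obstacle here: once \eqref{6.4}, \eqref{6.6}, and Theorem \ref{thm105} are on the table, the proof is a two-line monotonicity argument plus exponentiation, exactly as the remark preceding Theorem \ref{thm105} anticipates ("The proof is now rather simple"). The only point requiring a word of care is the direction of the inequalities when passing through the sign change in $-S(x) - A_1(x)$ and through the (monotone increasing) exponential, and noting that all the auxiliary quantities $f(x)$ and $1.02/((x-1)\log x)$ are positive on the relevant range so that the two resulting bounds genuinely bracket $e^{-\gamma}/\log x$. I would therefore write the proof simply as: combine \eqref{6.4} with \eqref{6.6} and Theorem \ref{thm105}.
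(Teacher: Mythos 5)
Your proposal is correct and is essentially identical to the paper's own proof: both start from the identity \eqref{6.4}, bound $A_1(x)$ via Theorem \ref{thm105} and $S(x)$ via \eqref{6.6}, and exponentiate. You even handle the inequality directions more carefully than the paper's write-up, which (apparently by a slip) refers to the right-hand side of \eqref{6.6} where the lower bound $S(x) > -1.02/((x-1)\log x)$ is what is actually needed for the upper estimate.
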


\begin{proof}
First, we apply the left-hand side inequality of Theorem \ref{thm105} to \eqref{6.4} and see that
\begin{equation}
\prod_{p\leq x} \left( 1 - \frac{1}{p} \right) < \frac{e^{-\gamma}}{\log x} \exp( - S(x) + f(x)) \tag{6.7} \label{6.7}
\end{equation}
for every $x > 1,757,126,630,797$. Now it suffices to apply the right-hand side inequality of \eqref{6.6} to \eqref{6.7} and we get the required right-hand side inequality. One the other hand, we have $S(x) < 0$ by \eqref{6.6}. Applying this and the right-hand side inequality of Theorem \ref{thm105} to \eqref{6.4}, we arrive at the end of the proof.
% 
% To prove that the inequality \eqref{3.11} holds for every $x > 1$, we apply the left-hand side inequality of Proposition \ref{prop302} to \eqref{3.8} and see that
% \begin{displaymath}
% \prod_{p\leq x} \left( 1 - \frac{1}{p} \right) < \frac{e^{-\gamma}}{\log x} \exp \left( - S + \frac{0.024334}{3\log^3 x} \left( 1 + \frac{15}{4\log x} \right) \right)
% \end{displaymath}
% for every $x > 1$. Now it suffices to apply the left-hand side inequality of \eqref{3.9} to the last inequality.
% By \eqref{6.8}, we have $S < 0$. Applying this and the right-hand side inequality in Proposition \ref{prop601} to \eqref{6.7}, we arrive at the end of the proof.
% et \eqref{3.12} for every $x \geq 1,757,126,630,797$. A computer check for smaller values of $x$ completes the proof.
%\begin{displaymath}
%\sum_{p\leq x} \log \left( 1 - \frac{1}{p} \right) \geq - \log \log x - \gamma - S - \frac{1}{20\log^3x} - \frac{3}{16\log^4x}.
%\end{displaymath}
%By exponentiating both sides of the last inequality, we get
% \begin{displaymath}
% \prod_{p\leq x} \left( 1 - \frac{1}{p} \right) > \frac{e^{-\gamma}}{\log x} \exp \left( -\frac{0.024334}{3\log^3 x} \left( 1 + \frac{15}{4\log x} \right) \right).
% \end{displaymath}
% Now the fact that $S < 0$ implies the inequality \eqref{6.6} for every $x \geq ???$. 
% Analogously,  Finally, we apply the left-hand side inequality of \eqref{3.9} and we arrive at the end of the proof.
\end{proof}

\begin{rema}
Note that the positive integer $N_0 = 1,757,126,630,797$ in Proposition \ref{prop601} might not be the smallest positive integer $N$ so that the inequality given  holds for every $x \geq N$.
\end{rema}

\begin{rema}
Under the assumption that the Riemann hypothesis is true, Schoenfeld \cite[Corollary 3]{schoenfeld1976} found that the inequality
\begin{displaymath}
|A_2(x)| < \frac{3 \log x + 5}{8 \pi e^{\gamma}\sqrt{x} \log x}
\end{displaymath}
holds for every $x \geq 8$. This was slightly improved by Dusart \cite[Theorem 4.4]{dusart20182} in 2018.
\end{rema}

% \begin{rema}
% The error term in the asymptotic formula \eqref{6.9} was improved several times. The current best known error term is given by
% \begin{displaymath}
% \prod_{p \leq x} \left( 1 - \frac{1}{p} \right) = \frac{e^{-\gamma}}{\log x} + O \left( \exp \left( - c_1 \log^{3/5}x (\log \log x)^{-1/5} \right) \right),
% \end{displaymath}
% which follows from the work of Korobov \cite{korobov1958} and Vinogradov \cite{vinogradov1958}.
% \end{rema}

\begin{rema}
Rosser and Schoenfeld \cite[Theorem 23]{rosserschoenfeld1962} found that $A_2(x) > 0$ for every $0 < x \leq 10^8$ and stated \cite[p.\:73]{rosserschoenfeld1962} the question whether this inequality also hold for every $x > 10^8$. In \cite[Proposition 1]{robin1983}, Robin answered this by showing that the function $A_2(x)$ changes the sign infinitely often.
\end{rema}

% \begin{rema}
% If we invert the inequalities from Proposition \ref{prop601}, we get the corresponding inequalities for
% \begin{displaymath}
% \prod_{p \leq x} \frac{p}{p-1}.
% \end{displaymath}
% \end{rema}

Now we can use Proposition \ref{prop601} to derive the following effective estimates for
\begin{displaymath}
\prod_{p \leq x} \left( 1 + \frac{1}{p} \right),
\end{displaymath}
where $p$ runs over primes not exceeding $x$.

\begin{kor} \label{kor602}
For every $x \geq 1,757,126,630,797$, one has
\begin{displaymath}
\frac{6e^{\gamma}}{\pi^2} \exp \left( -f(x) - \frac{1.02}{(x-1)\log x} \right) \log x < \prod_{p \leq x} \left( 1 + \frac{1}{p} \right) < \frac{6e^{\gamma}}{\pi^2} \left( 1 + \frac{1}{x} \right) \exp(f(x))\log x, 
\end{displaymath}
where $f(x)$ denotes the right-hand side of \eqref{1.16}.
\end{kor}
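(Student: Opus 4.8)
The plan is to reduce the statement to Proposition~\ref{prop601} by means of the elementary factorisation
\[
\prod_{p \leq x} \left( 1 + \frac{1}{p} \right) = \prod_{p \leq x} \frac{1 - 1/p^2}{1 - 1/p} = \frac{\prod_{p \leq x}(1 - 1/p^2)}{\prod_{p \leq x}(1 - 1/p)} .
\]
Proposition~\ref{prop601} already bounds the denominator $\prod_{p \leq x}(1 - 1/p)$ from both sides, so the only genuinely new ingredient is a two-sided estimate for the numerator $\prod_{p \leq x}(1 - 1/p^2)$, which I would obtain by comparison with the complete Euler product $\prod_{p}(1 - 1/p^2)^{-1} = \zeta(2) = \pi^2/6$.

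Concretely, I would first write $\prod_{p \leq x}(1 - 1/p^2) = \frac{6}{\pi^2}\, T(x)$, where $T(x) := \prod_{p > x}(1 - 1/p^2)^{-1} = \prod_{p > x}\bigl( 1 + \tfrac{1}{p^2 - 1} \bigr)$, and then prove that
\[
1 < T(x) < 1 + \frac{1}{x} \qquad (x \geq 1,757,126,630,797) .
\]
The left-hand inequality is immediate, since each factor of $T(x)$ exceeds $1$. For the right-hand inequality I would apply $1 + t \leq e^t$ to obtain $T(x) \leq \exp\!\bigl( \sum_{p > x} \tfrac{1}{p^2 - 1} \bigr)$ and then estimate the tail sum crudely: since every prime $p > x$ is odd,
\[
\sum_{p > x} \frac{1}{p^2 - 1} \leq \sum_{n > x,\ n\text{ odd}} \frac{1}{n^2 - 1} = \frac{1}{4} \sum_{m > (x-1)/2} \left( \frac{1}{m} - \frac{1}{m+1} \right) < \frac{1}{2(x-1)} .
\]
Since $\tfrac{1}{2(x-1)} < \tfrac{1}{x+1} < \log\bigl( 1 + \tfrac{1}{x} \bigr)$ for $x > 3$ (the last inequality from $\log(1+t) > t/(1+t)$), this yields $T(x) < 1 + 1/x$ throughout the stated range.

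Finally I would assemble the pieces. Reading Proposition~\ref{prop601} after taking reciprocals, its lower bound for $\prod_{p \leq x}(1 - 1/p)$ becomes $1/\prod_{p \leq x}(1 - 1/p) < e^{\gamma} \log x \, \exp(f(x))$ and its upper bound becomes $1/\prod_{p \leq x}(1 - 1/p) > e^{\gamma} \log x \, \exp\!\bigl( -f(x) - \tfrac{1.02}{(x-1)\log x} \bigr)$, where $f(x)$ denotes the right-hand side of \eqref{1.16}. Substituting the bound $T(x) < 1 + 1/x$ together with the first of these into
\[
\prod_{p \leq x} \left( 1 + \frac{1}{p} \right) = \frac{6}{\pi^2}\, T(x) \cdot \frac{1}{\prod_{p \leq x}(1 - 1/p)}
\]
gives the claimed upper bound, and substituting $T(x) > 1$ together with the second gives the claimed lower bound. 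The only step beyond pure bookkeeping is the tail estimate $T(x) < 1 + 1/x$, and that is itself entirely elementary; I expect the only thing requiring care to be keeping straight the direction of each inequality taken from Proposition~\ref{prop601}.
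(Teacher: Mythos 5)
Your proposal is correct and follows essentially the same route as the paper: both rest on the factorisation $1+1/p=(1-1/p^2)/(1-1/p)$ combined with Proposition~\ref{prop601}. The only difference is that where the paper cites Dusart's Lemma~4.3 for the bounds $\tfrac{6}{\pi^2}<\prod_{p\le x}(1-1/p^2)^{-1}\cdot\tfrac{6}{\pi^2}\cdot\prod_{p\le x}(1-1/p^2)\le\tfrac{6}{\pi^2}(1+\tfrac1x)$ on the truncated Euler product, you supply a short self-contained telescoping estimate of the tail $\sum_{p>x}1/(p^2-1)$, which checks out.
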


\begin{proof}
Since $1+1/p = (1-1/p^2)/(1-1/p)$, we can use Proposition \ref{prop601} and \cite[Lemma 4.3]{dusart20182} to get that
\begin{displaymath}
\frac{e^{\gamma}}{\zeta(2)}  \exp \left(-f(x) - \frac{1.02}{(x-1)\log x} \right)\log x < \prod_{p \leq x} \left( 1 + \frac{1}{p} \right) < \frac{e^{\gamma}}{\zeta(2)} \left( 1 + \frac{1}{x} \right) \exp(f(x))\log x
\end{displaymath}
for every $x \geq 1,757,126,630,797$. Finally, it suffices to apply the well known identity $\zeta(2) = \pi^2/6$.
\end{proof}

\begin{rema}
Note that the positive integer $N_0 = 1,757,126,630,797$ might not be the smallest positive integer $N$ so that the inequality given in Corollary \ref{kor602} holds for every $x \geq N$.
\end{rema}

Let us briefly study $S(x)$, defined as in \eqref{6.5}, in more detail. In the proof of the left-hand side inequality in \eqref{6.6}, Rosser and Schoenfeld used the inequality $\vartheta(x) < 1.02x$ which is valid for every $x > 0$ (see \cite[Theorem 9]{rosserschoenfeld1962}). If we use approximations for $\vartheta(x)$ of the form \eqref{1.5}, we get the following result.

\begin{prop} \label{prop603}
Let $k$ be a positive integer and let $\eta_k$ and $x_0 = x_0(k)$ be positive real numbers with $x_0 > 1$ so that $|\vartheta(x)-x| <\eta_kx/\log^kx$ for every $x \geq x_0$. Then, we have
\begin{displaymath}
\left| S(x) - \sum_{n=1}^\infty \frac{\emph{li}(x^{-n})}{n+1} \right| < \frac{\eta_k}{\log^{k+1}x} \left( (x+1) \log \left( \frac{x}{x-1} \right) - 1 \right)
\end{displaymath}
for every $x \geq x_0$.
\end{prop}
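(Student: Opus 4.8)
The plan is to follow the method of Rosser and Schoenfeld that produced \eqref{6.1}: write the difference as a Riemann--Stieltjes integral against $d(\vartheta(t)-t)$, integrate by parts, and reduce the estimate to two elementary integrals.

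Put $h(t)=\log(1-1/t)+1/t$ for $t>1$. By \eqref{6.5}, $S(x)=\sum_{p>x}h(p)=\int_{x^{+}}^{\infty}\frac{h(t)}{\log t}\,d\vartheta(t)$. The substitution $u=t^{1-m}$ gives $\int_{x}^{\infty}\frac{t^{-m}}{\log t}\,dt=-\text{li}(x^{1-m})$ for every integer $m\ge2$, and since $h(t)=-\sum_{m\ge2}t^{-m}/m$ with all terms of one sign, interchanging (Tonelli) yields $\int_{x}^{\infty}\frac{h(t)}{\log t}\,dt=\sum_{n\ge1}\frac{\text{li}(x^{-n})}{n+1}$. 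Writing $E(t)=\vartheta(t)-t$ and $G(t)=h(t)/\log t$, it follows that
\[
S(x)-\sum_{n\ge1}\frac{\text{li}(x^{-n})}{n+1}=\int_{x^{+}}^{\infty}G(t)\,dE(t).
\]
Since $h(t)=O(t^{-2})$ and $|E(t)|<\eta_{k}t/\log^{k}t$, we have $G(t)E(t)\to0$, so integration by parts is legitimate and gives $\int_{x^{+}}^{\infty}G\,dE=-G(x)E(x)-\int_{x}^{\infty}G'(t)E(t)\,dt$; here $h'(t)=1/(t^{2}(t-1))$, so $G'(t)=\frac{1}{t^{2}(t-1)\log t}-\frac{h(t)}{t\log^{2}t}>0$ for $t>1$ because $h(t)<0$.

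The decisive move is to use $G(\infty)=0$, that is $-G(x)=\int_{x}^{\infty}G'(t)\,dt$, to rewrite
\[
S(x)-\sum_{n\ge1}\frac{\text{li}(x^{-n})}{n+1}=\int_{x}^{\infty}G'(t)\left(E(x)-E(t)\right)\,dt;
\]
this makes the bound insensitive to additive constants in $E$ and is exactly what lets us reach the stated coefficient (estimating the boundary term and the integral separately overshoots it). Using $|E(x)-E(t)|\le|E(x)|+|E(t)|<\eta_{k}x/\log^{k}x+\eta_{k}t/\log^{k}t$ and $G'>0$,
\[
\left|S(x)-\sum_{n\ge1}\frac{\text{li}(x^{-n})}{n+1}\right|<\frac{\eta_{k}x}{\log^{k}x}\int_{x}^{\infty}G'(t)\,dt+\eta_{k}\int_{x}^{\infty}\frac{t}{\log^{k}t}\,G'(t)\,dt=\frac{-\eta_{k}x\,h(x)}{\log^{k+1}x}+\eta_{k}\int_{x}^{\infty}\frac{t}{\log^{k}t}\,G'(t)\,dt.
\]
For the last integral I integrate by parts once more with antiderivative $G$ of $G'$, using $\frac{d}{dt}\left(t\log^{-k}t\right)=(\log t-k)/\log^{k+1}t$, to obtain
\[
\int_{x}^{\infty}\frac{t}{\log^{k}t}\,G'(t)\,dt=\frac{-x\,h(x)}{\log^{k+1}x}+\int_{x}^{\infty}\frac{(-h(t))(\log t-k)}{\log^{k+2}t}\,dt\le\frac{-x\,h(x)}{\log^{k+1}x}+\frac{1}{\log^{k+1}x}\int_{x}^{\infty}(-h(t))\,dt,
\]
where the inequality holds because $\frac{\log t-k}{\log^{k+2}t}\le\frac{1}{\log^{k+1}t}\le\frac{1}{\log^{k+1}x}$ for $t\ge x$ and $-h(t)>0$.

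Finally, directly from the definition of $h$ we have $-x\,h(x)=x\log\frac{x}{x-1}-1$, and since $-(t-1)\log(1-1/t)$ is an antiderivative of $-h$ tending to $1$ as $t\to\infty$ we get $\int_{x}^{\infty}(-h(t))\,dt=1-(x-1)\log\frac{x}{x-1}$. Substituting, the whole bound collapses to
\[
\frac{\eta_{k}}{\log^{k+1}x}\left(-2x\,h(x)+\int_{x}^{\infty}(-h(t))\,dt\right)=\frac{\eta_{k}}{\log^{k+1}x}\left(2x\log\frac{x}{x-1}-2+1-(x-1)\log\frac{x}{x-1}\right)=\frac{\eta_{k}}{\log^{k+1}x}\left((x+1)\log\frac{x}{x-1}-1\right),
\]
which is the assertion; the inequality is strict since the hypothesis $|\vartheta(t)-t|<\eta_{k}t/\log^{k}t$ is strict and $G'>0$ throughout $(x,\infty)$. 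The only points needing care are the two integrations by parts (convergence at infinity, and the observation that in the boundary term $E(x^{+})=\vartheta(x)-x$ by right-continuity of $\vartheta$, so the formula is valid even when $x$ is prime) and the sum--integral interchange; the real obstacle is noticing the reformulation $\int_{x}^{\infty}G'(t)(E(x)-E(t))\,dt$, since a cruder estimate does not give the precise constant $(x+1)\log\frac{x}{x-1}-1$.
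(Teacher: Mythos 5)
Your proof is correct, and it reaches the stated bound by a genuinely different organization of what is at bottom the same computation. The paper fixes $n\ge 2$, invokes the Rosser--Schoenfeld identity \eqref{6.8} for $\sum_{p>x}p^{-n}$, bounds each such tail by $\frac{\eta_k}{x^{n-1}\log^{k+1}x}\left(1+\frac{n}{n-1}\right)$ (this is Lemma \ref{lem604}), and only then sums over $n$ with the weights $1/n$ from \eqref{6.5}, evaluating $\sum_{n\ge 2}\left(1+\frac{n}{n-1}\right)\frac{1}{nx^{n-1}}=(x+1)\log\frac{x}{x-1}-1$ in closed form. You sum over the powers first, collapsing them into $h(t)=\log(1-1/t)+1/t$, write $S(x)$ as a single Stieltjes integral against $\mathrm{d}\vartheta$, integrate by parts once, and evaluate the two resulting elementary integrals ($-xh(x)$ and $\int_x^\infty(-h)$) exactly; all of your closed-form evaluations and convergence claims check out, and the two routes produce the identical constant, which is a good consistency check. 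What your version buys is independence from \eqref{6.8}: the single integration by parts replaces it, and your estimate $\frac{\log t-k}{\log^{k+2}t}\le\frac{1}{\log^{k+1}x}$ replaces the analogue of Rosser--Schoenfeld's Lemma 9; the price is that you must justify the Stieltjes integration by parts and the vanishing boundary terms at infinity yourself, which you do. One quibble with your commentary rather than your proof: the step you call decisive, rewriting the difference as $\int_x^\infty G'(t)\left(E(x)-E(t)\right)\mathrm{d}t$, gains nothing once you apply $|E(x)-E(t)|\le|E(x)|+|E(t)|$, since that triangle inequality reproduces exactly the ``crude'' bound $|G(x)E(x)|+\int_x^\infty G'|E|$ obtained by estimating the boundary term and the integral separately; the precise constant comes from evaluating the elementary integrals exactly, not from that reformulation.
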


In order to prove this proposition, we first establish the following lemma.

\begin{lem} \label{lem604}
Let $n$ be a positive integer with $n\geq 2$. Under the assumptions of Proposition \ref{prop603}, we have
\begin{displaymath}
\left| \emph{li}(x^{1-n}) + \sum_{p > x} \frac{1}{p^n} \right| < \frac{\eta_k}{x^{n-1}\log^{k+1}x} \left( 1 + \frac{n}{n-1} \right)
\end{displaymath}
for every $x \geq x_0$.
\end{lem}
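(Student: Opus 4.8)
The plan is to express the tail sum $\sum_{p>x}1/p^n$ as a Stieltjes integral against $\vartheta$ and then compare it with the corresponding integral where $\vartheta(t)$ is replaced by $t$, which will produce exactly $-\tl(x^{1-n})$ up to a controlled error. First I would recall the Abel-summation identity
\begin{displaymath}
\sum_{p>x} \frac{1}{p^n} = \int_x^\infty \frac{d\vartheta(t)}{t^n \log t},
\end{displaymath}
valid since each prime $p$ contributes a jump of size $\log p$ to $\vartheta$. Splitting $d\vartheta(t) = dt + d(\vartheta(t)-t)$ and integrating the second piece by parts, one gets a main term and an error term:
\begin{displaymath}
\sum_{p>x} \frac{1}{p^n} = \int_x^\infty \frac{dt}{t^n\log t} - \frac{\vartheta(x)-x}{x^n\log x} + \int_x^\infty (\vartheta(t)-t)\,\frac{d}{dt}\!\left(\frac{1}{t^n\log t}\right)dt.
\end{displaymath}
The first integral is, after the substitution $u = t^{1-n}$ (so $\log t = \frac{1}{1-n}\log u$ and $du = (1-n)t^{-n}\,dt$), precisely $-\tl(x^{1-n})$; this identifies the main term and explains the sign in the statement.

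Next I would bound the two error contributions using the hypothesis $|\vartheta(t)-t| < \eta_k t/\log^k t$ for $t \geq x_0$. The boundary term is immediately at most $\eta_k/(x^{n-1}\log^{k+1}x)$. For the remaining integral, note $\frac{d}{dt}\big(t^{-n}(\log t)^{-1}\big) = -t^{-n-1}(\log t)^{-1}\big(n + 1/\log t\big)$, so in absolute value the integrand is at most
\begin{displaymath}
\frac{\eta_k}{t^{n+1}\log^{k+1}t}\left(n + \frac{1}{\log t}\right).
\end{displaymath}
Bounding $(\log t)^{-(k+1)}\big(n+(\log t)^{-1}\big) \leq (\log x)^{-(k+1)}\big(n+(\log x)^{-1}\big)$ for $t\geq x$ (valid once $\log x\geq 1$, which holds since we will only need $x$ large; for small $x$ the lemma is used only as an auxiliary inside Proposition \ref{prop603} where $x\geq x_0>1$, and one can absorb the mild loss) and then computing $\int_x^\infty t^{-n-1}\,dt = 1/(n x^n)$ gives a bound of roughly $\eta_k/(x^n \log^{k+1}x) \cdot (n + \cdots)/n$. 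Combining with the boundary term and collecting factors of $x$ yields the claimed bound $\frac{\eta_k}{x^{n-1}\log^{k+1}x}\big(1+\frac{n}{n-1}\big)$, where the factor $\frac{n}{n-1}$ comes from the coarser estimate $\frac{1}{t^{n+1}} \le \frac{1}{x^2}\cdot\frac{1}{t^{n-1}}$ together with $\int_x^\infty t^{-2}\,dt = 1/x$, i.e. one keeps $x^{1-n}$ outside and integrates $t^{-2}$; this replaces the awkward $1/n$ by the cleaner $\frac{n}{n-1}$ after using $n \le \frac{n}{n-1}\cdot 1$ suitably. I would double-check the exact arithmetic so that the constant matches $1 + \frac{n}{n-1}$ on the nose rather than something slightly larger.

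The main obstacle I anticipate is purely bookkeeping: getting the error constant to come out as exactly $1 + \frac{n}{n-1}$ rather than a messier expression. The two sources of error (the boundary term, contributing the $1$, and the tail integral, contributing the $\frac{n}{n-1}$) must each be estimated with just the right coarseness — too sharp and one gets an ugly constant, too lossy and one overshoots. The natural choice is to pull $x^{1-n}$ out of the tail integral, estimate the leftover $(\log t)^{-(k+1)}$ and the factor $n + (\log t)^{-1}$ by their values at $t = x$, and bound $\int_x^\infty t^{-2}\,dt$; keeping track of whether the $n+(\log x)^{-1}$ factor gets absorbed into $\frac{n}{n-1}$ or needs $\log x$ large is the delicate point, and since the lemma is only invoked for $x \geq x_0 > 1$ inside Proposition \ref{prop603}, choosing $x_0$ large enough (e.g. $x_0 \geq e$) makes $(\log x)^{-1} \leq 1 \leq \frac{1}{n-1}\cdot(n-1)$ harmless. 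Everything else — the Abel summation, the substitution identifying $\tl(x^{1-n})$, the integration by parts — is routine.
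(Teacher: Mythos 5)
Your overall strategy is the same as the paper's: the Abel-summation identity you derive is precisely the Rosser--Schoenfeld identity (6.8) that the paper quotes, namely $\sum_{p>x}p^{-n} = -\vartheta(x)/(x^n\log x)+\int_x^\infty (1+n\log y)\vartheta(y)\,y^{-n-1}\log^{-2}y\,\mathrm{d}y$, and your identification of the main term $-\tl(x^{1-n})$ via the substitution $u=t^{1-n}$, as well as of the boundary term contributing the summand $1$, is correct. There are, however, two problems in your error analysis. First, a computational slip: since $|\vartheta(t)-t|<\eta_k t/\log^k t$ carries a factor $t$ in the numerator, the pointwise bound on the integrand is $\eta_k\left(n+1/\log t\right)/(t^{n}\log^{k+1}t)$, not $\eta_k\left(n+1/\log t\right)/(t^{n+1}\log^{k+1}t)$; your subsequent manipulations with $\int_x^\infty t^{-n-1}\,\mathrm{d}t$ and with $t^{-n-1}\le x^{-2}t^{-(n-1)}$ then chase the wrong power of $x$ and do not recover the required $x^{-(n-1)}$.

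Second, and more substantively, even after correcting the exponent, your proposed estimate (freeze $\log t$ at $\log x$ and integrate the remaining power of $t$) yields $\int_x^\infty \frac{n+1/\log t}{t^n\log^{k+1}t}\,\mathrm{d}t \le \frac{n+1/\log x}{(n-1)\,x^{n-1}\log^{k+1}x}$, so the final constant comes out as $1+\frac{n}{n-1}+\frac{1}{(n-1)\log x}$, strictly larger than the claimed $1+\frac{n}{n-1}$; you flag this risk yourself but do not resolve it. The missing idea is the comparison the paper uses (modelled on Lemma 9 of Rosser--Schoenfeld): since $-\frac{\mathrm{d}}{\mathrm{d}y}\bigl(\frac{1}{(n-1)y^{n-1}\log^{k+1}y}\bigr)=\frac{1}{y^n\log^{k+1}y}+\frac{k+1}{(n-1)y^n\log^{k+2}y}$ and $n(k+1)/(n-1)\ge 1$, the integrand $\frac{n}{y^n\log^{k+1}y}+\frac{1}{y^n\log^{k+2}y}$ is dominated by $-n$ times that derivative, which gives $\int_x^{\infty} \frac{1+n\log y}{y^n \log^{k+2} y} \, \mathrm{d}y \leq \frac{n}{(n-1)x^{n-1}\log^{k+1}x}$ exactly. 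With that substitution your argument closes; without it you do not obtain the lemma as stated, nor the precise constant $(x+1)\log(x/(x-1))-1$ in Proposition \ref{prop603} that it feeds.
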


\begin{proof}
By \cite[p.\:87]{rosserschoenfeld1962}, we have
\begin{equation}
\sum_{p > x} \frac{1}{p^n} = - \frac{\vartheta(x)}{x^n\log x} + \int_x^{\infty} \frac{(1+n\log y) \vartheta(y)}{y^{n+1} \log^2 y} \, \text{d}y. \tag{6.8} \label{6.8}
\end{equation}
Since we have assumed that $|\vartheta(x)-x| <\eta_kx/\log^kx$ for every $x \geq x_0$, we see that
\begin{equation}
\sum_{p > x} \frac{1}{p^n} \leq - \text{li}(x^{1-n}) + \frac{\eta_k}{x^{n-1}\log^{k+1}x} + \eta_k \int_x^{\infty} \frac{1+n\log y}{y^n \log^{k+2} y} \, \text{d}y \tag{6.9} \label{6.9}
\end{equation}
for every $x \geq x_0$. Analogous to \cite[Lemma 9]{rosserschoenfeld1962}, we get that
\begin{displaymath}
\int_x^{\infty} \frac{1+n\log y}{y^n \log^{k+2} y} \, \text{d}y \leq \frac{n}{(n-1)x^{n-1}\log^{k+1}x}.
\end{displaymath}
Applying this inequality to \eqref{6.9}, we see that the required upper bound holds for every $x \geq x_0$. The proof of the required lower bound is quite similar and we leave the details to the reader.
\end{proof}

Now we can combine the definition \eqref{6.5} with Lemma \ref{lem604} to get the following proof of Proposition \ref{prop603}.

\begin{proof}[Proof of Proposition \ref{prop603}]
If we apply Lemma \ref{lem604} to \eqref{6.5}, it turns out that
\begin{displaymath}
-\frac{\eta_k}{\log^{k+1}x} \sum_{n=2}^\infty \left( 1 + \frac{n}{n-1} \right) \frac{1}{nx^{n-1}} < S(x) - \sum_{n=1}^\infty \frac{\text{li}(x^{-n})}{n+1} <  \frac{\eta_k}{\log^{k+1}x} \sum_{n=2}^\infty \left( 1 + \frac{n}{n-1} \right) \frac{1}{nx^{n-1}}
\end{displaymath}
for every $x \geq x_0$. Now, it suffices to apply the identity
\begin{displaymath}
\sum_{n=2}^\infty \left( 1 + \frac{n}{n-1} \right) \frac{1}{nx^{n-1}} = (x+1) \log \left( \frac{x}{x-1} \right) - 1
\end{displaymath}
to complete the proof.
% 
% 
% 
% We start with the proof of the required lower bound. Let $n$ be a positive integer with $n \geq 2$. By \cite[p.\: 87]{rosserschoenfeld1962}, we have
% \begin{displaymath}
% \sum_{p > x} \frac{1}{p^n} = - \frac{\vartheta(x)}{x^n\log x} + \int_x^{\infty} \frac{(1+n\log y) \vartheta(y)}{y^{n+1} \log^2 y} \, \text{d}y.
% \end{displaymath}
% Now we can apply Proposition \ref{prop101} and \cite[Lemma 9]{rosserschoenfeld1962} with $a=1$ to get that
% \begin{displaymath}
% \sum_{p > x} \frac{1}{p^n} < \frac{1}{(n-1)x^{n-1}\log x} + \frac{0.024334}{x^{n-1}\log^4x} + 0.024334 \int_x^{\infty} \frac{1+n\log y}{y^n \log^5 y} \, \text{d}y.
% \end{displaymath}
% Since
% \begin{displaymath}
% \int_x^{\infty} \frac{1+n\log y}{y^n \log^5 y} \, \text{d}y < \int_x^{\infty} \frac{4+n\log y}{y^n \log^5 y} \, \text{d}y = \frac{n}{(n-1)x^{n-1}\log^4x},
% \end{displaymath}
% we see that
% \begin{displaymath}
% \sum_{p > x} \frac{1}{p^n} < \frac{1}{(n-1)x^{n-1}\log x} + \frac{0.0486684n}{(n-1)x^{n-1}\log^4x}.
% \end{displaymath}
% Using the definition of $S$, we get
% \begin{displaymath}
% S > - \frac{1}{\log x} \sum_{n=2}^{\infty} \frac{1}{n(n-1)x^n} - \frac{0.024334}{\log^4x} \sum_{n=1}^{\infty} \frac{1}{nx^n}
% \end{displaymath}
\end{proof}

If we combine \eqref{2.12} and \eqref{6.8}, we find the following new necessary condition for the Riemann hypothesis including the sum in Lemma \ref{lem604}.

% Under the assumption that the Riemann hypothesis is true, we find the following improved estimates for the sum in Lemma \ref{lem605}.

\begin{prop} \label{prop605}
Let $n$ be a positive integer with $n\geq 2$. Under the assumption that the Riemann hypothesis is true, we have
\begin{displaymath}
\left| \emph{li}(x^{1-n}) + \sum_{p > x} \frac{1}{p^n} \right| < \frac{1}{8\pi x^{n-1/2}} \left( 1 + \frac{2n}{2n-1} \right)\left( \log x + \frac{2}{2n-1} \right)
\end{displaymath}
for every $x \geq 599$.
\end{prop}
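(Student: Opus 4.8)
The plan is to follow the same route as the proof of Lemma~\ref{lem604}, but with the conditional bound \eqref{2.12} in place of the hypothesis $|\vartheta(x)-x|<\eta_kx/\log^kx$. I would start from the identity \eqref{6.8},
\[
\sum_{p>x}\frac{1}{p^n}=-\frac{\vartheta(x)}{x^n\log x}+\int_x^\infty\frac{(1+n\log y)\,\vartheta(y)}{y^{n+1}\log^2y}\,\text{d}y ,
\]
and split $\vartheta(y)=y+(\vartheta(y)-y)$. The contribution of the term $y$ equals $-\tl(x^{1-n})$: this is the main-term identity already used in the proof of Lemma~\ref{lem604}, and it is checked by differentiating $-1/(x^{n-1}\log x)+\int_x^\infty(1+n\log y)/(y^n\log^2y)\,\text{d}y$, which has derivative $-1/(x^n\log x)=\tfrac{\text{d}}{\text{d}x}\tl(x^{1-n})$, both functions vanishing as $x\to\infty$. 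Hence
\[
\tl(x^{1-n})+\sum_{p>x}\frac{1}{p^n}=-\frac{\vartheta(x)-x}{x^n\log x}+\int_x^\infty\frac{(1+n\log y)\,(\vartheta(y)-y)}{y^{n+1}\log^2y}\,\text{d}y .
\]

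Next I would estimate the two terms on the right using \eqref{2.12}, that is $|\vartheta(y)-y|<\sqrt{y}\,\log^2y/(8\pi)$, which is available on the whole range of integration because $x\ge599$. The boundary term is bounded by $\sqrt{x}\,\log^2x/(8\pi x^n\log x)=\log x/(8\pi x^{n-1/2})$, while in the integral the factor $\log^2y$ cancels, leaving the elementary integral $\tfrac1{8\pi}\int_x^\infty(1+n\log y)y^{-n-1/2}\,\text{d}y$. Here $\int_x^\infty y^{-n-1/2}\,\text{d}y=\tfrac{2}{2n-1}x^{-(n-1/2)}$, and a single integration by parts gives $\int_x^\infty n\log y\cdot y^{-n-1/2}\,\text{d}y=\tfrac{2n}{2n-1}x^{-(n-1/2)}\bigl(\log x+\tfrac{2}{2n-1}\bigr)$, so the whole integral equals $x^{-(n-1/2)}\bigl[\tfrac{2}{2n-1}+\tfrac{2n}{2n-1}\bigl(\log x+\tfrac{2}{2n-1}\bigr)\bigr]$.

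Adding the bound on the boundary term to $\tfrac{1}{8\pi}$ times this value and applying the triangle inequality then yields
\[
\Bigl|\tl(x^{1-n})+\sum_{p>x}\tfrac1{p^n}\Bigr|<\frac{1}{8\pi x^{n-1/2}}\Bigl(1+\frac{2n}{2n-1}\Bigr)\Bigl(\log x+\frac{2}{2n-1}\Bigr),
\]
with strictness inherited from \eqref{2.12}; since \eqref{2.12} holds for $x\ge599$, so does this, which is exactly the assertion. I expect no genuine obstacle. The only step needing a little care is the final regrouping: one has to notice that the $\log x$ coming from the boundary term, the $\tfrac{2}{2n-1}$ coming from $\int y^{-n-1/2}$, and the output of the integration by parts assemble into the single product $\bigl(1+\tfrac{2n}{2n-1}\bigr)\bigl(\log x+\tfrac{2}{2n-1}\bigr)$; and one should recall why the $y$-part of \eqref{6.8} collapses to $-\tl(x^{1-n})$, which is the same computation underlying Lemma~\ref{lem604}.
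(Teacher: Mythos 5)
Your proposal is correct and is essentially identical to the paper's proof, which consists of the single remark that one reruns the proof of Lemma~\ref{lem604} with \eqref{2.12} in place of the hypothesis \eqref{1.5}; your exact evaluation of $\int_x^\infty(1+n\log y)\,y^{-n-1/2}\,\text{d}y$ is precisely what produces the constants $\bigl(1+\tfrac{2n}{2n-1}\bigr)$ and $\tfrac{2}{2n-1}$ in the statement. The only blemish is a sign slip in your parenthetical check of the main-term identity: one has $\tfrac{\text{d}}{\text{d}x}\,\text{li}(x^{1-n})=+1/(x^n\log x)$, so the function you differentiate (whose derivative is $-1/(x^n\log x)$) equals $-\text{li}(x^{1-n})$, which is what you in fact use in the displayed identity and in the rest of the argument.
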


\begin{proof}
Instead of the assumption \eqref{1.5}, we now use \eqref{2.12} in the proof of Lemma \ref{lem604}.
\end{proof}

% \begin{rema}
% Assume that the Riemann hypothesis is true. Using \eqref{}, Proposition \ref{} implies that
% \begin{displaymath}
% \left| S(x) - \sum_{n=1}^\infty \frac{\emph{li}(x^{-n})}{n+1} \right| < \frac{\log x}{8\pi\sqrt{x}(x-1)} + \frac{(\tanh^{-1}(1/\sqrt{x}) - 1/\sqrt{x})(1+\log x)}{2\pi} - \frac{\leftidx{_2}F_1(2;5/2;7/2;1/x)}{20\pi x^{5/2}} - \frac{\Phi(1/x;2;3/2)}{16\pi x^{3/2}}
% \end{displaymath}
% for every $x \geq 599$.
% \end{rema}

% Inside the restricted interval $(???, 1,757,126,630,797]$, we obtain a slightly improved lower bound for the integral in \eqref{3.5}.
% 
% \begin{rema}
% For every $x$ satisfying $??? x \leq 1,757,126,630,797$, we have
% \begin{displaymath}
% \int_2^x \frac{\pi(t) - \emph{li}(t)}{t^2} \, \emph{d}t > \frac{\emph{li}(x) - \pi(x)}{x} + C.
% \end{displaymath}
% \end{rema}

\section{Proof of Theorem \ref{thm106}}

% Setting

% Rosser and Schoenfeld \cite[Theorem 21]{rosserschoenfeld1962} found that $A_2(x) > 0$ for every $0 < x \leq 10^8$. Again, they asked whether this inequality also holds for every $x > 10^8$. Robin \cite[Proposition 1]{robin1983} showed that the function $A_2(x)$ changes the sign infinitely often, which leads again to a negative answer to the obove question. Unfortunately, until today no $x_0$ is known so that $A_2(x_0) < 0$. In the following proposition, we see that such an $x_0$ satisfies $x_0 > 1,757,126,630,797$.
% 
% \begin{prop} \label{prop303}
% For every $x$ with $1 < x \leq 1,757,126,630,797$, we have $A_2(x) > 0$.
% \end{prop}
% 
% \begin{proof}
% Similar to the proof of Proposition \ref{prop301}.
% \end{proof}
% 

Here we give the following proof of Theorem \ref{thm106}.

\begin{proof}[Proof of Theorem \ref{thm106}]
Let the constant $E$ be defined as in \eqref{1.18} and let 
\begin{equation}
A_3(x) = \sum_{p \leq x} \frac{\log p}{p} - \log x - E. \tag{7.1} \label{7.1}
\end{equation}
By Rosser and Schoenfeld \cite[p.\:74]{rosserschoenfeld1962}, we have
% connected the sum in \eqref{1.17} with Chebyshev's $\vartheta$-function by showing
\begin{equation}
A_3(x) =  \frac{\vartheta(x) - x}{x} - \int_x^{\infty} \frac{\vartheta(y) - y}{y^2} \, \text{d}y. \tag{7.2} \label{7.2}
\end{equation}
Similarly to the proof of Theorem \ref{thm105}, we may combine \eqref{7.2} and Proposition \ref{prop101} to conclude that the desired both inequalities hold for every $x \geq 1,757,126,630,797$.
% Similarly to the proof of Proposition \ref{prop302}, we obtain the desired inequalities for smaller values of $x$.
\end{proof}

\begin{rema}
Note that the positive integer $N_0 = 1,757,126,630,797$ in Theorem \ref{thm106} might not be the smallest positive integer $N$ so that the inequality given in Theorem \ref{thm106} holds for every $x \geq N$.
\end{rema}

\begin{rema}
Under the assumption that the Riemann hypothesis is true, Schoenfeld \cite[Corollary 2]{schoenfeld1976} found a better upper bound for $|A_3(x)|$. This result was later improved by Dusart \cite[Theorem 4.2]{dusart20182}.
\end{rema}

\begin{rema}
Rosser and Schoenfeld \cite[Theorem 21]{rosserschoenfeld1962} also found that $A_3(x) > 0$ for every $0 < x \leq 10^8$. Again, they asked whether this inequality also holds for every $x > 10^8$. Robin \cite[Proposition 1]{robin1983} showed that the function $A_3(x)$ changes the sign infinitely often, which leads again to a negative answer to the above question. Unfortunately, until today no $x_0$ is known so that $A_3(x_0) < 0$.
\end{rema}

\section{Appendix}

In this section we use Corollary \ref{kor404} and Walisch's \textsl{primecount} program \cite{walisch} to note more weaker upper bounds for the prime counting function $\pi(x)$ of the form \eqref{4.5},
% , namely
% \begin{displaymath}
% \pi(x) < \frac{x}{\log x - a_0 - \frac{a_1}{\log x} - \cdots - \frac{a_m}{\log^mx}} \q\q (x \geq x_m), 
% \end{displaymath}
where $m$ is an integer with $0 \leq m \leq 2$ and $a_0, \ldots, a_m$ are suitable positive real numbers. We start with the case where $m=0$.
% Here we note the following
% , namely
% \begin{displaymath}
% \pi(x) \leq \frac{x}{\log x - 1.11} \q\q (x \geq 60,184).
% \end{displaymath}
% In 1798, Legendre claimed that
% \begin{displaymath}
% \pi(x) = \frac{x}{A \log x + B}
% \end{displaymath}
% for some constants $A$ and $B$. On the basis of his study of a table of primes, Legendre stated in 1808 that
% \begin{displaymath}
% \pi(x) = \frac{x}{\log x + A(x)},
% \end{displaymath}
% where $\lim_{x \to \infty} A(x) = 1.08366$.
% The firstpublishedstatementwhichcame close to the primenumbertheorem
% wasdue to Legendrein 1798[8]. He assertedthat7r(x)is oftheformx /(Alog x + B)
% forconstantsA and B. On thebasis of numericalwork,Legendrerefinedhis con-
% jecturein 1808[9] byassertingthat
% = logx + A(x)'
% where A(x) is "approximately1.08366...". Presumably,by thislatterstatement,
% Legendremeantthat
% lim A(x) = 1.08366
% In 1808 Legendre's conjectured limit for logx−x/π(x) as x→+∞. 

\begin{kor} \label{kor801}
One has
\begin{displaymath}
\pi(x) < \frac{x}{\log x - a_0}
\end{displaymath}
for every $x \geq x_0$, where
\begin{center}
\begin{tabular}{|l||c|c|c|c|c|}
\hline
$a_0$\rule{0mm}{4mm} & $1.0344$ & $1.0345$ & $1.0346$ & $1.0347$ \\ \hline
$x_0$\rule{0mm}{4mm} & $98,011,218,006,714$ & $90,093,726,828,053$ & $82,972,765,680,514$ & $76,292,362,570,940$\\ \hline
\hline
$a_0$\rule{0mm}{4mm} & $1.0348$ & $1.0349$ & $1.035$ & $1.036$ \\ \hline
$x_0$\rule{0mm}{4mm} & $70,363,470,737,452$ & $64,716,191,738,353$ & $59,667,044,596,151$ & $27,086,141,056,455$ \\ \hline
\hline
$a_0$\rule{0mm}{4mm} & $1.037$ & $1.038$ & $1.039$ & $1.04$ \\ \hline
$x_0$\rule{0mm}{4mm} & $12,806,615,320,917$ & $6,317,261,904,937$ & $3,231,501,496,562$ & $1,697,021,254,855$ \\ \hline
\hline
$a_0$\rule{0mm}{4mm} & $1.041$ & $1.042$ & $1.043$ & $1.044$ \\ \hline
$x_0$\rule{0mm}{4mm} & $924,640,658,874$ & $519,205,451,664$ & $296,735,291,225$ & $175,758,684,156$ \\ \hline
\hline
$a_0$\rule{0mm}{4mm} & $1.045$ & $1.046$ & $1.047$ & $1.048$ \\ \hline
$x_0$\rule{0mm}{4mm} & $105,640,136,371$ & $65,431,161,562$ & $41,022,022,044$ & $25,724,702,310$ \\ \hline
\hline
$a_0$\rule{0mm}{4mm} & $1.049$ & $1.05$ & $1.051$ & $1.052$ \\ \hline
$x_0$\rule{0mm}{4mm} & $17,231,171,472$ & $11,207,440,881$ & $7,538,561,672$ & $5,047,295,951$ \\ \hline
\hline
$a_0$\rule{0mm}{4mm} & $1.053$ & $1.054$ & $1.055$ & $1.056$ \\ \hline
$x_0$\rule{0mm}{4mm} & $3,745,835,388$ & $2,605,443,747$ & $1,810,796,757$ & $1,220,594,340$ \\ \hline
\hline
$a_0$\rule{0mm}{4mm} & $1.057$ & $1.058$ & $1.059$ & $1.06$ \\ \hline
$x_0$\rule{0mm}{4mm} & $876,542,559$ & $673,828,570$ & $501,155,566$ & $383,446,375$ \\ \hline
\hline
$a_0$\rule{0mm}{4mm} & $1.061$ & $1.062$ & $1.063$ & $1.064$ \\ \hline
$x_0$\rule{0mm}{4mm} & $269,585,283$ & $196,894,353$ & $180,220,137$ & $116,749,925$ \\ \hline
\hline
$a_0$\rule{0mm}{4mm} & $1.065$ & $1.066$ & $1.067$ & $1.068$ \\ \hline
$x_0$\rule{0mm}{4mm} & $110,166,540$ & $76,223,058$ & $53,431,171$ & $46,097,944$ \\ \hline
\hline
$a_0$\rule{0mm}{4mm} & $1.069$ & $1.07$ & $1.071$ & $1.072$ \\ \hline
$x_0$\rule{0mm}{4mm} & $39,706,453$ & $31,027,247$ & $22,078,017$ & $18,339,738$ \\ \hline
\hline
$a_0$\rule{0mm}{4mm} & $1.073$ & $1.074$ & $1.075$ & $1.076$ \\ \hline
$x_0$\rule{0mm}{4mm} & $13,026,859$ & $12,895,928$ & $8,832,927$ & $7,299,254$ \\ \hline
\hline
$a_0$\rule{0mm}{4mm} & $1.077$ & $1.078$ & $1.079$ & $1.08$ \\ \hline
$x_0$\rule{0mm}{4mm} & $7,117,256$ & $5,465,656$ & $4,994,010$ & $3,462,478$ \\ \hline
\hline
$a_0$\rule{0mm}{4mm} & $1.081$ & $1.082$ & $1.083$ & $1.08366$ \\ \hline
$x_0$\rule{0mm}{4mm} & $3,455,648$ & $2,279,177$ & $1,529,630$ & $1,526,671$ \\ \hline
\hline
$a_0$\rule{0mm}{4mm} & $1.084$ & $1.085$ & $1.086$ & $1.087$ \\ \hline
$x_0$\rule{0mm}{4mm} & $1,525,432$ & $1,515,074$ & $1,200,014$ & $1,195,296$ \\ \hline
\hline
$a_0$\rule{0mm}{4mm} & $1.088$ & $1.089$ & $1.09$ & $1.091$ \\ \hline
$x_0$\rule{0mm}{4mm} & $624,878$ & $618,726$ & $618,058$ & $445,112$ \\ \hline
\hline
$a_0$\rule{0mm}{4mm} & $1.092$ & $1.093$ & $1.094$ & $1.095$ \\ \hline
$x_0$\rule{0mm}{4mm} & $359,804$ & $356203$ & $355,990$ & $355,177$ \\ \hline
\hline
$a_0$\rule{0mm}{4mm} & $1.096$ & $1.097$ & $1.098$ & $1.099$ \\ \hline
$x_0$\rule{0mm}{4mm} & $155,935$ & $155,907$ & $60,297$ & $60,224$ \\ \hline
\end{tabular} \ . \\
\end{center}
\end{kor}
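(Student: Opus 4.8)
The plan is to establish each row of the table by the same three-step argument already used for \eqref{4.6} in the proof of Corollary~\ref{kor404}; since the rows are all handled identically, I would describe it for one fixed admissible pair $(a_0,x_0)$. Set $f(x)=x/(\log x-a_0)$. Every $x$ occurring below is at least $60{,}224>e^{a_0+1}$ (recall $a_0\le 1.099$), so on all ranges in play $f$ is positive and strictly increasing — because $f'(x)=(\log x-a_0-1)/(\log x-a_0)^2>0$ for $x>e^{a_0+1}$ — and $f$ is also strictly increasing in the parameter $a_0$.

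First I would dispose of large $x$: Corollary~\ref{kor404} gives $\pi(x)<x/(\log x-1.0343)$ for every $x\ge 106{,}640{,}139{,}304{,}611$, and since each $a_0$ in the table exceeds $1.0343$,
\[
\pi(x)<\frac{x}{\log x-1.0343}\le\frac{x}{\log x-a_0}=f(x)\qquad(x\ge 106{,}640{,}139{,}304{,}611).
\]
Next I would bring in the integral logarithm. A short computation with $L=\log x$ shows that $f'(x)>\text{li}'(x)=1/L$ exactly when $L>a_0^2/(a_0-1)$; as $a_0>1$, this makes $f(x)-\text{li}(x)$ eventually positive and eventually increasing, so there is a threshold $x^\ast=x^\ast(a_0)$ (lying in the region $\log x>a_0^2/(a_0-1)$ where $f-\text{li}$ is increasing) with $f(x)\ge\text{li}(x)$ for every $x\ge x^\ast$. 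After locating $x^\ast$ numerically and checking $x^\ast\le 10^{19}$, Büthe's Lemma~\ref{lem402} gives $\pi(x)\le\text{li}(x)<f(x)$ for $x^\ast\le x\le 10^{19}$; together with the display above (and $106{,}640{,}139{,}304{,}611<10^{19}$) the asserted inequality then holds for all $x\ge x^\ast$. For the rows with larger $a_0$ one finds $x^\ast\le x_0$, and nothing further is needed.

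Otherwise only the window $x_0\le x<x^\ast$ survives (shortened to end at $106{,}640{,}139{,}304{,}611$ whenever $x^\ast$ exceeds it), and I would clear it with Walisch's \emph{primecount} program~\cite{walisch}, or equivalently a segmented sieve: since $f$ is strictly increasing and $\pi$ jumps only, by $1$, at primes, it suffices to partition the window into finitely many subintervals $[A_i,A_{i+1}]$ with $\pi(A_{i+1})<f(A_i)$, whereupon $\pi(x)\le\pi(A_{i+1})<f(A_i)\le f(x)$ on each piece. Each such test costs one evaluation of $\pi$ and one of $f$, and the admissible step $A_{i+1}-A_i$ is governed by the absolute slack $f(A_i)-\pi(A_i)$, which is smallest near $x_0$. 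Carrying this out for each of the roughly fifty rows completes the corollary.

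I expect the main obstacle to be precisely this last verification for the rows with small $a_0$, where $x_0$ and $x^\ast$ both sit near $10^{14}$ and the slack $f-\text{li}$ — hence $f-\pi$ — is minute on $[x_0,x^\ast)$, so one must either take a very fine partition (many prime counts at $x\sim 10^{14}$) or sieve over roughly $10^{11}$ to $10^{13}$ consecutive integers near $10^{14}$; this is substantial but remains a finite, feasible computation. The two accompanying points needing care are the numerical determination of $x^\ast(a_0)$ for every entry and the confirmation that $f-\text{li}$ does not slip back below zero beyond $x^\ast$, which is exactly what the monotonicity criterion $\log x^\ast>a_0^2/(a_0-1)$ guarantees.
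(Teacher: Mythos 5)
Your proposal is correct and follows essentially the same three-step route as the paper: the large-$x$ range comes from Corollary \ref{kor404} (the $1.0343$ bound, valid for $x\ge 106{,}640{,}139{,}304{,}611$, dominated by each $a_0>1.0343$ in the table), the middle range from comparing $x/(\log x-a_0)$ with $\mathrm{li}(x)$ and invoking Büthe's Lemma \ref{lem402}, and the window down to $x_0$ from a direct computation with \emph{primecount}. Your added monotonicity check ($f'>1/\log x$ once $\log x>a_0^2/(a_0-1)$) is a correct and useful justification of the crossing-point argument that the paper leaves implicit.
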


\begin{proof}
Corollary \ref{kor404} implies that the inequality
\begin{equation}
\pi(x) < \frac{x}{\log x - 1.0344} \tag{8.1} \label{8.1}
\end{equation}
holds for every $x \geq 106,640,139,304,611$. If we compare the right-hand side of \eqref{8.1} with the integral logarithm $\text{li}(x)$, we can use Lemma \ref{lem402} to see that the required inequality \eqref{8.1} also holds for every $x$ with $98,269,667,551,459 \leq x \leq 106,640,139,304,611$. We conclude by direct computation.
\end{proof}

\begin{rema}
The real number $a_0 = 1.08366$ in Corollary \ref{kor801} is mostly only of historical value. On the basis of his study of a limited table of primes, Legendre stated 1808 (see \cite[p.\:394]{legendre2}) that
\begin{displaymath}
\pi(x) = \frac{x}{\log x - A(x)},
\end{displaymath}
where $\lim_{x \to \infty} A(x) = 1.08366$. Clearly Legendre's conjecture is equivalent to \eqref{1.8}. However, from \eqref{1.11}, it follows that the best value of $\lim_{x \to \infty} A(x)$ is 1. At this point it should be mentioned that Panaitopol \cite{pana1} claimed to have proved the inequality
\begin{equation}
\pi(x) < \frac{x}{\log x - 1.08366} \tag{8.2} \label{8.2}
\end{equation}
for every $x > 10^6$. In Corollary \ref{kor801}, it could be shown that $N = 1,526,671$ is the smallest possible positive integer so that the inequality \eqref{8.2} holds for every $x \geq N$. 
\end{rema}

Next, we consider the case where $m=1$. Here we obtain the following effective estimates for $\pi(x)$.

\begin{kor} \label{kor802}
We have
\begin{displaymath}
\pi(x) < \frac{x}{\log x - 1 - \frac{a_1}{\log x}}
\end{displaymath}
for every $x \geq x_1$, where
\begin{center}
\begin{tabular}{|l||c|c|c|c|c|c|c|c|c|c|}
\hline
$a_1$\rule{0mm}{4mm} & $1.11$ & $1.111$ & $1.112$ & $1.113$\\ \hline
$x_1$\rule{0mm}{4mm} & $62,998,850,942,976$ & $49,246,036,992,716$ & $38,472,138,880,411$ & $30,658,643,813,468$\\ \hline
\hline
$a_1$\rule{0mm}{4mm} & $1.114$ & $1.115$ & $1.116$ & $1.117$\\ \hline
$x_1$\rule{0mm}{4mm} & $23,767,640,743,883$ & $19,278,513,358,342$ & $15,142,627,022,527$ & $12,279,648,138,508$ \\ \hline
\hline
$a_1$\rule{0mm}{4mm} & $1.118$ & $1.119$ & $1.12$ & $1.121$\\ \hline
$x_1$\rule{0mm}{4mm} & $9,684,114,630,824$ & $7,981,446,192,206$ & $6,323,967,140,812$ & $5,273,225,700,761$ \\ \hline
\hline
$a_1$\rule{0mm}{4mm} & $1.122$ & $1.123$ & $1.124$ & $1.125$\\ \hline
$x_1$\rule{0mm}{4mm} & $4,170,462,893,841$ & $3,458,549,136,539$ & $2,825,539,807,244$ & $2,292,448,124,593$ \\ \hline
\hline
$a_1$\rule{0mm}{4mm} & $1.126$ & $1.127$ & $1.128$ & $1.129$\\ \hline
$x_1$\rule{0mm}{4mm} & $1,903,596,231,542$ & $1,573,767,234,188$ & $1,290,096,268,844$ & $1,073,403,839,693$ \\ \hline
\hline
$a_1$\rule{0mm}{4mm} & $1.13$ & $1.131$ & $1.132$ & $1.133$\\ \hline
$x_1$\rule{0mm}{4mm} & $889,377,392,161$ & $782,989,678,664$ & $608,408,258,090$ & $540,050,850,157$ \\ \hline
\hline
$a_1$\rule{0mm}{4mm} & $1.134$ & $1.135$ & $1.136$ & $1.137$\\ \hline
$x_1$\rule{0mm}{4mm} & $452,875,824,702$ & $373,479,021,700$ & $335,562,521,091$ & $263,728,502,964$ \\ \hline
\hline
$a_1$\rule{0mm}{4mm} & $1.138$ & $1.139$ & $1.14$ & $1.141$\\ \hline
$x_1$\rule{0mm}{4mm} & $242,118,904,367$ & $201,924,836,111$ & $161,054,192,492$ & $149,061,190,565$ \\ \hline
\hline
$a_1$\rule{0mm}{4mm} & $1.142$ & $1.143$ & $1.144$ & $1.145$\\ \hline
$x_1$\rule{0mm}{4mm} & $125,233,112,846$ & $105,053,836,224$ & $86,061,321,374$ & $77,278,924,451$ \\ \hline
\hline
$a_1$\rule{0mm}{4mm} & $1.146$ & $1.147$ & $1.148$ & $1.149$\\ \hline
$x_1$\rule{0mm}{4mm} & $61,344,524,412$ & $57,720,831,343$ & $46,039,922,948$ & $42,575,222,481$ \\ \hline
\end{tabular} \ . \\
\end{center}
\end{kor}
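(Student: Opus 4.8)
The plan is to follow, for every row of the table, the same three-step scheme used in the proof of Corollary~\ref{kor801}: deduce the bound for all large $x$ from Corollary~\ref{kor404}, lower the threshold by a comparison with $\mathrm{li}(x)$ through Lemma~\ref{lem402}, and then settle the remaining bounded range with Walisch's \textsl{primecount} program.

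First I would invoke the second part of Corollary~\ref{kor404} in the case $a_0 = 1$, $a_1 = 1.109$, namely
\[
\pi(x) < \frac{x}{\log x - 1 - \frac{1.109}{\log x}} \qquad (x \geq 81{,}250{,}795{,}096{,}339).
\]
On this range the denominator $\log x - 1 - a_1/\log x$ is positive for every $a_1 \leq 1.149$ (its positivity amounts to $\log^2 x - \log x - a_1 > 0$, which already holds for $x \geq 6$), and the right-hand side is strictly increasing in the coefficient of $1/\log x$. Consequently the inequality $\pi(x) < x/(\log x - 1 - a_1/\log x)$ holds for every $x \geq 81{,}250{,}795{,}096{,}339$ and every $a_1 \geq 1.109$, in particular for all values $a_1 \in \{1.11, 1.111, \dots, 1.149\}$ occurring in the corollary.

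Next, fix one such $a_1$ and write $f(x)$ for the corresponding right-hand side. Expanding $f$ asymptotically gives $f(x) = x/\log x + x/\log^2 x + (1 + a_1)x/\log^3 x + \cdots$, and since $a_1 > 1$ the coefficient $1 + a_1$ exceeds the value $2$ that appears in the expansion of $\mathrm{li}(x)$. A routine comparison of the two functions therefore produces an explicit threshold $T = T(a_1)$, lying well below $81{,}250{,}795{,}096{,}339$ (hence below $10^{19}$), with $f(x) > \mathrm{li}(x)$ for every $x \geq T$. Lemma~\ref{lem402} then yields $\pi(x) \leq \mathrm{li}(x) < f(x)$ for every $x$ with $T \leq x \leq 81{,}250{,}795{,}096{,}339$, so the asserted bound is now established for all $x \geq T$.

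Finally, for the remaining interval $x_1 \leq x < T$, where $x_1$ is the entry recorded in the table, it suffices to verify $\pi(x) < f(x)$ directly. Because $f$ is increasing there and $\pi$ is constant on each interval $[p_n, p_{n+1})$, this reduces to checking the finitely many inequalities $n < f(p_n)$ for the primes $p_n$ in the range, which is precisely what \textsl{primecount} provides; an analogous check immediately below $x_1$ (compare the remark following Corollary~\ref{kor801} about Panaitopol's claim and the value $1.08366$) shows that the listed $x_1$ is optimal. The only genuinely laborious part of the argument is this last computational verification, carried out once for each of the many rows; everything above the computational range is mechanical given Corollary~\ref{kor404} and Lemma~\ref{lem402}, which is why the details may reasonably be left to the reader.
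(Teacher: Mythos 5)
Your proposal is correct and follows essentially the same route as the paper, which itself only says the proof is "quite similar to the proof of Corollary \ref{kor801}": deduce the bound for large $x$ from the entry $a_0=1$, $a_1=1.109$ of Corollary \ref{kor404} (using monotonicity in $a_1$), push the threshold down by comparing with $\mathrm{li}(x)$ via Lemma \ref{lem402}, and finish the bounded range with \textsl{primecount}. The only extraneous element is your closing claim that each listed $x_1$ is optimal, which the corollary does not assert, but this does not affect the validity of the argument.
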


\begin{proof}
The proof is quite similar to the proof of Corollary \ref{kor801} and we leave the details to the reader.
\end{proof}

Finally, we consider the case where $m=2$ and find the following explicit estimates for $\pi(x)$ .

\begin{kor} \label{kor803}
We have
\begin{displaymath}
\pi(x) < \frac{x}{\log x - 1 - \frac{1}{\log x} - \frac{a_2}{\log^2x}}
\end{displaymath}
for every $x \geq x_2$, where
\begin{center}
\begin{tabular}{|l||c|c|c|c|c|c|c|c|c|c|}
\hline
$a_2$\rule{0mm}{4mm} & $3.49$ & $3.5$ & $3.51$ & $3.52$ \\ \hline
$x_2$\rule{0mm}{4mm} & $83,027,761,686,134$ & $50,794,512,296,846$ & $30,594,003,254,258$ & $17,348,455,129,950$ \\ \hline
\hline
$a_2$\rule{0mm}{4mm} & $3.53$ & $3.54$ & $3.55$ & $3.56$ \\ \hline
$x_2$\rule{0mm}{4mm} & $11,655,963,556,138$ & $5,539,984,798,515$ & $4,489,052,430,063$ & $2,180,930,569,481$ \\ \hline
\hline
$a_2$\rule{0mm}{4mm} & $3.57$ & $3.58$ & $3.59$ & $3.6$ \\ \hline
$x_2$\rule{0mm}{4mm} & $1,464,200,206,021$ & $882,055,689,961$ & $584,256,118,105$ & $437,882,804,654$ \\ \hline
\hline
$a_2$\rule{0mm}{4mm} & $3.61$ & $3.62$ & $3.63$ & $3.64$ \\ \hline
$x_2$\rule{0mm}{4mm} & $332,203,763,508$ & $201,890,631,296$ & $148,632,348,138$ & $102,965,110,268$ \\ \hline
\hline
$a_2$\rule{0mm}{4mm} & $3.65$ & $3.66$ & $3.67$ & $3.68$ \\ \hline
$x_2$\rule{0mm}{4mm} & $55,102,251,180$ & $38,278,086,931$ & $24,178,954,639$ & $21,729,109,565$ \\ \hline
\end{tabular} \ .
\end{center}
\end{kor}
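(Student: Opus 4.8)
The plan is to run, for each of the twenty rows of the table, the argument already used for Corollaries \ref{kor801} and \ref{kor802}. Fix a pair $(a_2, x_2)$ and let $f(x)$ denote the claimed right-hand side. Since $x_2 > 10^{10}$, we have $\log x_2 > 23$, so the denominator $\log x - 1 - \frac{1}{\log x} - \frac{a_2}{\log^2 x}$ is positive for every $x \geq x_2$ and $f$ is a well-defined increasing function on $[x_2, \infty)$. Every tabulated $a_2$ exceeds $3.48$, so on this range $\log x - 1 - \frac{1}{\log x} - \frac{a_2}{\log^2 x} < \log x - 1 - \frac{1}{\log x} - \frac{3.48}{\log^2 x}$; taking reciprocals and combining with Corollary \ref{kor404} (the case $a_0 = 1$, $a_1 = 1$, $a_2 = 3.48$, valid for $x \geq 145,413,088,724,077$) gives $\pi(x) < f(x)$ for every $x \geq 145,413,088,724,077$.

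It remains to treat $x_2 \leq x < 145,413,088,724,077$, and here I would compare $f$ with the integral logarithm, exactly as in the proof of Corollary \ref{kor801}. A numerical comparison --- or, if one prefers a self-contained argument, a truncation of the asymptotic expansion of $\text{li}(x)$ together with an explicit bound for the remainder --- shows that $f(x) > \text{li}(x)$ for every $x \geq T$, with a threshold $T = T(a_2)$ that is not much larger than $x_2$. Because $145,413,088,724,077 < 10^{19}$, Lemma \ref{lem402} then yields $\pi(x) \leq \text{li}(x) < f(x)$ for every $x$ with $T \leq x < 145,413,088,724,077$. Finally, for the short remaining interval $x_2 \leq x < T$ one checks $\pi(x) < f(x)$ directly by means of Walisch's \textit{primecount} program \cite{walisch}, using that $f$ is increasing and that $\pi$ jumps only at primes, so that it suffices to compare the two functions on a sufficiently fine grid.

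I expect the middle step to be the main obstacle, and in particular the accurate location of $T(a_2)$. One computes $f(x) - \text{li}(x) = (a_2 - 3)\frac{x}{\log^4 x} + (2a_2 - 19)\frac{x}{\log^5 x} + \cdots$, and since $2a_2 - 19 < 0$ the lower-order terms work against the main term; for the smallest admissible value $a_2 = 3.49$ this pushes the crossover out to $\log x \approx 32$, essentially to the tabulated $x_2 \approx 8.3 \times 10^{13}$, whereas for the larger values of $a_2$ the main term wins much sooner and $T(a_2)$ drops rapidly. Thus for the rows with small $a_2$ the interval $[x_2, T]$ that is left for \textit{primecount} is the largest and its verification is the computational core of the proof; the reduction to Corollary \ref{kor404} and the use of Lemma \ref{lem402} are entirely routine. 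The argument must then be repeated for each of the twenty entries, and the precise values $x_2$ reflect exactly where $\pi(x)$ settles below $f(x)$.
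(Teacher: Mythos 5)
Your proposal is correct and follows exactly the route the paper takes: it reduces to the third entry of Corollary \ref{kor404} (the $a_2=3.48$ bound, valid for $x\geq 145{,}413{,}088{,}724{,}077$) via monotonicity in $a_2$, bridges the middle range by comparing with $\mathrm{li}(x)$ and invoking Lemma \ref{lem402}, and finishes the interval down to $x_2$ with a direct \textit{primecount} verification. The paper's proof is literally ``Similar to Corollary \ref{kor801}'', which is this same three-step argument.
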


\begin{proof}
Similar to Corollary \ref{kor801}.
\end{proof}

\section*{Acknowledgement}
I would like to express my great appreciation to Kim Walisch and Thomas Lessmann for the support in writing the C++ codes used in this paper. Furthermore I thank Samuel Broadbent, Habiba Kadiri, Allysa Lumley, Nathan Ng, and Kirsten Wilk, whose paper has motivated me to deal with the present topic again. Moreover, I would also like to thank the two beautiful souls R. and O. for the never ending inspiration.
% Finally, I thank the anonymous reviewer for the useful comments and suggestions to improve the quality of this paper. 

\end{document}